\newenvironment{proof}{\noindent \textbf{Proof.}}{\hfill $\square$}
\numberwithin{equation}{section}
\newcommand{\labeltarget}[1]{\Hy@raisedlink{\hypertarget{#1}{}}}
\newtheorem{theorem}{Theorem}[section]
\newtheorem{lemma}[theorem]{Lemma}
\newtheorem{proposition}[theorem]{Proposition}
\newtheorem{corollary}[theorem]{Corollary}
\newtheorem{remark}[theorem]{Remark}
\newtheorem{definition}[theorem]{Definition}
\title{Signed and sign-changing solutions for a class Kirchhoff-type problem involving the fractional p-Laplacian with critical Hardy nonlinearity  \thanks{{\bf Mathematics Subject Classification[2010]:} {Primary 35J60; Secondary 35J20, 47G20, and 35B33}}}
\author{
	R. F. Gabert\thanks{R. F. Gabert was supported 
		in part by  CAPES. Universidade Federal de S\~ao Carlos, Departamento de Matem\'atica, 
		CEP. 13565-905, S\~ao Carlos, SP - Brazil. E-mail:rodrigogabert@dm.ufscar.br}; \,
	{R.S. Rodrigues\thanks{Universidade Federal de S\~ao Carlos, Departamento de Matem\'atica, 
			CEP. 13565-905, S\~ao Carlos, SP - Brazil. E-mail: rodrigo@dm.ufscar.br}
		\thanks{Corresponding author.}}}
\date{}
\begin{document}
	\pretolerance10000 

	\maketitle
	
\begin{abstract}
In this paper, we study the existence of three solutions for a Kirchhoff equation involving the nonlocal fractional p-Laplacian considering Sobolev and Hardy nonlinearities at subcritical and critical growths. The proof is based on Mountain Pass Theorem, Ekeland's variational principle and constrained minimization in Nehari sets.
\end{abstract}

\section{Introduction}\label{Section1}

The proposal of this paper is to establish the existence of a positive solution, a negative one and a sign-changing one of following fractional Kirchhoff problem:

\begin{equation}
\tag{$ P_{\lambda} $}
\left \{
\begin{array}{ll}
M\left (\int _{\mathbb{R}^{2N}}\frac{|u(x)-u(y)|^{p}}{|x-y|^{N+ps}}dx dy\right )(-\Delta_{p})^s u=\lambda f(x,u)+\frac{|u|^{q-2}u}{|x|^{\alpha}} & \operatorname{in}\, \Omega, \\
u=0  &\operatorname{in}\,  \mathbb{R}^{N} \setminus \Omega,\\
\end{array}
\right.
\label{Equa1}
\end{equation}
where $ \Omega \subset \mathbb{R}^{N} $ is a smooth bounded domain containing $ 0 $, $ 0<s<1 $, $ 0\leq \alpha< ps<N $, $ \lambda >0 $ and $ 1< p<q\leq p_{\alpha}^{*} $ being $ p_{\alpha}^{*}:=\frac{(N-\alpha)p}{N-sp} $ the fractional critical Hardy-Sobolev exponent. Also, $ M:[0,\infty)\rightarrow [0,\infty) $ is a
continuous function and $ f:\Omega \times \mathbb{R}\rightarrow \mathbb{R}$ is a Carath\'eodory function whose assumptions over them will be introduced later and $ (-\Delta_{p})^{s} $ represents the factional p-Laplacian operator, which for $ u\in C_{c}^{\infty}(\mathbb{R}^{N}) $  is defined as
\begin{equation*}
(-\Delta_{p})^{s}u(x)=C(N,s)\lim_{\epsilon \rightarrow 0}\int_{\mathbb{R}^{N}\setminus B(x,\epsilon)}\frac{|u(x)-u(y)|^{p-2}(u(x)-u(y))}{|x-y|^{N+ps}}dx dy,\, x\in \mathbb{R}^{N},
\end{equation*} 
for $ x\in \mathbb{R}^{N} $, where $ B(x,\epsilon):=\{y\in \mathbb{R}^{N}:|x-y|<\epsilon\} $ and $ C(N,s) $ is a positive normalizing constant.

When $ p=2 $, the problem \eqref{Equa1} is a fractional version of a classical stationary Kirchhoff problem
\begin{equation}\label{Equa2}
-M\left (\int _{\Omega}|\nabla u|^{2}dx\right )\Delta u= g(x,u),\, \operatorname{in}\, \Omega,
\end{equation} 
where $ \Omega\subset \mathbb{R}^{N} $ is a smooth domain, with $ u $ satisfying some boundary conditions and $ g $ some growth conditions. The physical motivation for the study of this type of problem is the Kirchhoff equation modeling nonlinear vibrations 
\begin{equation*}
\left \{
\begin{array}{ll}
u_{tt}-M\left (\int _{\Omega}|\nabla u|^{2}dx\right )\Delta u= g(x,u) & \operatorname{in}\, \Omega\times (0,T), \\
u=0  &\operatorname{on}\,  \partial\Omega\times (0,T),\\
u(x,0)=u_{0}(x),u_{t}(x,0)=u_{1}(x),
\end{array}
\right.
\end{equation*}  
as a generalization of the stationary analogue of the equation proposed by Kirchhoff in \cite{Kir}
\begin{equation}\label{Equa61}
\rho \frac{\partial ^{2}u}{\partial t^{2}}-\left (\frac{P_{0}}{h}+\frac{E}{2L}\int _{0}^{L}\left |\frac{\partial u}{\partial x}\right |^{2}dx\right )\frac{\partial ^{2}u}{\partial x^{2}}=0
\end{equation}
which takes into account the effects of the changes in the length of the string produced from transverse vibrations. In equation \eqref{Equa61}, L is the length of the string, h is the area of cross-section, E is the Young modulus of the material, $ \rho $ is the mass density and $ P_{0} $ is the initial tension. The Kirchhoff's equation is a nonlinear extension of D'Alembert’s wave equation.

In the last years, a great attention have been devoted to nonlocal operators of the fractional type. These operators appear in several areas of knowledge such as mathematical finances, quantum mechanics, water waves, phase transition, minimal surface, population dynamics, optimal control, game theory, L\'{e}vy processes in probability theory, among others. For more details about these subjects and them applications see \cite{F, B, Caf, RO} and the references therein. As the studies about fractional problems were spreading, it also made sense to consider the presence of a Kirchhoff term in these equations. In \cite{Fi}, a physical motivation was proposed taking into account the nonlocal aspect of the tension in the string in the model proposed by Kirchhoff.  

Over the years, in the context of classical Laplacian operator, many authors dealt with problems of Kirchhoff proving results concerning existence, multiplicity and information about the signal of solutions through variational methods. Among those which prove existence of non-trivial and positive solutions we can highlight \cite{E}, including in the critical case as \cite{FG,FJ}. However, to find sign-changing solutions it seem to be more complicated as we can see in \cite{FF}. In that paper, the existence of a least energy sign-changing solution for problem \eqref{Equa2} was obtained in bounded domain $ \Omega\subset \mathbb{R}^{3} $, assuming $ M $ of class $ C^{1} $, increasing, non-degenerate ($ M(0)>0 $) with $ \frac{M(t)}{t} $ decreasing in $ t> 0 $ and $ g $ of class $ C^{1} $, subcritical growth satisfying 4-superlinear Ambrosetti-Rabinowitz type condition and
\begin{equation*}
	  \frac{g(t)}{t^{3}}  \operatorname{is}\; \operatorname{increasing}\; \operatorname{in}\;  t>0 \; \operatorname{and} \;\operatorname{decreasing} \operatorname{in}\;  t<0 .
\end{equation*}
In \cite{NN}, assuming similar conditions for $ M $, but more flexible as $ \frac{M(t)}{t^{\frac{\mu-2}{2}}} $ decreasing in $ t>0 $ for some $ \mu>2 $ and $ g=\lambda f $, $ \lambda>0 $ with $ f $ of subcritical growth, without 4-superlinear Ambrosetti-Rabinowitz condition and satisfying
\begin{equation*}
\frac{g(t)}{|t|^{\mu-2}t}  \operatorname{is}\; \operatorname{increasing}\; \operatorname{in}\;  t>0 \; \operatorname{and} \;\operatorname{decreasing} \operatorname{in}\;  t<0,
\end{equation*}  
a ground state solution and a least energy sign-changing solution were obtained in bounded domain $ \Omega\subset \mathbb{R}^{N} $, $ N\geq 3 $, expanding the main result of \cite{FF}. Furthermore, assuming $ M $ only increasing, a technique of truncation of function $ M $ proposed in \cite{FG} was used to obtain the same results for $ \lambda $ large enough. In the context of fractional Kirchhoff problems, we cite the articles \cite{Luo, Cheng} for sign-changing solutions.

Even with the advances in the studies of existence of sign-changing solutions, there are few works which address this problem involving critical growth nonlinearities. We highlight papers \cite{Ta,MF} for solutions of this type involving Laplacian operator. Recently, existence of least energy sign-changing solutions for fractional Brezis-Nirenberg problem, number of sign changes and asymptotic behavior of these solutions were established in \cite{COR}. In \cite{K2}, the authors obtained signed ground state and least energy sign-changing solutions for a problem involving fractional p-Laplacian and critical Hardy-Sobolev nonlinearities. The techniques developed and adapted in that paper will be strongly used in our paper. 

Motivated by the articles cited above, more specifically, by \cite{K2,FF,NN}, we are interested in obtaining three solutions with sign information for problem \eqref{Equa1} with emphasis on the critical case $ q=p_{\alpha}^{*} $ and $ f $ satisfying some conditions similar to those required in \cite{NN} and compatible with the fractional p-Laplacian operator.

In view of our problem, we assume that function $ M: [0,\infty)\rightarrow [0,\infty) $ is continuous with $ \overline{M}(t):=M(t)t $ continuously differentiable in $ [0,\infty) $ and it satisfies the following conditions:
\begin{itemize}
\item[$ (M_{1}) $]\labeltarget{M1} $ M $ is increasing;
\item[$ (M_{2}) $]\labeltarget{M2} there exists $ \gamma \in (p,q) $ such that $ \frac{M(t)}{t^{\frac{\gamma-p}{p}}} $ is decreasing in $ t>0 $.
\end{itemize}

A prototype for $ M $ is given by $ M(t)=a+bt^{\vartheta} $ for
$ t \geq 0 $, with $ a, b \geq 0 $, $ a+b>0 $ and $ 0<\vartheta<\frac{q-p}{p} $. When $ M(t) \geq m_{0}>0 $ for all $ t \geq 0 $, Kirchhoff problems are said to be non-degenerate and in our model this occurs whenever $ a > 0 $ and $ b \geq 0 $. On the other hand, if $ M(0) = 0 $ and $ M(t) > 0 $ for all $ t > 0 $, the Kirchhoff problems are called degenerate which is the case of our particular $ M $ when $ a = 0 $ and $ b > 0 $.

In this paper, we deal with the non-degenerate and degenerate problems. Among the papers in which both the cases were dealt for fractional operators we highlight, for example, \cite{A1} in bounded domains with focus in the existence of positive solutions, and \cite{CP,FP} in $ \mathbb{R^{N}} $ whole with focus in the existence of non-trivial solutions.

Also, we assume that nonlinearity $ f:\Omega \times \mathbb{R}\rightarrow \mathbb{R}$ is a Carath{\'e}odory function such that $ \overline{f}(x,t):=f(x,t)t $ is continuously differentiable in the variable $ t $, for a.e. $ x\in \Omega $. Furthermore, in the first part of the paper, where we deal the degenerate case, we assume that $ f $ satisfies the following conditions:

\begin{itemize}
\item[($ f_{0} $)] \labeltarget{0} $ \overline{f}(x,t)>0 $, for a.e. $ x\in \Omega $, for all $ t\neq 0 $ and there exists $ C>0 $ such that 
\begin{equation*}
|\partial_{t}\overline{f}(x,t)|\leq C(1+|t|^{p^{*}-1}),
\end{equation*}
for a.e. $ x\in \Omega $ and all $ t\in \mathbb{R} $, where $ p^{*}:=p_{0}^{*}=\frac{Np}{N-sp} $ is the fractional critical Sobolev exponent.

\item[($ f_{1} $)] \labeltarget{1}  given $ \epsilon >0 $ and $ r\in (p,p^{*}) $, there exists $ C_{\epsilon,r}>0 $ such that 
\begin{equation*}
|f(x,t)|\leq \epsilon (|t|^{\gamma-1}+|t|^{p^{*}-1})+C_{\epsilon,r}|t|^{r-1},
\end{equation*} 
for a.e. $ x\in \Omega $ and all $ t\in \mathbb{R} $.

\item[($ f_{2} $)] \labeltarget{2} there exists $ \mu\in(\gamma,q) $ such that 
\begin{equation*}
	\mu F(x,t)\leq f(x,t)t
\end{equation*}
for a.e. $ x\in \Omega $ and all $ t\in \mathbb{R} $, where $ F(x,t):=\int _{0}^{t}f(x,\tau)d\tau $.

\item[($ f_{3} $)] \labeltarget{3} $ \frac{f(x,t)}{|t|^{\gamma-2}t} $ is increasing in $ t>0 $ and decreasing in $ t<0 $, for a.e. $ x \in \Omega $.
\end{itemize}

In the second part of the paper, we work in the non-degenerate case. One of the advantages of $ M $ to have a positive boundedness from bellow is the possibility of to remove the hypothesis \hyperlink{M2}{$(M_{2}) $}, keeping only \hyperlink{M1}{$(M_{1}) $} and even so to obtain a similar result for $ \lambda>0 $ large enough, making a truncation in the function $ M $ as made in \cite{NN}. For the non-degenerate case, we assume that $ f $ satisfies \hyperlink{0}{$ (f_{0}) $} and the following conditions:
\begin{itemize}
	\item[($ F_{1} $)]\labeltarget{F1} given $ \epsilon>0 $ and $ r\in (p,p^{*}) $, there exists $ C>0 $ such that
	\begin{equation*}
	|f(x,t)|\leq \epsilon (|t|^{p-1}+|t|^{p^{*}-1})+C_{\epsilon,r}|t|^{r-1},
	\end{equation*}
	for a.e. $ x\in \Omega $, for all $ t\in \mathbb{R} $.
	\item[($ F_{2} $)]\labeltarget{F2} there exists $ \mu \in (p,q) $ such that $\frac{f(x,t)}{|t|^{\mu-2}t} $ is increasing in $ t>0 $ and decreasing in $ t<0 $, for a.e. $ x\in \Omega $.
\end{itemize}
In this case, \hyperlink{F1}{$ (F_{1}) $} replaces \hyperlink{1}{$ (f_{1}) $} and \hyperlink{F2}{$ (F_{2}) $} replaces both \hyperlink{2}{$ (f_{2}) $} and \hyperlink{3}{$ (f_{3}) $}.
\vspace{0,6cm}

\textbf{Examples:}
\begin{itemize}
	\item[(a)] $ M(t)=\arctan\left (t^{\vartheta}\right ) $, $ 0<\vartheta<\frac{q-p}{p} $, is a degenerate Kirchhoff function which satisfies \hyperlink{M1}{$ (M_{1}) $}-\hyperlink{M2}{($ M_{2} $)} and
	\begin{equation*}
	f(x,t)=a(x)|t|^{r-2}t ,\;  p(\vartheta+1)<r<p^{*} ,\; a\in L^{\infty}(\Omega) ,\; a(x)\gneqq 0\;  \operatorname{a.e.}  x\in \Omega,
	\end{equation*}
	satisfies the conditions \hyperlink{0}{$ (f_{0}) $}-\hyperlink{3}{$ (f_{3}) $}.
	\item[(b)] Examples of non-degenerate Kirchhoff function $ M $ satisfying \hyperlink{M1}{$ (M_{1}) $}-\hyperlink{M2}{$ (M_{2}) $} and a function $ f $ which is not odd and satisfies \hyperlink{0}{$ (f_{0}) $}, \hyperlink{F1}{$ (F_{1}) $} and \hyperlink{F2}{$ (F_{2}) $} are, respectively, $ M(t)=m_{0}+\log(1+t^{\vartheta})$, $ m_{0}>0 $, $ 0<\vartheta<\frac{q-p}{p} $ and  
	\begin{equation*}
	f(t)=r\log(1+t^{+})(t^{+})^{r-1}+\frac{|t^{-}|^{r-1}t^{-}}{1+|t^{-}|} , \; p<r<p^{*},\;\; t^{+}:=\max\{t,0\},\;  t^{-}:=\min\{t,0\} .
	\end{equation*}
\end{itemize}

Our main result related to degenerate case is as follows:
\begin{theorem}\label{Theo11}
	Suppose that $ M(0)=0 $, $ M $ satisfies \hyperlink{M1}{$( M_{1}) $}-\hyperlink{M2}{$ (M_{2}) $} and $ f $ satisfies \hyperlink{0}{($ f_{0} $)}-\hyperlink{3}{($ f_{3} $)}. Then the following statements hold true:
	\begin{itemize}
		\item[(1)] if $ p<q<p_{\alpha}^{*} $, we have that, for all $ \lambda >0 $
		\begin{itemize}
			\item[($ \mathcal{C}_{\lambda} $)]\labeltarget{C}	 
			problem \eqref{Equa1} has a positive solution $ u_{\lambda,1} $ and a negative one $ u_{\lambda,2} $ such that one of them is a ground state solution. Furthermore, problem \eqref{Equa1} has a least energy sign-changing solution $ u_{\lambda,3} $ such that 
			\begin{equation*}
			I_{\lambda}(u_{\lambda,3})>I_{\lambda}(u_{\lambda,1})+I_{\lambda}(u_{\lambda,2}),
			\end{equation*}
			where $ I_{\lambda} $ is the energy functional associated to this problem.
		\end{itemize}
		\item[(2)] if $ q=p_{\alpha}^{*} $, then there exists $ \lambda^{*}>0 $ such that \hyperlink{C}{$ (\mathcal{C}_{\lambda}) $} holds for all $ \lambda\geq \lambda^{*} $. 
	\end{itemize} 
\end{theorem}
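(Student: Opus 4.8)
The plan is to obtain all three solutions by constrained minimization over Nehari-type sets, isolating the only genuinely new difficulty---loss of compactness---in the critical regime. Throughout I work in $X_{0}:=W^{s,p}_{0}(\Omega)$ equipped with the Gagliardo seminorm $[u]^{p}:=\int_{\mathbb{R}^{2N}}\frac{|u(x)-u(y)|^{p}}{|x-y|^{N+ps}}\,dx\,dy$, and with the energy functional
\[
I_{\lambda}(u)=\frac{1}{p}\,\widehat{M}\!\left([u]^{p}\right)-\lambda\int_{\Omega}F(x,u)\,dx-\frac{1}{q}\int_{\Omega}\frac{|u|^{q}}{|x|^{\alpha}}\,dx,\qquad \widehat{M}(t):=\int_{0}^{t}M(\tau)\,d\tau,
\]
which is $C^{1}$ thanks to \hyperlink{0}{$(f_{0})$} and the differentiability of $\overline{M}$. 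The two structural facts I would record first are: (i) for the nonlocal seminorm one only has the \emph{strict superadditivity} $[u]^{p}>[u^{+}]^{p}+[u^{-}]^{p}$ whenever $u^{\pm}\not\equiv0$, since on the cross region the inequality $(a+b)^{p}>a^{p}+b^{p}$ ($a,b>0$, $p>1$) is strict; and (ii) by \hyperlink{M1}{$(M_{1})$} the primitive $\widehat{M}$ is convex with $\widehat{M}(0)=0$, hence superadditive and strictly increasing on $(0,\infty)$, while \hyperlink{M2}{$(M_{2})$} and \hyperlink{3}{$(f_{3})$} make every fibering ray $t\mapsto\langle I_{\lambda}'(tu),tu\rangle$ cross zero exactly once. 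Combined with \hyperlink{2}{$(f_{2})$}, these give coercivity and boundedness from below of $I_{\lambda}$ on the relevant constraint sets.

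For the signed solutions I set $\mathcal{N}_{\lambda}=\{u\neq0:\langle I_{\lambda}'(u),u\rangle=0\}$ and minimize $I_{\lambda}$ over $\mathcal{N}_{\lambda}\cap\{u\ge0\}$ and over $\mathcal{N}_{\lambda}\cap\{u\le0\}$, calling the infima $c_{\lambda}^{+}$ and $c_{\lambda}^{-}$. In the subcritical case $p<q<p_{\alpha}^{*}$ the embedding $X_{0}\hookrightarrow L^{q}(\Omega;|x|^{-\alpha}dx)$ is compact, so minimizing sequences converge and $c_{\lambda}^{\pm}$ are attained; a deformation argument (or the implicit-function structure of the fibering map) shows the minimizers are critical points of $I_{\lambda}$, and a weak comparison / maximum-principle argument for the nonlocal operator yields a genuine positive solution $u_{\lambda,1}$ and a negative one $u_{\lambda,2}$. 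Since a sign-changing critical point automatically lies in the set $\mathcal{M}_{\lambda}$ below and will have energy exceeding $c_{\lambda}^{+}+c_{\lambda}^{-}\ge\min(c_{\lambda}^{+},c_{\lambda}^{-})$, the global minimizer of $I_{\lambda}$ over $\mathcal{N}_{\lambda}$ cannot change sign, which identifies the smaller of $u_{\lambda,1},u_{\lambda,2}$ as a ground state.

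For the sign-changing solution I put
\[
\mathcal{M}_{\lambda}=\left\{u\in X_{0}:u^{+}\neq0,\ u^{-}\neq0,\ \langle I_{\lambda}'(u),u^{+}\rangle=\langle I_{\lambda}'(u),u^{-}\rangle=0\right\},\qquad m_{\lambda}:=\inf_{\mathcal{M}_{\lambda}}I_{\lambda}.
\]
The crux is the two-parameter fibering analysis: for fixed $u$ with $u^{\pm}\neq0$ the map $(\sigma,\tau)\mapsto I_{\lambda}(\sigma u^{+}+\tau u^{-})$ on $(0,\infty)^{2}$ must have a unique interior critical point $(\sigma_{u},\tau_{u})$ that is its global maximum. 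Here the nonlocal coupling enters through $\widehat{M}$ evaluated at $[\sigma u^{+}+\tau u^{-}]^{p}$, which mixes $\sigma$ and $\tau$; following the strategy of \cite{K2} I would show the associated $2\times2$ gradient system is strictly monotone, so Miranda's theorem gives existence and the monotonicity gives uniqueness. A minimizing sequence for $m_{\lambda}$ is then bounded by \hyperlink{2}{$(f_{2})$}, its projections $(\sigma_{u_{n}},\tau_{u_{n}})$ stay in a compact subset of $(0,\infty)^{2}$, and subcritical compactness produces a minimizer $u_{\lambda,3}\in\mathcal{M}_{\lambda}$; a quantitative deformation lemma restricted to the region where both nodal parts survive shows it is a critical point. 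The energy inequality then follows from the structural facts: taking the Nehari projections $\sigma^{*}u_{\lambda,3}^{+},\,\tau^{*}u_{\lambda,3}^{-}$ of the two nodal parts,
\[
I_{\lambda}(u_{\lambda,3})\ge I_{\lambda}(\sigma^{*}u_{\lambda,3}^{+}+\tau^{*}u_{\lambda,3}^{-})>I_{\lambda}(\sigma^{*}u_{\lambda,3}^{+})+I_{\lambda}(\tau^{*}u_{\lambda,3}^{-})\ge c_{\lambda}^{+}+c_{\lambda}^{-},
\]
where the strict middle inequality is exactly the strict superadditivity of $[\,\cdot\,]^{p}$ combined with the monotonicity and superadditivity of $\widehat{M}$, and the disjoint supports make the $F$- and Hardy-terms split exactly.

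The genuinely hard part is the critical case $q=p_{\alpha}^{*}$, where $X_{0}\hookrightarrow L^{p_{\alpha}^{*}}(\Omega;|x|^{-\alpha}dx)$ is no longer compact and minimizing sequences may concentrate at $0$ at the best Hardy--Sobolev constant $S_{H,\alpha}$. The plan is the familiar threshold argument: one shows that Palais--Smale / minimizing sequences are relatively compact as long as the energy stays strictly below an explicit threshold $c^{*}$ (a power of $S_{H,\alpha}$ depending on the Kirchhoff data, and roughly doubled for the sign-changing level to account for concentration in either nodal part). To force $c_{\lambda}^{\pm}$ and $m_{\lambda}$ below $c^{*}$ I would insert truncated Hardy--Sobolev extremal functions as test directions and exploit the subcritical perturbation $\lambda\int_{\Omega}F$, whose negative contribution to the minimax level grows with $\lambda$; choosing $\lambda\ge\lambda^{*}$ large makes this gain dominate the concentration error terms and pushes all relevant levels below the threshold. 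I expect the main obstacles to be precisely the energy expansions of the truncated extremals under the degenerate Kirchhoff term $\widehat{M}$ (where $M(0)=0$ changes the leading homogeneity) and the control of the mixed $(\sigma,\tau)$-dependence needed for the sign-changing sub-threshold estimate, both sensitive to the interplay of $sp$, $\alpha$ and $\gamma$. Once these sub-threshold bounds hold for $\lambda\ge\lambda^{*}$, the three constructions of the subcritical case carry over verbatim and yield \hyperlink{C}{$(\mathcal{C}_{\lambda})$}.
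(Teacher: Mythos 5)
Your part (1) follows essentially the paper's own route (fibering analysis on Nehari and nodal-Nehari sets, uniqueness of the projections from \hyperlink{M1}{$(M_{1})$}--\hyperlink{M2}{$(M_{2})$} and \hyperlink{3}{$(f_{3})$}, a degree/deformation step to show the nodal minimizer is critical, and the superadditivity chain for the energy-doubling inequality), so apart from glossing over the fact that $\mathcal{N}_{\lambda}^{\pm}$ and $\mathcal{M}_{\lambda}$ are not weakly closed, that part is acceptable as a sketch. The problems are in part (2). First, your mechanism for pushing the levels below the compactness threshold --- Brezis--Nirenberg-type energy expansions of truncated Hardy--Sobolev extremals --- is both problematic and unnecessary here. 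For the fractional $p$-Laplacian with $p\neq2$ the extremals of $S_{\alpha}$ in \eqref{EquabestC} are not explicit, and the degenerate Kirchhoff term ($M(0)=0$, with the fibering governed by the exponent $\gamma$) changes the homogeneity of the leading term; you flag this as an ``obstacle,'' but it is the heart of the matter, not a technicality. The paper avoids it entirely: since $F(x,t)>0$, the fibering maxima of any fixed sign-changing $w$ collapse as $\lambda\to\infty$, so all the levels $c_{\mathcal{N}_{\lambda}^{\pm}}$, $c_{\mathcal{M}_{\lambda}}$, and $\limsup_{n}c_{\mathcal{M}_{\lambda,q_{n}}}$ tend to $0$ (Proposition \ref{Prop1}); because the theorem only claims \hyperlink{C}{$(\mathcal{C}_{\lambda})$} for $\lambda\geq\lambda^{*}$, this elementary asymptotic estimate already gives the sub-threshold bounds, with no test-function expansion at all.

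Second, and more seriously, your claim that once the levels are sub-threshold ``the three constructions of the subcritical case carry over verbatim'' is false for the sign-changing solution, and this is exactly where the paper changes strategy. Sub-threshold compactness (Proposition \ref{Prop3}) applies to Palais--Smale sequences; a minimizing sequence on $\mathcal{M}_{\lambda,p_{\alpha}^{*}}$ is not a PS sequence, and $\mathcal{M}_{\lambda}$ is not a $C^{1}$ manifold (the maps $u\mapsto u^{\pm}$ are only Lipschitz), so you cannot upgrade it via Ekeland or Lagrange multipliers. In the subcritical case the paper passes to the weak limit of a nodal minimizing sequence by using the \emph{compact} embedding into $L^{q}(\Omega,|x|^{-\alpha}dx)$, $q<p_{\alpha}^{*}$, to keep $w^{\pm}\neq0$ and to project the limit back into $\mathcal{M}_{\lambda}$; at $q=p_{\alpha}^{*}$ precisely this compactness is missing, and the Kirchhoff term destroys weak lower semicontinuity of the relevant quantities --- the difficulty the paper states explicitly in the introduction. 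Its substitute is a quasi-critical approximation: take $q_{n}\nearrow p_{\alpha}^{*}$, use the least-energy nodal solutions $w_{q_{n}}$ already produced by item (1), and show --- via the variable-exponent concentration-compactness principle (Lemma \ref{lem4}), the lower bound on concentration masses (Lemma \ref{lem6}), and the level estimates $c_{\mathcal{N}_{\lambda,p_{\alpha}^{*}}}<l_{1}$ and $\limsup_{n}c_{\mathcal{M}_{\lambda,q_{n}}}<c_{\mathcal{N}_{\lambda,p_{\alpha}^{*}}}+l_{1}$ valid for $\lambda$ large --- that $w_{q_{n}}$ converges strongly to a least-energy sign-changing solution of the critical problem. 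Your proposal contains no substitute for this step, so as written the critical nodal solution is not obtained.
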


Our main result related to non-degenerate case is as follows:
\begin{theorem}\label{Theo1}
Suppose that $ M(0)=m_{0}>0 $. Then, the following statements are hold true: 
\begin{itemize}
	\item[(1)] if $ p<q<p_{\alpha}^{*} $, $ M $ satisfies \hyperlink{M1}{$ (M_{1}) $}-\hyperlink{M2}{$ (M_{2}) $} and $ f $ satisfies \hyperlink{0}{$ (f_{0}) $}, \hyperlink{F1}{$ (F_{1}) $}, \hyperlink{2}{$ (f_{2}) $}, \hyperlink{3}{$ (f_{3}) $}, then \hyperlink{C}{$ (\mathcal{C}_{\lambda}) $} holds for all $ \lambda>0 $.
	\item[(2)] if $ p<q<p_{\alpha}^{*} $, $ M $ satisfies \hyperlink{M1}{$ (M_{1}) $} and $ f $ satisfies \hyperlink{0}{$ (f_{0}) $}, \hyperlink{F1}{$ (F_{1})$}, \hyperlink{F2}{$ (F_{2}) $}, then there exists $ \lambda^{**}>0 $ such that \hyperlink{C}{$ (\mathcal{C}_{\lambda}) $} holds for all $ \lambda\geq \lambda^{**} $.
	\item[(3)] if $ q=p_{\alpha}^{*} $, $ M $ satisfies \hyperlink{M1}{$ (M_{1}) $} and $ f $ satisfies \hyperlink{0}{$ (f_{0}) $}, \hyperlink{2}{$ (F_{1}) $}, \hyperlink{F2}{$ (F_{2}) $}, then there exists $ \lambda^{**}>0 $ such that \hyperlink{C}{$ (\mathcal{C}_{\lambda}) $} holds for all $ \lambda\geq \lambda^{**} $.
\end{itemize}
\end{theorem}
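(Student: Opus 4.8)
The plan is to obtain all three items by constrained minimization on Nehari-type sets, following the scheme of \cite{K2} adapted to the non-degenerate Kirchhoff term. Working in the fractional Sobolev space $X_0:=W^{s,p}_0(\Omega)$ with Gagliardo seminorm $\|\cdot\|$, I associate to \eqref{Equa1} the $C^1$ energy functional
\begin{equation*}
I_\lambda(u)=\frac{1}{p}\widehat{M}(\|u\|^p)-\lambda\int_\Omega F(x,u)\,dx-\frac{1}{q}\int_\Omega\frac{|u|^q}{|x|^\alpha}\,dx,\qquad \widehat{M}(t):=\int_0^t M(\tau)\,d\tau,
\end{equation*}
which is well defined since $M(0)=m_0>0$ and which satisfies the mountain-pass geometry. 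For the signed solutions I minimize $I_\lambda$ over the half-Nehari sets $\mathcal{N}^{\pm}_\lambda=\{u\in\mathcal{N}_\lambda:\pm u\ge 0\}$, and for the sign-changing solution over the nodal Nehari set
\begin{equation*}
\mathcal{M}_\lambda=\bigl\{u\in X_0: u^{+}\neq 0\neq u^{-},\ \langle I'_\lambda(u),u^{+}\rangle=\langle I'_\lambda(u),u^{-}\rangle=0\bigr\}.
\end{equation*}

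The engine of the argument is a fiber-map analysis. For fixed $u$ with $u^{\pm}\neq 0$ I study the map $(t,\sigma)\mapsto I_\lambda(tu^{+}+\sigma u^{-})$ on $(0,\infty)^2$ and show it has a unique critical point $(t_u,\sigma_u)$, its global maximum, so that $t_u u^{+}+\sigma_u u^{-}\in\mathcal{M}_\lambda$; the same computation on single-signed functions gives the unique projection onto $\mathcal{N}^{\pm}_\lambda$. A subtlety proper to the fractional operator is that $\langle(-\Delta_p)^s u,u^{\pm}\rangle$ differs from $\|u^{\pm}\|^p$ by nonlocal interaction terms, which are nevertheless monotone in the scaling parameters; combined with $(M_1)$, the decreasing character of $M(t)/t^{(\gamma-p)/p}$ in $(M_2)$, and the monotonicity of $f$ in $(f_3)$ or $(F_2)$, this forces strict monotonicity of the gradient of the fiber map and hence the claimed uniqueness.

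For item (1), where $(M_1)$--$(M_2)$ hold and the growth is subcritical, the compact embeddings $X_0\hookrightarrow L^r(\Omega)$ for $r<p^{*}$ and the Hardy embedding for $q<p^{*}_\alpha$ yield the Palais--Smale condition, so the three constrained infima are attained; a quantitative deformation lemma as in \cite{K2} upgrades each constrained minimizer to a free critical point, and the superadditivity $I_\lambda(u_{\lambda,3})>I_\lambda(u_{\lambda,1})+I_\lambda(u_{\lambda,2})$ follows from the strict fiber inequalities applied to $u^{+}_{\lambda,3}$ and $u^{-}_{\lambda,3}$ separately. For item (2), where only $(M_1)$ is assumed, I invoke the truncation device of \cite{NN,FG}: replace $M$ by $M_a$ coinciding with $M$ on $[0,a]$ and constant thereafter, so that $M_a$ satisfies $(M_1)$--$(M_2)$ with the exponent dictated by $(F_2)$; apply item (1) to the truncated functional; and finally show that the resulting solutions satisfy $\|u\|^p\le a$ once $\lambda\ge\lambda^{**}$, so that $M_a=M$ along them and they solve the original problem. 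The norm control comes from the Nehari identity together with $(F_2)$, which forces the critical levels, and hence the norms, to shrink as $\lambda$ grows.

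The hard part is item (3), the critical case $q=p^{*}_\alpha$, where the loss of compactness induced by the Hardy--Sobolev term must be confronted. Here the Palais--Smale condition holds only below a threshold $c^{*}$ governed by the best fractional Hardy--Sobolev constant $S_\alpha$. The plan is to combine the truncation of item (2) with test-function estimates based on truncations of the extremals for $S_\alpha$ centered at the singularity $0\in\Omega$, in order to show that for $\lambda$ large the signed Nehari levels lie strictly below $c^{*}$ and the nodal level lies strictly below the corresponding sign-changing threshold. The term $\lambda\int_\Omega F(x,u)$ is precisely what pushes these levels down: as $\lambda\to\infty$ they tend to zero, the compactness threshold is eventually respected, the infima are attained, and the argument of item (2) then produces genuine signed and sign-changing solutions of \eqref{Equa1}. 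I expect the most delicate point to be the strict nodal level estimate below $c^{*}$, where one must control the nonlocal interaction between $u^{+}$ and $u^{-}$ rather than treat them additively as in the local case.
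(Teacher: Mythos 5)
Your plan for items (1) and (2) is essentially the paper's: constrained minimization on $\mathcal{N}_\lambda^{\pm}$ and $\mathcal{M}_\lambda$ via fiber-map uniqueness, attainment through an Ekeland-type argument plus the subcritical Palais--Smale condition, and, for item (2), the truncation of \cite{NN} followed by the observation that the minimal levels of the truncated problem decay as $\lambda\to\infty$ (Proposition \ref{Prop21}), which forces $\|u_{\lambda,a,i}\|^p\le\delta$ so that $M_a=M$ along the solutions and they solve the original problem. Two small caveats. First, the paper promotes the nodal constrained minimizer to a free critical point by a Brouwer degree/homotopy argument resting on $\det J_{(1,1)}\Psi_w>0$ (Lemma \ref{Lem6}), not by a quantitative deformation lemma; either route works. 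Second, the truncation cannot be ``constant thereafter'': the paper composes $M$ with a $C^2$ function $m$ precisely because differentiability of $\overline{M}_a$ is needed in the implicit-function-theorem step (Lemma \ref{lem5}) and in the Jacobian computation, and the constant $a$ must be chosen with $a<\tfrac{1}{p}m_0\mu$ so that the coercivity estimate $I_{\lambda,a}(u)\ge\left(\tfrac{m_0}{p}-\tfrac{a}{\mu}\right)\|u\|^p$ on $\mathcal{N}_{\lambda,a}$ (Lemma \ref{Lem21}) has a positive constant; your sketch glosses over both points.

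The genuine gap is in item (3). You propose to push the levels below the compactness threshold (by extremal test functions and/or the $\lambda$-decay of levels) and then assert that ``the infima are attained.'' For the signed levels this is fine: the Ekeland argument produces a genuine $(PS)$ sequence, and the threshold restores compactness. But for the nodal level it is not: a minimizing sequence on $\mathcal{M}_{\lambda,a}$ is \emph{not} a Palais--Smale sequence, and the concentration-compactness lower bound on atoms (Lemma \ref{lem6} and its non-degenerate analogue) requires $I'_{\lambda,a,q_n}(u_n)\to 0$; without it, nothing prevents the weak limit $w$ of your minimizing sequence from losing its nodal structure ($w^+=0$ or $w^-=0$) through concentration at the origin. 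This is exactly the lack of weak lower semicontinuity that the paper flags in the introduction as the reason direct minimization fails when $q=p_\alpha^*$. The paper's key idea, absent from your proposal, is quasi-critical approximation: fix $q_n\nearrow p_\alpha^*$, take the least-energy nodal solutions $w_{q_n}$ of the subcritical truncated problems already furnished by item (1) of Theorem \ref{Theo21} (these are exact critical points, so the concentration lemma applies to them), use $\lim_{\lambda\to\infty}\limsup_n c_{\mathcal{M}_{\lambda,a,q_n}}=0$ (Proposition \ref{Prop21}) to exclude concentration for $\lambda$ large (an atom would force the level above $2l_2$, or above $c_{\mathcal{N}_{\lambda,a,p_\alpha^*}}+l_2$), deduce strong convergence $w_{q_n}\to w$ with $w\in\mathcal{M}_{\lambda,a,p_\alpha^*}$ and $I'_{\lambda,a,p_\alpha^*}(w)=0$, and finally obtain the least-energy property by projecting any competitor $v\in\mathcal{M}_{\lambda,a,p_\alpha^*}$ onto $\mathcal{M}_{\lambda,a,q_n}$ and passing to the limit; only then does your item-(2) transfer back to $M$ apply. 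Incidentally, since the theorem claims item (3) only for $\lambda\ge\lambda^{**}$, the Brezis--Nirenberg-type test-function estimates you anticipate are unnecessary: the $\lambda$-decay of the levels alone brings them below the threshold, sparing you delicate extremal computations with the Hardy weight and the Kirchhoff term.
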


\begin{remark}
	From conclusion \hyperlink{C}{$ (\mathcal{C}_{\lambda}) $}, we have that the energy of any sign-changing solution of \eqref{Equa1} is larger than two times the ground state energy, i.e., the least energy of all weak solutions of \eqref{Equa1}. This property is called energy doubling by Weth in \cite{TW}.
\end{remark}

\textit{-About of method:}

Our strategy is minimization on Nehari sets for energy functional 
\begin{equation*}
	I_{\lambda}(u)=\frac{1}{p}\widehat{M}(\|u\|^{p})-\lambda \int _{\Omega} F(x,u)dx-\frac{1}{q}\int_{\Omega} \frac{|u|^{q}}{|x|^{\alpha}}dx,
\end{equation*}
where $ \widehat{M}(t):=\int_{0}^{t} M(\tau)d\tau $.

We intend to find three solution for the problem \eqref{Equa1} which minimize functional $ I_{\lambda} $ among, respectively, all positive, negative or sign-changing solutions and to check that one of the signed solutions, indeed, minimizes $ I_{\lambda} $ among all non-trivial solutions. Since we are interested in these types of solutions, we consider the following Nehari sets:
\begin{eqnarray*}
	\begin{array}{rl}
	\mathcal{N}_{\lambda}&:=\{u\in W_{0}^{s,p}(\Omega)\setminus \{0\}: \langle I_{\lambda}(u),u \rangle =0\};\\
	\mathcal{N}_{\lambda}^{+}&:=\{u\in \mathcal{N}_{\lambda}: u^{-}=0\};\\
	\mathcal{N}_{\lambda}^{-}&:=\{u\in \mathcal{N}_{\lambda}: u^{+}=0\};\\
	\mathcal{M}_{\lambda}&:=\{u\in W_{0}^{s,p}(\Omega): \langle I_{\lambda}(u),u^{+} \rangle =\langle I_{\lambda}(u),u^{-} \rangle=0, u^{+}\neq 0, u^{-}\neq 0 \},
    \end{array}
\end{eqnarray*}  
where $ u^{+}(x):=\max\{u(x),0\} $ and $ u^{-}(x):=\min\{u(x),0\} $.

More precisely, the idea is to show that the infima
\begin{eqnarray*}
	\begin{array}{c}
		c_{\mathcal{N}_{\lambda}^{+}}:= \inf_{u\in \mathcal{N}_{\lambda}^{+}}I_{\lambda}(u);\\
		c_{\mathcal{N}_{\lambda}^{-}}:= \inf_{u\in \mathcal{N}_{\lambda}^{-}}I_{\lambda}(u);\\
		c_{\mathcal{M}_{\lambda}}:= \inf_{u\in \mathcal{M}_{\lambda}}I_{\lambda}(u)
	\end{array}
\end{eqnarray*}
are achieved by critical points of $ I_{\lambda} $, as well as to show that
 
\begin{equation*}
c_{\mathcal{N}_{\lambda}}:= \inf _{u\in \mathcal{N}_{\lambda}}I_{\lambda}(u)=\min\{c_{\mathcal{\mathcal{N}_{\lambda}^{+}}},c_{\mathcal{N}_{\lambda}^{-}}\}\;\;\operatorname{and}\;\; 	c_{\mathcal{M}_{\lambda}}>	c_{\mathcal{N}_{\lambda}^{+}}+	c_{\mathcal{N}_{\lambda}^{-}}.
\end{equation*}

\textit{Signed solutions:} 

We apply Mountain Pass Theorem to prove that \eqref{Equa1} has ground state solution, namely, that $ c_{\mathcal{N}_{\lambda}} $ is achieved by a critical point of $ I_{\lambda} $. Then, using Ekeland's variational principle, we show that $ c_{\mathcal{N}_{\lambda}^{+}} $ and $ c_{\mathcal{N}_{\lambda}^{-}} $ are also achieved by critical points of $ I_{\lambda} $. Once any ground state solution for problem \eqref{Equa1} has constant sign, we establish that $ c_{\mathcal{N}_{\lambda}}=\min\{c_{\mathcal{N}_{\lambda}^{+}},c_{\mathcal{N}_{\lambda}^{-}}\} $.      

\textit{Sign-changing solution in the subcritical case:} 

In order to find sign-changing solution, the minimization occurs on $ \mathcal{M}_{\lambda} $, however, it seem to be more difficult, even in the subcritical setting. Indeed, if $ u $ changes sign, namely, $ u^{+}\neq 0 $ and $ u^{-}\neq 0 $, then the nonlocal interactions between $ u^{+} $ and $ u^{-} $ result in the inequality
\begin{equation*}
	\|u\|^{p}>\|u^{+}\|^{p}+\|u^{-}\|^{p},
\end{equation*} 
as we will see in \eqref{Equa62}, differently of the local case s =1, on which the equality is valid. This, together with the Kirchhoff term, produce some difficulties even to prove that $ \mathcal{M}_{\lambda} $ is nonempty. In the subcritical setting, we prove that $ c_{\mathcal{M}_{\lambda}} $ is achieved by a critical point of $ I_{\lambda} $ using Brouwer degree in $ \mathbb{R}^{2} $.

\textit{Sign-changing solution in the critical case:}

In critical case, when we try to minimize the functional $ I_{\lambda} $ on $ \mathcal{M}_{\lambda} $ through direct method of Calculus of Variations, we face with a problem due the lack of weak lower semicontinuity of $ I_{\lambda} $. So, we change our strategy by using the solution obtained in the subcritical case to produce the desired solution in the critical case. This can be made through quasi-critical approximation as in \cite{CC,Ta}. We consider a sequence of exponents $ \{q_{n}\}\subset (p,p_{\alpha}^{*}) $ such that $ q_{n}\rightarrow p_{\alpha}^{*} $ and a sequence of least energy sign-changing solutions $ \{u_{n}\} $ for \eqref{Equa1} corresponding to $ \{q_{n}\} $. We prove that, for $ \lambda>0 $ large enough, sequence $ \{u_{n}\} $ converges to a least energy sign-changing solution of \eqref{Equa1} with $ q=p_{\alpha}^{*} $, using a concentration compactness principle with variable exponents developed in \cite{K2}.

The paper is organized as follows. In Section \ref{Pre}, we introduce a variational setting of the problem and present some definitions and preliminary results. In Section \ref{Aux}, we construct a truncation of the function $ M $ and we present a result related to the auxiliary problem associated with the truncated function. In Section \ref{Section2}, we prove some technical results in the degenerate case and we enunciate the corresponding ones in the non-degenerate case. Finally, in Section \ref{Section3}, we prove the main results of this paper.  

\section{Preliminaries}\label{Pre}
\subsection{Definitions and functional spaces}
We start introducing the fractional Sobolev space and some informations about the weak formulation of problem \eqref{Equa1}. For any measurable function $ u:\mathbb{R}^{N}\rightarrow \mathbb{R} $, we define the Gagliardo seminorm by setting
\begin{equation*}
[u]_{s,p}:= \left(\int_{\mathbb{R}^{N}}\frac{|u(x)-u(y)|^{p}}{|x-y|^{N+ps}}dx dy \right)^{\frac{1}{p}}
\end{equation*}
for all $ p\geq 1 $ and the fractional Sobolev space, we define by 
\begin{equation*}
W^{s,p}(\mathbb{R}^{N}):=\{u \in L^{p}(\mathbb{R}^{N}): [u]_{s,p}<\infty\},
\end{equation*}
equipped with the norm
\begin{equation*}
\|u\|_{W^{s,p}(\mathbb{R}^{N})}:=\left(|u|_{p}^{p}+[u]_{s,p}^{p}\right)^{\frac{1}{p}},
\end{equation*}
where $ |\;\cdotp|_{p} $ is the norm in $ L^{p}(\mathbb{R}^{N}) $ (see \cite{D} for more details). Since our problem involves a bounded domain $ \Omega \subset \mathbb{R}^{N} $, we introduce the space
\begin{equation*}
W_{0}^{s,p}(\Omega):=\{u\in W^{s,p}(\mathbb{R}^{N}):\, u(x)=0, \,\operatorname{a.e.}\, x\in \mathbb{R}^{N}\setminus\Omega  \}
\end{equation*}
which is a separable reflexive Banach space with respect to the norm $ [\,\cdot\,]_{s,p} $ and this space can be seen as the completion of $ C_{c}^{\infty}(\mathbb{R}^{N}) $ in the norm $ [\,\cdot\,]_{s,p} $. We denote the topological dual of $ W_{0}^{s,p}(\Omega) $ by $ W^{-s,p'}(\Omega) $, where $ p'=\frac{p}{p-1} $ is the dual exponent of $ p $, and write $ \langle \cdot,\cdot\rangle: W^{-s,p'}(\Omega)\times W_{0}^{s,p}(\Omega)\rightarrow \mathbb{R} $ to designate a duality pairing. In $ W_{0}^{s,p}(\Omega) $, the Gagliardo seminorm is actually a norm, which is equivalent to $ \| \cdot\|_{W^{s,p}(\mathbb{R}^{N})} $ and throughout the text, we will denote by $ \|\cdot\|:=[\,\cdot\,]_{s,p} $.

The Hardy-Sobolev inequality (see \cite[Lemma 2.1]{K2})
\begin{equation*}
\left (\int_{\mathbb{R}^{N}}\frac{|u|^{p_{\alpha}^{*}}}{|x|^{\alpha}}dx\right )^{\frac{1}{p_{\alpha}^{*}}}\leq C(N,p,\alpha)\left (\int _{\mathbb{R}^{2N}}\frac{|u(x)-u(y)|^{p}}{|x-y|^{N+sp}}dxdy\right )^{\frac{1}{p}}
\end{equation*}
implies that the embedding 
\begin{equation*}
W_{0}^{s,p}(\Omega)\hookrightarrow L^{r}\left (\Omega,\frac{dx}{|x|^{\beta}}\right )
\end{equation*}
is continuous, for all $ r\in [1,p_{\beta}^{*}] $ with $ 0\leq \beta<ps $. Furthermore, this embedding is compact for all $ r\in [1,p_{\beta}^{*}) $ (see \cite[Lemma 2.3]{K2}). We denote by
\begin{equation}\label{EquabestC}
S_{\alpha}=\inf _{u\in W_{0}^{s,p}(\Omega)\setminus \{0\}}\frac{\|u\|^{p}}{|u|x|^{-\alpha /p}|_{p_{\alpha}^{*}}^{p}}
\end{equation}
the best constant corresponding to the fractional Hardy-Sobolev embedding when $ r=p_{\alpha}^{*} $.

For each $ u\in W_{0}^{s,p}(\Omega) $, the fractional p-Laplacian operator $ (-\Delta_{p})^{s}u $ can be seen, in weak sense, as a uniquely defined element of $ W^{-s,p'}(\Omega) $ by
\begin{equation*}
\langle (-\Delta_{p})^{s}u,v\rangle=\int_{\mathbb{R}^{2N}}\frac{|u(x)-u(y)|^{p-2}(u(x)-u(y))(v(x)-v(y))}{|x-y|^{N+sp}}dx dy,\;\;\; \forall v\in W_{0}^{s,p}(\Omega). 
\end{equation*}

\begin{definition}
We say that a function $ u\in W_{0}^{s,p}(\Omega) $ is a weak solution of the problem \eqref{Equa1} if
\begin{equation*}
M\left (\|u\|^{p}\right )\langle(-\Delta_{p})^{s}u,v\rangle=\lambda \int_{\Omega}f(x,u)v dx+\int_{\Omega}\frac{|u|^{q-2}u v}{|x|^{\alpha}} dx ,
\end{equation*}
for all $ v \in W_{0}^{s,p}(\Omega) $.
\end{definition}

If $ f $ satisfies \hyperlink{2}{$ (f_{1}) $} or \hyperlink{F1}{$ (F_{1}) $}, then energy functional $ I_{\lambda}:W_{0}^{s,p}(\Omega)\rightarrow \mathbb{R} $ given by
\begin{equation*}
I_{\lambda}(u)= \frac{1}{p}\widehat{M}(\|u\|^{p})- \lambda \int _{\Omega}F(x,u)dx- \frac{1}{q}\int _{\Omega}\frac{|u|^{q}}{|x|^{\alpha}}dx
\end{equation*}
is well defined in $ W_{0}^{s,p}(\Omega) $, with $ \widehat{M}(t)=\int _{0}^{t}M(\tau)d\tau $. Moreover, $ I_{\lambda}\in C^{1}(W_{0}^{s,p}(\Omega),\mathbb{R}) $ and its derivative is given by
\begin{eqnarray*}
\langle I_{\lambda}'(u),v\rangle&=&M\left (\|u\|^{p}\right )\langle (-\Delta_{p})^{s}u,v\rangle-\lambda \int _{\Omega}f(x,u)v dx- \int _{\Omega} \frac{|u|^{q-2}uv}{|x|^{\alpha}} dx,\; \forall v\in W_{0}^{s,p}(\Omega).
\end{eqnarray*}
Consequently, the weak solutions of problem \eqref{Equa1} are the critical points of functional $ I_{\lambda} $. 
\begin{definition}
We say that $ u\in W_{0}^{s,p}(\Omega)\setminus\{0\} $ is a ground state solution of \eqref{Equa1} if $ u $ is a weak solution of \eqref{Equa1} and
\begin{equation*}
I_{\lambda}(u)=\inf \{I_{\lambda}(v): I'_{\lambda}(v)=0, v\neq 0 \},
\end{equation*}
and $ w\in W_{0}^{s,p}(\Omega) $ is a least energy sign-changing solution of \eqref{Equa1} if $ w $ is a weak solution of \eqref{Equa1}, with $ w^{\pm}\neq 0 $ and
\begin{equation*}
I_{\lambda}(w)=\inf \{I_{\lambda}(v): I'_{\lambda}(v)=0, v^{+}\neq 0, v^{-}\neq 0\}.
\end{equation*} 
\end{definition}

\begin{definition}
Let W be a real Banach space and $ J\in C^{1}(W,\mathbb{R}) $. We say that $ \{u_{n}\}\subset W $ is a $ (PS)_{c} $ sequence for $ J $ if $ J(u_{n})\rightarrow c $ and $ J'(u_{n})\rightarrow 0 $, as $ n\rightarrow \infty $. Also, we say that $ J $ satisfies the $ (PS)_{c} $ condition if any $ (PS)_{c} $ sequence has a convergent subsequence. 
\end{definition}

Since we are also interested in to obtain sign-changing solutions, for each $ w\in W_{0}^{s,p}(\Omega) $, we consider $ \Omega_{\pm}=\operatorname{supp} (w^{\pm}) $ and in this way, we can write explicitly the decomposition
\begin{equation*}
\langle (-\Delta_{p})^{s}w,w^{\pm}\rangle = A^{\pm}(w)+B^{\pm}(w),
\end{equation*}
where
\begin{equation*}
A^{\pm}(w):=\int _{\Omega_{\mp}^{c}\times \Omega _{\mp}^{c}}\frac{|w^{\pm}(x)-w^{\pm}(y)|^{p}}{|x-y|^{N+sp}}dxdy,\;\;\;B^{\pm}(w):=2\int _{\Omega _{+}\times \Omega _{-}}\frac{|w^{+}(x)-w^{-}(y)|^{p-1}(w^{\pm}(x)-w^{\pm}(y))}{|x-y|^{N+sp}}dxdy
\end{equation*}
which implies $ \|w\|^{p}=A(w)+B(w) $ with $ A(w):=A^{+}(w)+A^{-}(w) $ and $ B(w):=B^{+}(w)+B^{-}(w) $.

Note that the function $ \phi_{p}(t)=|t|^{p-2}t $ satisfies the inequality 
\begin{equation*}
\phi_{p}(a^{\pm}-b^{\pm})(a^{\pm}-b^{\pm})\leq \phi_{p}(a-b)(a^{\pm}-b^{\pm})\leq \phi_{p}(a-b)(a-b),
\end{equation*}
for all $ a,b\in \mathbb{R} $. Therefore, we have
\begin{equation}\label{Equa62}
\langle (-\Delta_{p})^{s}w^{\pm},w^{\pm}\rangle \leq \langle (\Delta_{p})^{s}w,w^{\pm}\rangle \leq \langle (\Delta_{p})^{s}w,w\rangle,
\end{equation}
for all $ w\in W_{0}^{s,p}(\Omega) $ with strict inequality when $ w $ is sign-changing.

From \hyperlink{M1}{$ (M_{1}) $}, we obtain the following inequality about the Kirchhoff function $ M $:
\begin{equation}
\widehat{M}(t+\theta)\geq \widehat{M}(t)+\widehat{M}(\theta), \;\forall t,\theta \geq 0.
\end{equation}
Then, the energy functional $ I_{\lambda} $ satisfies the inequalities
\begin{equation}\label{Equa55}
I_{\lambda}(w)\geq I_{\lambda}(w^{+})+I_{\lambda}(w^{-}),
\end{equation}
\begin{equation*}
\langle I'_{\lambda}(w),w^{+}\rangle\geq \langle I'_{\lambda}(w^{+}),w^{+}\rangle\; \operatorname{and}\; \langle I'_{\lambda}(w),w^{-}\rangle\geq \langle I'_{\lambda}(w^{-}),w^{-}\rangle
\end{equation*}
for all $ w\in W_{0}^{s,p}(\Omega) $ with strict inequality when $ w $ is sign-changing.

In what follows, whenever it is necessary to emphasize the dependence of exponent $ q $, we will denote the energy functional by $ I_{\lambda,q} $ and the Nehari sets associated with it by $ \mathcal{N}_{\lambda,q} $, $ \mathcal{N}_{\lambda,q}^{+} $, $ \mathcal{N}_{\lambda,q}^{-} $ and $ \mathcal{M}_{\lambda,q} $. Furthermore, whenever a functional $ J:W_{0}^{s,p}(\Omega)\rightarrow \mathbb{R} $ is a bounded from below in $ X\subset W_{0}^{s,p}(\Omega) $, we denote its infimum on $ X $ by
\begin{equation*}
c_{X}:=\inf_{u\in X}J(u).
\end{equation*}

\subsection{Compactness results}
\begin{lemma}\label{Lemm1}
 Let $ \{u_{n}\}\subset W_{0}^{s,p}(\Omega) $ be a bounded sequence with $ u_{n}\rightarrow u $ a.e. $ x\in \Omega $, for some $ u\in W_{0}^{s,p}(\Omega) $.	If $ f $ satisfies \hyperlink{1}{$ (f_{1}) $} or \hyperlink{F1}{$ (F_{1}) $}, then
 \begin{equation*}
 	\int _{\Omega} f(x,u_{n})u_{n}dx \rightarrow \int _{\Omega} f(x,u)u dx\;\;\operatorname{and}\;\; \int _{\Omega} F(x,u_{n})dx\rightarrow \int _{\Omega} F(x,u)dx.
 \end{equation*}  
\end{lemma}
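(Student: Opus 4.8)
The plan is to derive both convergences from Vitali's convergence theorem applied on the finite-measure space $\Omega$, with the integrands $g_n:=f(\cdot,u_n)u_n$ and $F(\cdot,u_n)$. First I would collect the ingredients. Since $f$ is Carath\'eodory and $u_n\to u$ a.e., we get $g_n(x)\to f(x,u(x))u(x)=:g(x)$ a.e. in $\Omega$, and likewise $F(x,u_n)\to F(x,u)$ a.e. Since $\{u_n\}$ is bounded in $W_0^{s,p}(\Omega)$, the continuous Hardy--Sobolev embedding with $\beta=0$ gives $K:=\sup_n|u_n|_{p^*}<\infty$. Both $(f_1)$ and $(F_1)$ can be written in the single form $|f(x,t)|\le \epsilon(|t|^{a-1}+|t|^{p^*-1})+C_{\epsilon,r}|t|^{r-1}$ with $a\in\{\gamma,p\}\subset[p,p^*)$ (recall $\gamma<q\le p_\alpha^*\le p^*$) and $r\in(p,p^*)$, so that $|f(x,t)t|\le\epsilon(|t|^a+|t|^{p^*})+C_{\epsilon,r}|t|^r$, and after integrating and relabelling the constant, $|F(x,t)|\le\epsilon(|t|^a+|t|^{p^*})+C_{\epsilon,r}'|t|^r$.

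Because $\Omega$ has finite measure and $g_n\to g$ a.e., it suffices by Vitali's theorem to prove that $\{g_n\}$ is uniformly integrable; the convergence $\int_\Omega g_n\to\int_\Omega g$ then follows, and the same will hold for $F$. To this end I would estimate, for an arbitrary measurable $E\subseteq\Omega$, using the growth bound together with H\"older's inequality (exponents $p^*/a$ and $p^*/r$) and $\int_E|u_n|^{p^*}\le K^{p^*}$,
\begin{equation*}
\int_E|g_n|\,dx\le \epsilon K^{a}\,|E|^{1-\frac{a}{p^*}}+\epsilon K^{p^*}+C_{\epsilon,r}K^{r}\,|E|^{1-\frac{r}{p^*}}.
\end{equation*}
The purpose of this splitting is that the subcritical contributions (exponents $a,r<p^*$) are small whenever $|E|$ is small, uniformly in $n$, whereas the genuinely critical piece $\int_E|u_n|^{p^*}$, which need not vanish as $|E|\to 0$, carries the free factor $\epsilon$ and can be pushed below any threshold by choosing $\epsilon$.

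To conclude, given $\eta>0$ I would first fix $\epsilon$ so small that $\epsilon K^{p^*}<\eta/3$ and $\epsilon K^{a}|\Omega|^{1-a/p^*}<\eta/3$; with $\epsilon$, and hence $C_{\epsilon,r}$, now frozen, choose $\delta>0$ with $C_{\epsilon,r}K^{r}\delta^{1-r/p^*}<\eta/3$. Then $|E|<\delta$ forces $\int_E|g_n|\,dx<\eta$ for every $n$, which is exactly uniform integrability, so Vitali's theorem gives $g\in L^1(\Omega)$ and $g_n\to g$ in $L^1(\Omega)$, whence the integrals converge; the argument for $F(\cdot,u_n)$ is identical using the integrated bound. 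The main obstacle, and the reason the ordinary dominated convergence theorem does not apply, is the presence of the critical exponent $p^*$ in the growth hypothesis: the family $\{|u_n|^{p^*}\}$ is only bounded in $L^1$, not equi-absolutely-continuous, so no fixed $L^1$ majorant is available. The crucial point is to exploit the arbitrarily small coefficient $\epsilon$ multiplying the critical term in $(f_1)$ and $(F_1)$, which decouples the critical contribution from the a.e.\ convergence and makes Vitali's criterion applicable.
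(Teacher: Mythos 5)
Your proof is correct, and it takes a genuinely different route from the paper's. The paper argues by duality and splitting: it first observes that $\{f(x,u_{n})\}$ is bounded in a Lebesgue space dual to $L^{p^{*}}$, so that $f(x,u_{n})\rightharpoonup f(x,u)$ and hence $\int_{\Omega}f(x,u_{n})u\,dx\to\int_{\Omega}f(x,u)u\,dx$; it then writes $\int_{\Omega}(f(x,u_{n})u_{n}-f(x,u)u)\,dx$ as $\int_{\Omega}f(x,u_{n})(u_{n}-u)\,dx$ plus $\int_{\Omega}(f(x,u_{n})-f(x,u))u\,dx$, and controls the first piece by H\"older, where the subcritical factors $|u_{n}-u|_{\gamma}$, $|u_{n}-u|_{r}$ tend to zero (strong convergence in subcritical Lebesgue spaces, i.e.\ compactness of the embedding) and the critical piece $\epsilon|u_{n}|_{p^{*}}|u_{n}-u|_{p^{*}}$ is merely bounded by $C\epsilon$; the statement for $F$ is then reduced to the one for $f$ through the identity $F(x,u_{n})-F(x,u)=\int_{0}^{1}f(x,tu_{n}+(1-t)u)(u_{n}-u)\,dt$. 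You instead prove equi-integrability of $f(\cdot,u_{n})u_{n}$ and $F(\cdot,u_{n})$ directly and invoke Vitali's theorem on the finite-measure set $\Omega$. Both proofs turn on exactly the same structural feature of \hyperlink{1}{$(f_{1})$}/\hyperlink{F1}{$(F_{1})$}, namely that the critical-growth term carries an arbitrarily small coefficient $\epsilon$, so it only needs to be bounded, not convergent; but your argument is more elementary and more economical: it needs neither the weak-convergence step (which the paper states with the dubious exponent $L^{p^{*}-1}$ in place of the dual exponent) nor the compact embedding into subcritical Lebesgue spaces, and it disposes of both limits with a single argument instead of a separate fundamental-theorem-of-calculus identity for $F$. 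What the paper's route buys is some extra information along the way (weak convergence of $f(x,u_{n})$ and strong subcritical convergence of $u_{n}$), in the spirit of estimates reused later in the paper; what yours buys is self-containedness and robustness, since only H\"older's inequality, the boundedness of $|u_{n}|_{p^{*}}$, and Vitali's theorem are used.
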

\begin{proof}
	We will prove the result assuming \hyperlink{1}{$ (f_{1}) $}, because the proof is analogous assuming \hyperlink{F1}{$ (F_{1}) $}. Since $ \{u_{n}\} $ is bounded in $ W_{0}^{s,p}(\Omega) $, by \hyperlink{1}{$ (f_{1}) $} we have that $ \{f(x,u_{n})\} $ is bounded in $ L^{p^{*}-1}(\Omega) $ and, thus, there exists $ v\in L^{p^{*}-1}(\Omega) $ such that
	$ f(x,u_{n})\rightharpoonup v $ in $ L^{p^{*}-1}(\Omega) $. Provided that $ f(x,u_{n}) \rightarrow f(x,u) $ a.e. $ x\in \Omega $, it follows that $ v=f(x,u) $ which implies 
	\begin{equation}\label{Eq5}
		\int _{\Omega} f(x,u_{n})u dx \rightarrow \int _{\Omega} f(x,u)u dx.
	\end{equation}
	Given $ \epsilon>0 $, from \hyperlink{2}{$ (f_{1}) $} and H\"older inequality we obtain
\begin{eqnarray*}
	\left |\int _{\Omega}f(x,u_{n})u_{n}-f(x,u)udx\right|&\leq& \int _{\Omega}|f(x,u_{n})||u_{n}-u|dx+\left |\int_{\Omega}(f(x,u_{n})-f(x,u))u dx\right | \\
	&\leq& \epsilon |u_{n}|_{\gamma}|u_{n}-u|_{\gamma}+\epsilon |u_{n}|_{p^{*}}|u_{n}-u|_{p^{*}}+ C_{\epsilon,r}|u_{n}|_{r}|u_{n}-u|_{r}\\
&&	+\int_{\Omega}(f(x,u_{n})-f(x,u))u dx\\
&\leq& C( |u_{n}-u|_{\gamma}+|u_{n}-u|_{r})+C\epsilon + \int_{\Omega}(f(x,u_{n})-f(x,u))u dx.
\end{eqnarray*}
Since $ \epsilon $ is arbitrary, $ \gamma ,r \in (1,p^{*}) $ and \eqref{Eq5} holds, we obtain the first convergence. By similar way, using \hyperlink{1}{$ (f_{1}) $} again and the identity 
\begin{equation*}
	F(x,u_{n})-F(x,u)=\int_{0}^{1}(f(x,tu_{n}+(1-t)u))(u_{n}-u)dt, \; \operatorname{for}\;\operatorname{a.e.} x\in \Omega,
\end{equation*}
one can check the second convergence.  
\end{proof}

Next we will present three results that will be important to get strong convergence of sequences weakly convergent in subsequent results. Their proofs can be found in \cite{K2}. 

\begin{lemma}\label{lem1}
The operator $ (-\Delta_{p})^{s}:W_{0}^{s,p}(\Omega)\rightarrow W^{-s,p'}(\Omega) $ is weak-to-weak continuous.
\end{lemma}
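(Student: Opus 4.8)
The plan is to represent the operator through the natural isometric lifting of $W_{0}^{s,p}(\Omega)$ into $L^{p}(\mathbb{R}^{2N})$ and thereby reduce the statement to a standard weak-convergence fact in Lebesgue spaces. For $w\in W_{0}^{s,p}(\Omega)$ I would set
\[
U_{w}(x,y):=\frac{w(x)-w(y)}{|x-y|^{(N+sp)/p}},
\]
so that $w\mapsto U_{w}$ is linear with $\|U_{w}\|_{L^{p}(\mathbb{R}^{2N})}=\|w\|$. Writing $\phi_{p}(t)=|t|^{p-2}t$ as before, the duality pairing becomes $\langle (-\Delta_{p})^{s}w,v\rangle=\int_{\mathbb{R}^{2N}}\phi_{p}(U_{w})\,U_{v}\,dx\,dy$, and since $(p-1)p'=p$ one has $\|\phi_{p}(U_{w})\|_{L^{p'}(\mathbb{R}^{2N})}^{p'}=\|w\|^{p}$, so $\phi_{p}(U_{w})\in L^{p'}(\mathbb{R}^{2N})$. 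Because $W_{0}^{s,p}(\Omega)$ is reflexive, weak convergence in the dual $W^{-s,p'}(\Omega)$ is tested against $W_{0}^{s,p}(\Omega)$; hence it suffices to show that $u_{n}\rightharpoonup u$ in $W_{0}^{s,p}(\Omega)$ implies $\langle (-\Delta_{p})^{s}u_{n},v\rangle\to\langle (-\Delta_{p})^{s}u,v\rangle$ for every fixed $v\in W_{0}^{s,p}(\Omega)$.

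The core of the argument is to prove that $\phi_{p}(U_{u_{n}})\rightharpoonup\phi_{p}(U_{u})$ weakly in $L^{p'}(\mathbb{R}^{2N})$, which I would establish by a subsequence argument: since the candidate limit is always $\phi_{p}(U_{u})$, it is enough to show that every subsequence of $\{u_{n}\}$ admits a further subsequence along which this weak convergence holds. Fix such a subsequence. As $u_{n}\rightharpoonup u$, the sequence $\{u_{n}\}$ is bounded, hence $\{\phi_{p}(U_{u_{n}})\}$ is bounded in $L^{p'}(\mathbb{R}^{2N})$. By the compact embedding $W_{0}^{s,p}(\Omega)\hookrightarrow L^{r}(\Omega)$ for $r<p^{*}$ recalled above, $u_{n}\to u$ strongly in some such $L^{r}(\Omega)$, so a further subsequence converges a.e. in $\mathbb{R}^{N}$ (functions vanish off $\Omega$). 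A Fubini argument upgrades this to $U_{u_{n}}\to U_{u}$ a.e. in $\mathbb{R}^{2N}$: if $u_{n}\to u$ off a null set $E\subset\mathbb{R}^{N}$, then pointwise convergence of the difference quotients holds off $(E\times\mathbb{R}^{N})\cup(\mathbb{R}^{N}\times E)$, which is null in $\mathbb{R}^{2N}$. Continuity of $\phi_{p}$ then gives $\phi_{p}(U_{u_{n}})\to\phi_{p}(U_{u})$ a.e.

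To close, I would invoke the standard fact that in $L^{p'}(\mathbb{R}^{2N})$ with $1<p'<\infty$ a sequence that is norm-bounded and convergent a.e. converges weakly to its a.e. limit; this yields $\phi_{p}(U_{u_{n}})\rightharpoonup\phi_{p}(U_{u})$ in $L^{p'}(\mathbb{R}^{2N})$. Since $U_{v}\in L^{p}(\mathbb{R}^{2N})$, testing against it gives
\[
\langle (-\Delta_{p})^{s}u_{n},v\rangle=\int_{\mathbb{R}^{2N}}\phi_{p}(U_{u_{n}})\,U_{v}\,dx\,dy\longrightarrow\int_{\mathbb{R}^{2N}}\phi_{p}(U_{u})\,U_{v}\,dx\,dy=\langle (-\Delta_{p})^{s}u,v\rangle
\]
along the subsequence; as the limit is independent of the chosen subsequence, the whole sequence converges, which is the claimed weak-to-weak continuity. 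I expect the only genuinely delicate step to be the identification of the weak limit, i.e. combining the a.e. convergence $U_{u_{n}}\to U_{u}$ with the $L^{p'}$-bound to pass the nonlinearity $\phi_{p}$ to the limit weakly; the indispensable inputs there are the compact embedding (to promote weak convergence to a.e. convergence) and the boundedness-plus-a.e. lemma in $L^{p'}$. The isometric reduction and the final testing are routine.
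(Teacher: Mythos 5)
Your proposal is correct and follows essentially the same route as the paper's source: the paper does not prove this lemma itself but defers to \cite{K2}, where the argument is precisely yours --- lift to $L^{p'}(\mathbb{R}^{2N})$ via $w\mapsto \phi_{p}(U_{w})$, use the $L^{p'}$ bound together with a.e.\ convergence (obtained from the compact embedding and a subsequence extraction) to conclude $\phi_{p}(U_{u_{n}})\rightharpoonup\phi_{p}(U_{u})$ in $L^{p'}(\mathbb{R}^{2N})$, and then test against $U_{v}\in L^{p}(\mathbb{R}^{2N})$, with the subsequence principle giving convergence of the full sequence. The only cosmetic remark is that the exceptional set for pointwise convergence of $U_{u_{n}}$ should also exclude the diagonal $\{x=y\}$, which is null in $\mathbb{R}^{2N}$ and so does not affect the argument.
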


\begin{lemma}\label{lem2}
Let $ \{u_{n}\}\subset W_{0}^{s,p}(\Omega) $ be a bounded sequence with $ u_{n}(x)\rightarrow u(x) $ a.e. $ x\in \Omega $, for some $ u\in W_{0}^{s,p}(\Omega) $ and $ p<q_{n}\leq p_{\alpha}^{*} $ with $ q_{n}\rightarrow p_{\alpha}^{*} $ as $ n\rightarrow \infty $. Then, up to a subsequence, we have
\begin{equation*}
 \int _{\Omega} \frac{|u_{n}|^{q_{n}-2}u_{n}}{|x|^{\alpha}}v dx=\int _{\Omega}\frac{|u|^{p_{\alpha}^{*}-2}u}{|x|^{\alpha}}v dx+o_{n}(1), \;\forall v\in W_{0}^{s,p}(\Omega).
\end{equation*}
\end{lemma}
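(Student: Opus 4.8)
The plan is to fix an arbitrary $v\in W_0^{s,p}(\Omega)$ and to establish the scalar convergence $\int_\Omega \frac{|u_n|^{q_n-2}u_n v}{|x|^\alpha}\,dx\to\int_\Omega \frac{|u|^{p_\alpha^*-2}u v}{|x|^\alpha}\,dx$ by means of Vitali's convergence theorem on the finite measure space $(\Omega,d\nu)$, where $d\nu:=|x|^{-\alpha}\,dx$; note that $\nu(\Omega)<\infty$ because $\alpha<ps<N$. The first step is to record the pointwise convergence of the integrands: the map $(t,r)\mapsto|t|^{r-2}t$, extended continuously by $0$ at $t=0$, is continuous on $\mathbb{R}\times(1,\infty)$, so from $u_n\to u$ a.e. and $q_n\to p_\alpha^*>1$ one obtains $|u_n|^{q_n-2}u_n\,v\to|u|^{p_\alpha^*-2}u\,v$ for a.e. $x\in\Omega$.

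The second step is to produce the uniform integrability. The Hardy--Sobolev inequality together with the boundedness of $\{u_n\}$ in $W_0^{s,p}(\Omega)$ gives a uniform bound $\|u_n\|_{L^{p_\alpha^*}(\Omega,d\nu)}\le C$; since $\nu(\Omega)<\infty$ and $q_n\le p_\alpha^*$, H\"older yields $\int_\Omega|u_n|^{q_n}\,d\nu\le \nu(\Omega)^{1-q_n/p_\alpha^*}C^{q_n}$, a bound uniform in $n$ because $q_n$ ranges in the bounded interval $(p,p_\alpha^*]$. Then for any measurable $E\subset\Omega$, H\"older with the conjugate exponents $\tfrac{q_n}{q_n-1}$ and $q_n$ gives
\begin{equation*}
\left|\int_E|u_n|^{q_n-2}u_n\,v\,d\nu\right|\le\left(\int_E|u_n|^{q_n}\,d\nu\right)^{\frac{q_n-1}{q_n}}\left(\int_E|v|^{q_n}\,d\nu\right)^{\frac{1}{q_n}}.
\end{equation*}
The first factor is uniformly bounded by the previous estimate, while the elementary bound $|v|^{q_n}\le 1+|v|^{p_\alpha^*}$ (valid since $q_n\le p_\alpha^*$) shows $\int_E|v|^{q_n}\,d\nu\le\int_E(1+|v|^{p_\alpha^*})\,d\nu$, whose right-hand side tends to $0$ as $\nu(E)\to 0$ uniformly in $n$, because $1+|v|^{p_\alpha^*}\in L^1(\Omega,d\nu)$. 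This is precisely the equi-integrability of the family $\{|u_n|^{q_n-2}u_n\,v\}$ on $(\Omega,d\nu)$.

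Finally, pointwise convergence together with equi-integrability allows me to invoke Vitali's theorem and conclude the desired convergence for the fixed $v$; since $v$ is arbitrary this is the statement, and the phrase ``up to a subsequence'' is only relevant if one starts from a merely weakly convergent sequence and first extracts an a.e. convergent subsequence. I expect the main obstacle to be the equi-integrability step: the exponent is variable and the total homogeneity $q_n$ approaches the critical value $p_\alpha^*$, so one must rule out any concentration phenomenon. The feature that makes it work is that the nonlinearity is tested against the fixed function $v$, which lets all the smallness be transferred onto the single $\nu$-integrable majorant $1+|v|^{p_\alpha^*}$, while the variable H\"older exponents $\tfrac{q_n}{q_n-1},q_n$ stay in a compact range and the singular weight $|x|^{-\alpha}$ contributes only the finite measure $\nu$.
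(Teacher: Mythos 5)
Your proposal is correct. A point of comparison worth noting: the paper does not actually prove this lemma at all --- it is one of the three compactness results quoted from \cite{K2} (``Their proofs can be found in [K2]''), so your argument should be measured against the standard proof there, which it essentially reproduces in unpacked form. The route in \cite{K2} is to observe that $h_n:=|u_n|^{q_n-2}u_n$ satisfies $|h_n|^{(p_\alpha^*)'}\le 1+|u_n|^{p_\alpha^*}$, hence $\{h_n\}$ is bounded in $L^{(p_\alpha^*)'}\bigl(\Omega,|x|^{-\alpha}dx\bigr)$ and converges a.e., which by the classical ``bounded in $L^r$, $r>1$, plus a.e.\ convergence implies weak convergence'' lemma gives weak convergence against every $v\in L^{p_\alpha^*}\bigl(\Omega,|x|^{-\alpha}dx\bigr)\supset W_0^{s,p}(\Omega)$; your Vitali argument is precisely the proof of that classical lemma, specialized to a fixed test function $v$. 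Both proofs hinge on the same two facts you isolate: the weighted measure $|x|^{-\alpha}dx$ is finite, and the Hardy--Sobolev embedding gives the uniform $L^{p_\alpha^*}$ bound. You also correctly handle the only genuinely delicate point created by the variable exponent --- the smallness $\bigl(\int_E(1+|v|^{p_\alpha^*})\,d\nu\bigr)^{1/q_n}$ must be uniform in $n$, which works because $1/q_n\ge 1/p_\alpha^*$ keeps the powers in a compact range --- and your observation that no subsequence extraction is needed (a.e.\ convergence being already hypothesized) is accurate. The dual-space formulation in \cite{K2} buys a statement independent of the test function, which is marginally more convenient when the lemma is invoked later with varying $v$; your version buys self-containedness and avoids quoting the weak-convergence lemma as a black box.
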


For $ u\in L_{\operatorname{loc}}^{1}(\Omega) $, define the function
\begin{equation*}
|D^{s}u|^{p}(x):=\int _{\mathbb{R}^{N}}\frac{|u(x)-u(y)|^{p}}{|x-y|^{N+sp}}dy.
\end{equation*}

\begin{lemma}[Concentration compactness principle with variable exponents]\label{lem4}
Let $ 0\leq \alpha\leq ps<N $, $ \Omega $ be a bounded open of $ \mathbb{R}^{N} $ containing $ 0 $ and $ u_{n}\rightharpoonup u $ in $ W_{0}^{s,p}(\Omega) $. Given $ p<q_{n}\leq p_{\alpha}^{*}) $ with $ q_{n}\rightarrow p_{\alpha}^{*} $, there exist two measures $ \nu,\sigma $ and an at most countable set $ \{x_{j}\}_{j\in \mathcal{J}}\subset \overline{\Omega} $ such that, up to subsequence,
\begin{eqnarray}
  |D^{s}u_{n}|^{p}\rightharpoonup^{*}\sigma,\;\;\;\;\frac{|u_{n}|^{q_{n}}}{|x|^{\alpha}}\rightharpoonup^{*}\nu,\label{Eq1}\\
  \sigma \geq |D^{s}u|^{p}+\sum_{j\in \mathcal{J}} \sigma_{j}\delta_{x_{j}}, \;\;\;\ \sigma_{j}=\sigma (\{x_{j}\}),\label{Eq2}\\
  \nu=\frac{|u|^{p_{\alpha}^{*}}}{|x|^{\alpha}}+\sum _{j\in \mathcal{J}}\nu_{j}\delta_{x_{j}}, \;\;\;\nu_{j}=\nu(\{x_{j}\}),\label{Eq3}\\
  \sigma_{j}\geq S_{\alpha}\nu_{j}^{\frac{p}{p_{\alpha}^{*}}}, \;\;\forall j\in \mathcal{J}.\label{Eq4}
\end{eqnarray}
Moreover, if $ \alpha>0 $, then $ \{x_{j}\}_{j\in \mathcal{J}}=\{0\} $.
\end{lemma}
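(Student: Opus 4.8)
The plan is to adapt the classical concentration–compactness principle of P.-L.\ Lions to the present nonlocal, weighted, variable-exponent setting, following the scheme for the fractional $p$-Laplacian. Since $\{u_n\}$ is bounded in $W_0^{s,p}(\Omega)$, the nonnegative Radon measures $|D^su_n|^p\,dx$ and $\frac{|u_n|^{q_n}}{|x|^\alpha}\,dx$ have uniformly bounded masses on $\overline{\Omega}$ (the second one via the Hardy–Sobolev inequality and the boundedness of $\Omega$, which makes $|x|^{-\alpha}\,dx$ finite away from the behaviour at $0$). Hence, by Banach–Alaoglu for bounded Radon measures, after passing to a subsequence they converge weakly-$*$ to finite nonnegative measures $\sigma$ and $\nu$, which is exactly \eqref{Eq1}. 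The core of the argument is then to establish, for every $\phi\in C_c^\infty(\mathbb{R}^N)$, the limiting Hardy–Sobolev inequality
\[
\left(\int|\phi|^{p_\alpha^*}\,d\nu\right)^{1/p_\alpha^*}\le S_\alpha^{-1/p}\left(\int|\phi|^{p}\,d\sigma\right)^{1/p}.
\]
To obtain it I would apply the best-constant inequality \eqref{EquabestC} to the test function $\phi u_n$, estimate the Gagliardo norm $\|\phi u_n\|$ by $\bigl(\int|\phi|^p|D^su_n|^p\,dx\bigr)^{1/p}$ up to remainder terms, and pass to the limit $n\to\infty$; the remainders are controlled using the Lipschitz regularity of $\phi$ together with the local strong convergence $u_n\to u$ in $L^p_{\mathrm{loc}}(\Omega)$ coming from the compact embedding.

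To reach the structure of $\nu$ and $\sigma$, I would first split off the absolutely continuous part. Setting $v_n:=u_n-u$, so that $v_n\rightharpoonup 0$, the a.e.\ convergence together with the boundedness of $\{u_n\}$ and a Brezis–Lieb argument separates $\frac{|u|^{p_\alpha^*}}{|x|^\alpha}$ from $\nu$, reducing the problem to the weak-$*$ limits $\tilde\nu,\tilde\sigma$ associated to $\{v_n\}$. Here lies the point genuinely special to this statement: the exponent $q_n\to p_\alpha^*$ is variable rather than fixed. To convert $\int|\phi|^{q_n}\,d\tilde\nu_n$ into $\int|\phi|^{p_\alpha^*}\,d\tilde\nu$ I would exploit the uniform bound of $\{v_n\}$ in $L^{p_\alpha^*}(\Omega,|x|^{-\alpha}dx)$ and the fact that, on the bounded domain $\Omega$, $q_n\to p_\alpha^*$ forces $|v_n|^{q_n}|x|^{-\alpha}$ and $|v_n|^{p_\alpha^*}|x|^{-\alpha}$ to share the same weak-$*$ limit (an interpolation/equi-integrability estimate); this is precisely Lemma \ref{lem2} applied at the level of measures.

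The reverse inequality, now valid for $\tilde\nu,\tilde\sigma$, is exactly the hypothesis of Lions' abstract lemma, which forces $\tilde\nu$ to be a countable sum of atoms $\sum_{j\in\mathcal{J}}\nu_j\delta_{x_j}$, giving \eqref{Eq3}, and simultaneously $\tilde\sigma\ge S_\alpha\sum_{j}\nu_j^{p/p_\alpha^*}\delta_{x_j}$. Combining this with the weak lower semicontinuity of $u\mapsto\int\phi\,|D^su|^p\,dx$ against nonnegative $\phi$, which yields $\sigma\ge|D^su|^p\,dx$, produces \eqref{Eq2} and the atom estimate \eqref{Eq4}. Finally, for the last assertion, when $\alpha>0$ I would localize the reverse inequality at a hypothetical atom $x_j\neq 0$: in a neighbourhood of $x_j$ the weight $|x|^{-\alpha}$ is bounded, so the relevant local embedding is into $L^{p_\alpha^*}_{\mathrm{loc}}$ with $p_\alpha^*<p^*$, which is compact; hence no mass can concentrate there, forcing $\{x_j\}_{j\in\mathcal{J}}=\{0\}$.

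\textbf{Main obstacle.} I expect the principal difficulty to be the combination carried out in the second paragraph: the nonlocal Gagliardo seminorm satisfies no clean Leibniz product rule, so controlling the cross terms in $\|\phi u_n\|$ uniformly in $n$ is already delicate, and doing so \emph{while} simultaneously letting the exponent $q_n\to p_\alpha^*$ and keeping the fixed best constant $S_\alpha$ is the technical crux. Everything downstream (the application of Lions' lemma and the localization for $\alpha>0$) is then fairly standard.
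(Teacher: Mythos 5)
First, a point of reference: the paper does not prove this lemma at all; it is quoted from \cite{K2} (``Their proofs can be found in \cite{K2}''), where the argument is the standard Lions concentration--compactness scheme adapted to the fractional Hardy--Sobolev setting. Your overall architecture --- weak-$*$ compactness of the two sequences of measures, a limiting reverse H\"older inequality obtained by testing \eqref{EquabestC} on $\phi u_{n}$ with nonlocal commutator estimates, a Brezis--Lieb splitting to isolate the diffuse part, Lions' lemma for the atomic structure, and the localization argument showing that for $\alpha>0$ no atom can sit at $x_{j}\neq 0$ (there the weight is bounded and $p_{\alpha}^{*}<p^{*}$, so the local embedding is compact) --- is exactly that scheme, and those parts of your outline are sound.

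There is, however, a genuine flaw in the one step that is special to this lemma, namely your treatment of the variable exponent. It is \emph{not} true that $|v_{n}|^{q_{n}}|x|^{-\alpha}dx$ and $|v_{n}|^{p_{\alpha}^{*}}|x|^{-\alpha}dx$ must share the same weak-$*$ limit. Take $\alpha=0$ for simplicity and let $v_{n}(x)=\rho_{n}^{-(N-sp)/p}v(x/\rho_{n})$ be critical bubbles with $\rho_{n}\rightarrow 0$, normalized so that $\int|v|^{p^{*}}dx=1$; a change of variables gives $\int|v_{n}|^{q_{n}}dx=\rho_{n}^{(N-sp)(p^{*}-q_{n})/p}\int|v|^{q_{n}}dx$, so whenever $(p^{*}-q_{n})|\log\rho_{n}|\rightarrow\infty$ the $q_{n}$-densities converge weakly-$*$ to $0$ while the $p^{*}$-densities converge to $\delta_{0}$. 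Thus equality of the limits fails precisely in the concentration regime the lemma is designed to describe. Relatedly, invoking Lemma \ref{lem2} ``at the level of measures'' is a mis-citation: that lemma pairs $|u_{n}|^{q_{n}-2}u_{n}|x|^{-\alpha}$ against a \emph{fixed} $v\in W_{0}^{s,p}(\Omega)$ and is blind to concentration; it cannot be upgraded to weak-$*$ convergence of the densities. The correct handling is one-sided and elementary: by H\"older with exponents $p_{\alpha}^{*}/q_{n}$ and its conjugate,
\begin{equation*}
\int_{\Omega}\frac{|\phi|^{q_{n}}|v_{n}|^{q_{n}}}{|x|^{\alpha}}dx\leq\left(\int_{\Omega}\frac{|\phi v_{n}|^{p_{\alpha}^{*}}}{|x|^{\alpha}}dx\right)^{\frac{q_{n}}{p_{\alpha}^{*}}}\left(\int_{\operatorname{supp}\phi}\frac{dx}{|x|^{\alpha}}\right)^{1-\frac{q_{n}}{p_{\alpha}^{*}}},
\end{equation*}
and since $q_{n}/p_{\alpha}^{*}\rightarrow 1$, $|x|^{-\alpha}\in L_{\operatorname{loc}}^{1}$, and $|\phi|^{q_{n}}\rightarrow|\phi|^{p_{\alpha}^{*}}$ uniformly on compact sets, taking the limsup and applying \eqref{EquabestC} to $\phi v_{n}$ yields the reverse H\"older inequality for $\tilde\nu$ against $\tilde\sigma$ with the constant $S_{\alpha}$. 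This one-sided inequality is all that Lions' lemma needs, and it is also the honest explanation of why \eqref{Eq4} is only an inequality: with $q_{n}$ strictly subcritical the atoms of $\nu$ can be strictly smaller than fixed-exponent concentration would produce, and may even vanish where $\sigma$ still has atoms (as in the counterexample above). With this substitution, the rest of your outline closes.
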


\subsection{Properties of the functions \textit{M} and \textit{f}}

Firstly, note that $ \overline{M} $ to be continuously differentiable in $ [0,\infty) $ means that $ \overline{M} $ has a continuous derivative in $ (0,\infty) $ which can be continuously extended to $ [0,\infty) $. Moreover, as immediate consequence, we have that $ M $ is continuously differentiable in $ (0,\infty) $. 

If $ M $ satisfies \hyperlink{M1}{$ (M_{1}) $} and \hyperlink{M2}{$ (M_{2}) $}, then
\begin{equation}\label{R1}
M(t)\leq M(1)+M(1)t^{\frac{\gamma-p}{p}},\; \forall t\geq 0,
\end{equation}  
\begin{equation}\label{R3}
	M(t)\geq M(1)\min\{1,t^{\frac{\gamma-p}{p}}\}, \; \forall t\geq 0.
\end{equation}
Moreover, we have that
\begin{equation}\label{R4}
	p \overline{M}'(t)t<\gamma\overline{M}(t), \; \forall t>0,
\end{equation}
or written in another way
\begin{equation}\label{R5}
pM'(t)t<(\gamma-p)M(t),\; \forall t>0 
\end{equation}
which implies
\begin{equation}\label{R2}
\frac{1}{p}\widehat{M}(t)-\frac{1}{\gamma}M(t)t \; \operatorname{is}\;\operatorname{increasing}\;\operatorname{and}\;\operatorname{positive}\;\operatorname{for}\; t>0.
\end{equation}

From assumption \hyperlink{0}{$ (f_{0}) $}, it follows that
\begin{equation}\label{propf3}
F(x,t)> 0, 
\end{equation}
for a.e. $ x \in \Omega $ and all $ t\neq 0 $. Also, by \hyperlink{3}{($ f_{3} $)}, we conclude that
\begin{equation}\label{propf1}
\partial _{t} \overline{f}(x,t)t - \gamma f(x,t)t\geq 0, 
\end{equation}
for a.e. $ x \in \Omega $ and all $ t\in \mathbb{R} $, which implies
\begin{equation}\label{propf2}
\frac{1}{\gamma}f(x,t)t-F(x,t)\geq 0 \operatorname{\;and\,it\, is \, increasing\, in \,} t>0 , \operatorname{\, decreasing\, in\,}  t<0.
\end{equation} 
From conditions \hyperlink{1}{$ (f_{1}) $} and \hyperlink{2}{$ (f_{2}) $}, we have that
\begin{equation}\label{Equa2.1}
	F(x,t)\geq C_{1}(x)|t|^{\mu}-C_{2}(x),
\end{equation}
for a.e. $ x\in \Omega $ and all $ t\in \mathbb{R} $, where $ C_{1}(x):=F(x,1)  $ and $ C_{2}(x):=\max_{t\in [0,1]}\left |F(x,t)-C_{1}(x)|t|^{\mu}\right | $ both in $ L^{\infty}(\Omega) $ with $ \mu>\gamma $ given in \hyperlink{2}{$ (f_{2}) $}.

Moreover, if $ f $ satisfies \hyperlink{F1}{($ F_{1} $)} and \hyperlink{F2}{$ (F_{2}) $}, we obtain the following properties:
\begin{equation}
\partial _{t} \overline{f}(x,t)t - \mu f(x,t)t\geq 0, 
\end{equation}
for a.e. $ x \in \Omega $ and all $ t\in \mathbb{R} $  with $ \mu>p $ given in \hyperlink{F2}{$ (F_{2}) $}, which implies
\begin{equation}
\frac{1}{\mu}f(x,t)t-F(x,t)\geq 0 \operatorname{\;and\,it\, is \, increasing\, in \,} t>0 , \operatorname{\, decreasing\, in\,}  t<0.
\end{equation} 

Also, there are non-negative functions $ C_{1},C_{2}\in L^{\infty}(\Omega) $ such that 
\begin{equation}
F(x,t)\geq C_{1}(x)|t|^{\mu}-C_{2}(x),
\end{equation}
for a.e. $ x\in \Omega $ and for all $ t\in \mathbb{R} $.

\section{The auxiliary problem}\label{Aux}

In the case where $ M $ is a Kirchhoff function non-degenerate, namely, $ M(0)=m_{0}>0 $, satisfying only condition \hyperlink{M1}{$ (M_{1}) $}, we shall make a truncation on $ M $ such that the new function obtained $ M_{a} $ still satisfies \hyperlink{M1}{($ M_{1} $)} and satisfies the condition 
\begin{itemize}
	\item[($ M'_{2} $)] \labeltarget{M'2} $ \frac{M(t)}{t^{\frac{\mu-p}{p}}} $ is decreasing in $ t>0 $,
\end{itemize} 
with $ \mu>p $ given in \hyperlink{F2}{$ (F_{2}) $}. This truncation is made following the ideas presented in \cite{NN}.

Suppose that $ M $ satisfies condition \hyperlink{M1}{$ (M_{1}) $} and $ M(0)=m_{0}>0 $. Then, there exists $ t_{0}>0 $ such that
\begin{equation}\label{Equa51}
m_{0}<M(t_{0})<\frac{1}{p}m_{0}\mu,
\end{equation}
for $ \mu >p $ introduced in \hyperlink{F2}{($ F_{2} $)}. Furthermore, since $ \overline{M}' $ is continuous in $ [0,\infty) $ and $ M(0)=m_{0} $, we have
\begin{equation*}
\lim_{t\rightarrow 0^{+}}\overline{M}'(t)=\overline{M}'(0)=m_{0},
\end{equation*}
which implies that there exists $ t_{1}>0 $ such that
\begin{equation*}
p \overline{M}'(t)<\mu m_{0}\leq \mu M(t),\;\;\; \forall t\in [0,t_{1})
\end{equation*}
and, therefore,
\begin{equation}\label{Equa52}
p M'(t)t< (\mu-p)M(t),\;\;\; \forall t\in (0,t_{1}).
\end{equation} 
Let $ t_{2}=\min\{t_{0}, t_{1}\} $ and $ a=M(t_{2}) $. Consider the $ C^{2} $ function
\begin{equation}\label{38}
m(t)=\left \{
\begin{array}{llcc}
t, & t\in [0,\delta]\\
\delta+\frac{2}{\pi}(t_{2}-\delta)\arctan\left(\frac{\pi}{2}\cdot \frac{t-\delta}{t_{2}-\delta}\right), & t\in[\delta,\infty),
\end{array}
\right .
\end{equation}
where $ \delta \in (0,t_{2}) $. Finally, set $ M_{a}:[0,\infty)\rightarrow [0,\infty) $ as
\begin{equation*}
M_{a}(t)=M(m(t)).
\end{equation*} 

From some calculations we infer that 
\begin{equation}\label{Equa54}
 m'(t)>0\;\;  \operatorname{and} \;\; m''(t)\leq 0, \;\; \forall  t>0, 
\end{equation}
which implies
\begin{equation}\label{Equa53}
m(t)\in (0,t_{2})\;\; \operatorname{and} \;\; m'(t)t\leq m(t),\;\; \forall t>0.
\end{equation} 
Then, from \eqref{Equa52}, \eqref{Equa54} and \eqref{Equa53}, it follows that $ M_{a} $ is continuously differentiable in $ (0,\infty) $ and satisfies 
\begin{equation}\label{Equa63}
p M_{a}'(t)t< (\mu-p)M_{a}(t),\;\;\; \forall t>0.
\end{equation}
Thus, $ M_{a} $ will also satisfy \hyperlink{M1}{($ M_{1} $)} and \hyperlink{M'2}{($M'_{2}$)}. Moreover, from \eqref{Equa51}, we have
\begin{equation}\label{Ma1}
m_{0}\leq M_{a}(t)\leq a<\frac{1}{p}m_{0}\mu,\;\forall t\geq 0.
\end{equation}
Also,
\begin{equation}\label{Ma2}
M_{a}(t)=M(t),\; \forall t\in [0,\delta].
\end{equation}
Putting $ \widehat{M}_{a}(t):=\int _{0}^{t} M_{a}(\tau)d\tau $, from \hyperlink{M'2}{$ (M'_{2}) $}, we get
\begin{equation}\label{M'a2}
\frac{1}{p}\widehat{M}_{a}(t)-\frac{1}{\mu}M_{a}(t)t \; \operatorname{is}\;\operatorname{increasing}\;\operatorname{and}\;\operatorname{positive}\;\operatorname{for}\; t>0 .
\end{equation}

From this truncation, we can consider the following auxiliary problem: 

\begin{equation}
\tag{$ P_{\lambda,a} $}
\left \{
\begin{array}{ll}
M_{a}\left (\int _{\mathbb{R}^{2N}}\frac{|u(x)-u(y)|^{p}}{|x-y|^{N+ps}}dx dy\right )(-\Delta_{p})^s u=\lambda f(x,u)+\frac{|u|^{q-2}u}{|x|^{\alpha}} & \operatorname{in}\, \Omega, \\
u=0  &\operatorname{in}\,  \mathbb{R}^{N} \setminus \Omega\\
\end{array}
\right.
\label{Equa40}
\end{equation}
and we denote by $ I_{\lambda,a} $ the energy functional, $ \mathcal{N}_{\lambda,a} $, $ \mathcal{N}_{\lambda,a}^{+} $, $ \mathcal{N}_{\lambda,a}^{-} $ and $ \mathcal{M}_{\lambda,a} $ the corresponding Nehari sets associated with $ I_{\lambda,a} $.

We obtain the following result about the auxiliary problem:
\begin{theorem}\label{Theo21}
	Suppose that $ M(0)=m_{0}>0 $, $ M $ satisfies \hyperlink{M1}{$ (M_{1}) $} and $ f $ satisfies \hyperlink{0}{$ (f_{0}) $}, \hyperlink{F1}{$ (F_{1}) $}, \hyperlink{F2}{$ (F_{2}) $}. Then, the following statements are hold true:
	\begin{itemize}
		\item[(1)] if $ p<q<p_{\alpha}^{*} $, we have that, for all $ \lambda >0 $,
		 \begin{itemize}
		 	\item[$ (\mathcal{C}_{\lambda,a}) $]\labeltarget{C2} problem \eqref{Equa40} has three solutions, $ u_{\lambda,a,1}\in \mathcal{N}_{\lambda,a}^{+} $, $ u_{\lambda,a,2}\in \mathcal{N}_{\lambda,a}^{-} $ and $ u_{\lambda,a,3}\in \mathcal{M}_{\lambda,a} $ such that
		 	\begin{eqnarray*}
		 		I_{\lambda,a}(u_{\lambda,a,1})=c_{\mathcal{N}_{\lambda,a}^{+}},\;\;I_{\lambda,a}(u_{\lambda,a,2})=c_{\mathcal{N}_{\lambda,a}^{-}},\;\; I_{\lambda,a}(u_{\lambda,a,3})=c_{\mathcal{M}_{\lambda,a}}
		 	\end{eqnarray*}
	 	satisfying $ c_{\mathcal{M}_{\lambda,a}}>c_{\mathcal{N}_{\lambda,a}^{+}}+c_{\mathcal{N}_{\lambda,a}^{-}} $ and $ c_{\mathcal{N}_{\lambda,a}}=\min\{ c_{\mathcal{N}_{\lambda,a}^{+}}, c_{\mathcal{N}_{\lambda,a}^{-}}\} $.
		 \end{itemize}
	 	\item[(2)] if $ q=p_{\alpha}^{*} $, then there exists $ \overline{\lambda}>0 $ such that \hyperlink{C2}{$ (\mathcal{C}_{\lambda,a}) $} holds, for all $ \lambda \geq \overline{\lambda} $.
	\end{itemize}
\end{theorem}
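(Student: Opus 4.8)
The plan is to recognize that, after truncation, the auxiliary problem $(P_{\lambda,a})$ satisfies exactly the structural hypotheses governing the degenerate problem of Theorem~\ref{Theo11}, with the single exponent $\mu$ of $(F_2)$ and $(M'_2)$ playing the combined role that $\gamma$ and $\mu$ play there. Indeed, $M_a$ obeys $(M_1)$ and $(M'_2)$ and is bounded via \eqref{Ma1} by $m_0\le M_a\le a$, while $(F_1)$ and $(F_2)$ furnish, for the single parameter $\mu$, the analogues of $(f_1)$, $(f_2)$ and $(f_3)$. Consequently every estimate for the degenerate case transfers verbatim upon replacing $\gamma$ by $\mu$, and the two-sided bound on $M_a$ only simplifies matters. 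I would therefore run the three-part scheme of the method overview with $I_{\lambda,a}$ in place of $I_\lambda$.

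First, for the signed solutions, I would check the mountain-pass geometry and use \eqref{M'a2} together with the inequality $\frac1\mu f(x,t)t-F(x,t)\ge 0$ coming from $(F_2)$ to bound $I_{\lambda,a}(u)-\frac1\mu\langle I'_{\lambda,a}(u),u\rangle$ below by a positive multiple of a coercive expression; since $M_a\ge m_0>0$, this gives coercivity on $\mathcal{N}_{\lambda,a}$ and boundedness of all $(PS)$ sequences. In the subcritical range $p<q<p_\alpha^*$ the compact embedding $W_0^{s,p}(\Omega)\hookrightarrow L^{q}(\Omega,|x|^{-\alpha}dx)$ yields the $(PS)$ condition, so minimizing over $\mathcal{N}_{\lambda,a}^{+}$ and $\mathcal{N}_{\lambda,a}^{-}$ by Ekeland's principle produces a positive and a negative solution realizing $c_{\mathcal{N}_{\lambda,a}^{+}}$ and $c_{\mathcal{N}_{\lambda,a}^{-}}$. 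A fibering argument then shows every nonzero critical point of constant sign lies in the corresponding one-sided Nehari set, so each ground state is signed and $c_{\mathcal{N}_{\lambda,a}}=\min\{c_{\mathcal{N}_{\lambda,a}^{+}},c_{\mathcal{N}_{\lambda,a}^{-}}\}$.

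Second, for the sign-changing solution in the subcritical case, I would minimize $I_{\lambda,a}$ over $\mathcal{M}_{\lambda,a}$. The obstruction is that, because of the strict nonlocal inequality $\|u\|^p>\|u^+\|^p+\|u^-\|^p$ in \eqref{Equa62} combined with the Kirchhoff factor, one cannot rescale $u^{+}$ and $u^{-}$ independently onto $\mathcal{M}_{\lambda,a}$. For fixed sign-changing $u$ I would instead study the vector field $(\sigma,\tau)\mapsto\bigl(\langle I'_{\lambda,a}(\sigma u^{+}+\tau u^{-}),\sigma u^{+}\rangle,\ \langle I'_{\lambda,a}(\sigma u^{+}+\tau u^{-}),\tau u^{-}\rangle\bigr)$ on $[0,\infty)^2$ and, using the monotonicities from $(M_1)$, $(M'_2)$ and $(F_2)$, a Brouwer-degree argument in $\mathbb{R}^2$ produces a unique pair projecting $u$ onto $\mathcal{M}_{\lambda,a}$. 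This both shows $\mathcal{M}_{\lambda,a}\neq\emptyset$ and lets a minimizing sequence be converted into a bounded $(PS)$ sequence whose weak limit is the desired least-energy sign-changing solution. The strict form of \eqref{Equa55}, valid for sign-changing $w$, combined with the fact that $\sigma u^{+}$ and $\tau u^{-}$ can be separately projected into $\mathcal{N}_{\lambda,a}^{\pm}$, yields $c_{\mathcal{M}_{\lambda,a}}>c_{\mathcal{N}_{\lambda,a}^{+}}+c_{\mathcal{N}_{\lambda,a}^{-}}$.

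Third, and here I expect the main difficulty, is the critical case $q=p_\alpha^*$, where $I_{\lambda,a}$ loses weak lower semicontinuity on $\mathcal{M}_{\lambda,a}$ and direct minimization fails. Following the quasi-critical scheme, I would pick $q_n\uparrow p_\alpha^*$, invoke the subcritical result to obtain least-energy sign-changing solutions $u_n\in\mathcal{M}_{\lambda,a,q_n}$, and use the coercivity estimate to bound $\{u_n\}$, so that $u_n\rightharpoonup u$ along a subsequence. The crux is excluding concentration: applying the concentration-compactness principle with variable exponents (Lemma~\ref{lem4}) to $\{u_n\}$, I would show that if $\lambda$ is large enough then $c_{\mathcal{M}_{\lambda,a,q_n}}$ stays strictly below the first level at which a nontrivial atom $\nu_j$ can appear --- a threshold of order $\tfrac{sp-\alpha}{p(N-\alpha)}(m_0S_\alpha)^{(N-\alpha)/(sp-\alpha)}$ determined by $m_0$, $S_\alpha$ and $p_\alpha^*$ --- which forces $\nu_j=0$ for all $j$ and hence, via Lemmas~\ref{lem1} and~\ref{lem2}, strong convergence in the Hardy-Sobolev norm. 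One must also verify that neither $u^{+}$ nor $u^{-}$ vanishes in the limit, which follows from the uniform lower bounds on $\|u_n^{\pm}\|$ forced by membership in $\mathcal{M}_{\lambda,a,q_n}$ together with $(F_2)$. Passing to the limit then gives a least-energy sign-changing solution of $(P_{\lambda,a})$ with $q=p_\alpha^*$ and preserves the strict energy inequality. The delicate point throughout is the threshold computation fixing $\overline{\lambda}$: because $M_a$ is pinned between $m_0$ and $a$ the Kirchhoff factor shifts the usual critical level, and one must check that the superlinear perturbation $\lambda F$ lowers the sign-changing minimax value beneath it.
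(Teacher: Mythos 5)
Your proposal is correct and follows essentially the same route as the paper: the paper proves this theorem by replacing the technical lemmas of the degenerate case (Subsection \ref{Subdeg}) with their truncated counterparts in Subsection \ref{Subnondeg} (with $\mu$ from \hyperlink{F2}{$(F_{2})$} and \hyperlink{M'2}{$(M'_{2})$} taking over the roles of $\gamma$ and $\mu$, exactly as you observe) and then repeating the proof of Theorem \ref{Theo11} --- signed solutions via Ekeland's principle and the $(PS)$ condition, the subcritical sign-changing solution via minimization on $\mathcal{M}_{\lambda,a}$ and a Brouwer-degree argument, and the critical case via quasi-critical approximation $q_{n}\rightarrow p_{\alpha}^{*}$ with the concentration compactness principle and the level threshold $l_{2}=\left(\frac{1}{\mu}-\frac{1}{p_{\alpha}^{*}}\right)(m_{0}S_{\alpha})^{\frac{N-\alpha}{sp-\alpha}}$ for $\lambda$ large, which is your scheme step by step.
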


\section{Technical results}\label{Section2}

For each $ u\in W_{0}^{s,p}(\Omega)\setminus \{0\} $, we define $ \varphi _{u}:[0,\infty)\rightarrow \mathbb{R} $ by \begin{equation*}
\varphi_{u}(t)=I_{\lambda}(tu).
\end{equation*}

Also, for each $ w\in W_{0}^{s,p}(\Omega) $ with $ w^{+}\neq 0 $ and $ w^{-}\neq 0 $, we consider the function $ \psi_{w}:[0,\infty)\times[0,\infty)\rightarrow \mathbb{R} $ given by
\begin{equation}\label{Equa34}
\psi_{w}(t,\theta)=I_{\lambda}(tw^{+}+\theta w^{-})
\end{equation}
and $ \Psi_{w}:[0,\infty)\times [0,\infty) \rightarrow \mathbb{R}^{2}$ given by
\begin{equation}\label{Equa35}
\Psi _{w}(t,\theta)=\left(t \frac{\partial \psi _{w}}{\partial t}(t,\theta), \theta \frac{\partial \psi _{w}}{\partial \theta}(t,\theta)\right)=
(\langle I'_{\lambda}(tw^{+}+\theta w^{-}),t w^{+}\rangle, \langle I'_{\lambda}(tw^{+}+\theta w^{-}),\theta w^{-}\rangle),
\end{equation}
which is of $ C^{1} $ class, since  $ \overline{f}(x,\cdot) $ is of $ C^{1} $ class, for a.e. $ x\in \Omega $ and \hyperlink{0}{($ f_{0} $)} holds. We denote by $ \varphi_{u,a} $ and $ \psi_{u,a} $ the corresponding functions associate to $ I_{\lambda,a} $.

\subsection{The degenerate case}\label{Subdeg}
In this subsection, we present some previews results considering the degenerate case $ M(0)=0 $. Here, we assume that $ M $ satisfies \hyperlink{M1}{$ (M_{1}) $}-\hyperlink{M2}{$ (M_{2}) $} and $ f $ satisfies \hyperlink{0}{$ (f_{0}) $}-\hyperlink{3}{$ (f_{3}) $}. 

\begin{lemma}\label{Lem1.1}
Functional $ I_{\lambda} $ satisfies the following geometric conditions: 
\begin{itemize}
\item[(1)] For any $ u\in W_{0}^{s,p}(\Omega)\setminus \{0\} $, we have
\begin{equation*}
I_{\lambda}(tu^{+}+\theta u^{-})\rightarrow -\infty, \;\operatorname{as}\;  |(t,\theta)|\rightarrow \infty .
\end{equation*} 
\item[(2)] There exists $ R>0 $ such that 
\begin{equation}
I_{\lambda}(u)\geq R \|u\|^{\gamma} \;\;\operatorname{and}\;\; \langle I'_{\lambda}(u),u\rangle \geq R \|u\|^{\gamma},
\end{equation} 
whenever $ \|u\|\leq R $.
\end{itemize}
\end{lemma}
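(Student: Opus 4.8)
The plan is to treat the two items separately; in each, the functional reduces to a competition governed by the exponent $\gamma$, with the Kirchhoff term controlled from above and below through \eqref{R1} and \eqref{R3}.

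For item (1) I would fix $u$ with $u^{+}\neq 0$ and $u^{-}\neq 0$ and write $v:=tu^{+}+\theta u^{-}$ for $t,\theta\geq 0$, noting that $v^{+}=tu^{+}$ and $v^{-}=\theta u^{-}$ have disjoint supports. Integrating \eqref{R1} bounds the Kirchhoff energy from above by $\tfrac1p\widehat{M}(\|v\|^{p})\leq \frac{M(1)}{p}\|v\|^{p}+\frac{M(1)}{\gamma}\|v\|^{\gamma}$, and the triangle inequality gives $\|v\|\leq C(u)\,|(t,\theta)|$, so this contributes at most order $|(t,\theta)|^{\gamma}$. The critical Hardy term is nonpositive and may be discarded. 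The decisive step is the lower bound from \eqref{Equa2.1}: by the disjointness of the supports, $\int_{\Omega}F(x,v)\,dx\geq t^{\mu}a_{+}+\theta^{\mu}a_{-}-\|C_{2}\|_{L^{\infty}}|\Omega|$, where $a_{\pm}:=\int_{\Omega}C_{1}(x)|u^{\pm}|^{\mu}\,dx>0$ by \eqref{propf3}. Since $\min\{a_{+},a_{-}\}(t^{\mu}+\theta^{\mu})\geq c\,|(t,\theta)|^{\mu}$ for some $c>0$, I obtain $I_{\lambda}(v)\leq C|(t,\theta)|^{\gamma}-\lambda c\,|(t,\theta)|^{\mu}+C'$, and because $\mu>\gamma$ (from $(f_{2})$) the right-hand side tends to $-\infty$. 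If one of $u^{\pm}$ vanishes, the claim collapses to $I_{\lambda}(tu)\to-\infty$ in the surviving variable, handled by the identical estimate.

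For item (2) I would restrict to $\|u\|\leq 1$ and produce a lower bound of order $\|u\|^{\gamma}$. Integrating \eqref{R3} yields $\tfrac1p\widehat{M}(\|u\|^{p})\geq \frac{M(1)}{\gamma}\|u\|^{\gamma}$, and likewise $M(\|u\|^{p})\|u\|^{p}\geq M(1)\|u\|^{\gamma}$ for the derivative term. Using $(f_{1})$ with some $r\in(\gamma,p^{*})$ together with the continuous Sobolev and Hardy--Sobolev embeddings, I would bound $\lambda\int_{\Omega}F(x,u)\,dx$ (respectively $\lambda\int_{\Omega}f(x,u)u\,dx$) by $\lambda C\epsilon\|u\|^{\gamma}+\lambda C\epsilon\|u\|^{p^{*}}+\lambda C_{\epsilon,r}\|u\|^{r}$, and the critical term by $C\|u\|^{q}$, where $r,p^{*},q$ all exceed $\gamma$. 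Choosing $\epsilon$ with $\lambda C\epsilon\leq\frac{M(1)}{2\gamma}$ absorbs the matching $\|u\|^{\gamma}$ contribution and leaves $I_{\lambda}(u)\geq\frac{M(1)}{2\gamma}\|u\|^{\gamma}-C''\|u\|^{\beta}$ with $\beta:=\min\{r,p^{*},q\}>\gamma$; the same computation gives $\langle I_{\lambda}'(u),u\rangle\geq\frac{M(1)}{2\gamma}\|u\|^{\gamma}-C''\|u\|^{\beta}$. Shrinking $\|u\|$ makes the $\|u\|^{\beta}$ term negligible, and I would finish by selecting a single $R>0$ small enough to serve simultaneously as the radius and as the coefficient in both inequalities.

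The routine content is the chain of embedding estimates and the integration of the bounds on $M$; the conceptual core is the exponent bookkeeping. I expect the main obstacle in item (1) to be making the blow-up uniform over all directions $|(t,\theta)|\to\infty$ rather than only along the coordinate axes, which is precisely where the strict superlinearity $\mu>\gamma$ and the positivity $a_{\pm}>0$ enter; in item (2) everything rests on $\gamma$ lying strictly below $r,p^{*}$ and $q$, so that the $\|u\|^{\gamma}$ term dominates near the origin.
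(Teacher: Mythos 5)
Your proposal is correct, and item (2) follows the paper's own argument essentially step by step: integrate \eqref{R3} to get the lower bound $\frac{M(1)}{\gamma}\|u\|^{\gamma}$ on the Kirchhoff terms, control the $f$-terms via \hyperlink{1}{$(f_{1})$} and the embeddings, absorb the $\epsilon$-contribution, and use $r,p^{*},q>\gamma$ to shrink $R$; your explicit restriction $r\in(\gamma,p^{*})$ only makes precise what the paper uses implicitly. Item (1), however, takes a genuinely different route. The paper discards the term $-\lambda\int_{\Omega}F(x,tu^{+}+\theta u^{-})dx$ outright using the positivity \eqref{propf3}, and lets the Hardy term drive the blow-up: by disjointness of supports it contributes $-\frac{1}{q}t^{q}\int_{\Omega}\frac{|u^{+}|^{q}}{|x|^{\alpha}}dx-\frac{1}{q}\theta^{q}\int_{\Omega}\frac{|u^{-}|^{q}}{|x|^{\alpha}}dx$, and since $q>\gamma>p$ each single-variable expression is bounded above while at least one tends to $-\infty$. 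You do the mirror image: discard the nonpositive Hardy term and let the superlinear lower bound \eqref{Equa2.1} on $F$ (with $\mu>\gamma$ from \hyperlink{2}{$(f_{2})$}) drive the blow-up. Both are valid under the standing hypotheses of Subsection \ref{Subdeg}. What each buys: the paper's version is uniform in $\lambda$ and needs nothing from $f$ beyond $F>0$, so it survives even as $\lambda\to 0^{+}$; your version is uniform in the exponent $q$ and does not use the Hardy term at all, which is precisely the uniformity the paper itself later needs and invokes in the proof of Proposition \ref{Prop1}(2), where the bound $\psi_{w}^{q_{n}}(t,\theta)\leq\frac{1}{p}\widehat{M}(\|tw^{+}+\theta w^{-}\|^{p})-\lambda\int_{\Omega}F(x,tw^{+}+\theta w^{-})dx\rightarrow-\infty$ ``independently of $n$'' is exactly your estimate. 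One point to keep explicit in either approach: the coercivity coefficient must be strictly positive ($\int_{\Omega}\frac{|u^{\pm}|^{q}}{|x|^{\alpha}}dx>0$ in the paper, your $a_{\pm}>0$), which requires $u^{\pm}\neq 0$; for $u$ of one sign the statement only makes sense along the surviving variable, as you correctly note.
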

\begin{proof}
\noindent \textbf{(1)} From \eqref{R1} and \eqref{propf3}, we have
\begin{eqnarray*}
I_{\lambda}(tu^{+}+\theta u^{-})&=&\frac{1}{p}\widehat{M}(\|tu^{+}+\theta u^{-}\|^{p})-\lambda \int _{\Omega} F(x,t u^{+}+\theta u^{-})dx-\frac{1}{q}\int _{\Omega}\frac{|tu^{+}+\theta u^{-}|^{q}}{|x|^{\alpha}}dx\\
&\leq&\frac{M(1)}{p}\|tu^{+}+\theta u^{-}\|^{p}+\frac{M(1)}{\gamma}\|tu^{+}+\theta u^{-}\|^{\gamma}-\frac{1}{q}\int _{\Omega}\frac{|tu^{+}+\theta u^{-}|^{q}}{|x|^{\alpha}}dx\\
&\leq&\frac{2^{p-1}}{p}M(1) t^{p}\|u^{+}\|^{p}+\frac{2^{\gamma-1}}{\gamma}M(1)t^{\gamma}\|u^{+}\|^{\gamma} -\frac{1}{q}t^{q}\int _{\Omega}\frac{|u^{+}|^{q}}{|x|^{\alpha}}dx\\
&&+\frac{2^{p-1}}{p}M(1) \theta ^{p}\|u^{-}\|^{p}+\frac{2^{\gamma-1}}{\gamma}M(1) \theta ^{\gamma}\|u^{-}\|^{\gamma} -\frac{1}{q}\theta^{q}\int _{\Omega}\frac{|u^{-}|^{q}}{|x|^{\alpha}}dx.
\end{eqnarray*}
Since $ p,\gamma<q $, it follows that $ I_{\lambda}(tu^{+}+\theta u^{-})\rightarrow -\infty $, as $ |(t,\theta)|\rightarrow \infty $.

\noindent \textbf{(2)} From \eqref{R3}, assumption \hyperlink{1}{$ (f_{1}) $} and the continuously of embedding $ W_{0}^{s,p}(\Omega)\hookrightarrow L^{r}(\Omega,\frac{dx}{|x|^{\beta}}) $ with $ 0\leq \beta<ps $ and $ r\in[1,p_{\beta}^{*}] $, there exist $ C, C_{\epsilon}, C_{\epsilon,r}, C_{q}>0 $ such that, for $ u\in W_{0}^{s,p}(\Omega) $ with $ \|u\|\leq 1 $, we obtain
\begin{eqnarray*}
I_{\lambda}(u)&=&\frac{1}{p}\widehat{M}(\|u\|^{p})-\lambda \int _{\Omega} F(x,u)dx-\frac{1}{q}\int _{\Omega}\frac{|u|^{q}}{|x|^{\alpha}}dx\\
&\geq& \frac{M(1)}{\gamma}\|u\|^{\gamma} -\frac{\epsilon}{\gamma} \int _{\Omega}|u|^{\gamma}dx-\frac{\epsilon}{p^{*}} \int_{\Omega}|u|^{p^{*}}dx-C_{\epsilon,r}\int _{\Omega}|u|^{r}dx -\frac{1}{q}\int _{\Omega}\frac{|u|^{q}}{|x|^{\alpha}}dx\\
&\geq&\left(\frac{M(1)}{\gamma}-\epsilon C\right)\|u\|^{\gamma} -C_{\epsilon}\|u\|^{p^{*}}-C_{\epsilon,r}\|u\|^{r} -C_{q}\|u\|^{q}.
\end{eqnarray*}
Take $ 0<\epsilon\ll 1 $ such that $ \left(\frac{M(1)}{\gamma}-\epsilon C\right)>0 $. Therefore, since $ p^{*},r,q>\gamma $, there exists $ R>0 $, sufficiently small, such that $ I_{\lambda}(u)\geq R \|u\|^{\gamma} $, whenever $ \|u\|\leq R $. Proceeding in a similar way, one can show the same estimative for $ \langle I_{\lambda}(u),u\rangle $.
\end{proof}

\begin{lemma}\label{Lem1}
For each $ u\in W_{0}^{s,p}(\Omega)\setminus \{0\} $, there exists unique $ t_{u}>0 $ such that $ t_{u}u\in \mathcal{N}_{\lambda} $. In particular, $ \mathcal{N}_{\lambda}\neq \emptyset $ and $ \mathcal{N}_{\lambda}^{\pm}\neq \emptyset $. Moreover, $ I_{\lambda}(t_{u}u)>I_{\lambda}(t u) $, for all $ t\geq 0 $, $ t\neq t_{u} $.
\end{lemma}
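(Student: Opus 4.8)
The plan is to study the fibering map $\varphi_u(t) = I_\lambda(tu)$ on $[0,\infty)$ and to locate its unique positive critical point. First I would record the chain-rule identity $t\varphi_u'(t) = \langle I_\lambda'(tu), tu\rangle$, valid for $t > 0$, so that $tu \in \mathcal{N}_\lambda$ holds precisely when $\varphi_u'(t) = 0$. Differentiating gives
\begin{equation*}
\varphi_u'(t) = M(t^p\|u\|^p)\,t^{p-1}\|u\|^p - \lambda\int_\Omega f(x,tu)u\,dx - t^{q-1}\int_\Omega\frac{|u|^q}{|x|^\alpha}\,dx,
\end{equation*}
and I would factor out $t^{\gamma-1}$, setting $h_u(t) := t^{1-\gamma}\varphi_u'(t)$, namely
\begin{equation*}
h_u(t) = \frac{M(t^p\|u\|^p)}{t^{\gamma-p}}\|u\|^p - \lambda\int_\Omega\frac{f(x,tu)u}{t^{\gamma-1}}\,dx - t^{q-\gamma}\int_\Omega\frac{|u|^q}{|x|^\alpha}\,dx.
\end{equation*}
The zeros of $\varphi_u'$ in $(0,\infty)$ are exactly those of $h_u$, and $\varphi_u'$ and $h_u$ have the same sign.

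The core of the argument is to show that $h_u$ is strictly decreasing on $(0,\infty)$. For the first term I would rewrite it as $\frac{M(\tau)}{\tau^{(\gamma-p)/p}}\|u\|^\gamma$ with $\tau = t^p\|u\|^p$, so that $(M_2)$ makes it nonincreasing in $t$. For the nonlinear term I would use the sign information from $(f_0)$ — which forces $f(x,s)s>0$, hence $f(x,tu)u\ge 0$ — together with the rewriting $\frac{f(x,tu)u}{t^{\gamma-1}} = \frac{f(x,tu)}{|tu|^{\gamma-2}(tu)}\,|u|^\gamma$; then $(f_3)$ shows that the quotient $\frac{f(x,s)}{|s|^{\gamma-2}s}$ increases in $t$ along $s = tu(x)$ both where $u(x)>0$ and where $u(x)<0$, so this term is nondecreasing and its negative is nonincreasing. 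Finally, since $\gamma < q$ and $\int_\Omega |u|^q/|x|^\alpha\,dx > 0$, the last term is strictly decreasing. Summing the three contributions yields strict monotonicity of $h_u$.

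Next I would pin down the boundary behaviour. For small $t$, Lemma \ref{Lem1.1}(2) applied to $tu$ (valid once $t\|u\| \le R$) gives $\langle I_\lambda'(tu), tu\rangle \ge R\|tu\|^\gamma > 0$, whence $h_u(t) > 0$. For large $t$, the bound \eqref{R1} on $M$ together with $f(x,tu)u \ge 0$ yields
\begin{equation*}
\varphi_u'(t) \le M(1)t^{p-1}\|u\|^p + M(1)t^{\gamma-1}\|u\|^\gamma - t^{q-1}\int_\Omega\frac{|u|^q}{|x|^\alpha}\,dx,
\end{equation*}
and since $q > \gamma > p$ the right-hand side tends to $-\infty$, so $h_u(t) < 0$ for $t$ large. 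By the intermediate value theorem and strict monotonicity, $h_u$ has a unique zero $t_u > 0$, which is the unique $t_u$ with $t_u u \in \mathcal{N}_\lambda$.

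It then remains to read off the variational characterization and the nonemptiness. Since $h_u > 0$ on $(0,t_u)$ and $h_u < 0$ on $(t_u,\infty)$, the same signs hold for $\varphi_u'$, so $\varphi_u$ is strictly increasing up to $t_u$ and strictly decreasing afterwards; thus $t_u$ is the strict global maximizer of $\varphi_u$ on $[0,\infty)$, giving $I_\lambda(t_u u) > I_\lambda(tu)$ for every $t \ge 0$ with $t \neq t_u$. Taking any $u \ge 0$, $u \neq 0$ produces $t_u u \in \mathcal{N}_\lambda^+$, and any $u \le 0$, $u \neq 0$ produces a point of $\mathcal{N}_\lambda^-$, so all the sets are nonempty. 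I expect the delicate point to be the monotonicity of the nonlinear term: one must combine the sign condition $(f_0)$ with $(f_3)$ in exactly the right form so that the $f$-contribution to $h_u$ is genuinely nonincreasing in $t$ across both the positivity and the negativity regions of $u$.
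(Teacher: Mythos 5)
Your proposal is correct and follows essentially the same route as the paper: both proofs hinge on the fact that $\varphi_u'(t)/t^{\gamma-1}$ (your $h_u$) is strictly decreasing, which is exactly what the paper deduces from \hyperlink{M2}{$(M_2)$} and \hyperlink{3}{$(f_3)$}, and both draw the boundary/geometric information from Lemma \ref{Lem1.1}. The only cosmetic difference is that the paper obtains existence of $t_u$ by maximizing $\varphi_u$ (mountain-pass geometry) and then invokes monotonicity only for uniqueness, whereas you get existence and uniqueness simultaneously from the sign change of $h_u$ and the intermediate value theorem; your write-up also usefully spells out the pointwise argument (splitting over $\{u>0\}$ and $\{u<0\}$) behind the monotonicity claim that the paper leaves implicit.
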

\begin{proof}
Fixed $ u\in W_{0}^{s,p}(\Omega)\setminus \{0\} $, from Lemma \ref{Lem1.1}, one can show that there exists $ t_{u}>0 $ such that
\begin{equation*}
\varphi_{u}(t_{u})=\max_{t\geq 0}\varphi_{u}(t)>0.
\end{equation*}
Then $ \varphi_{u}'(t_{u})=0 $ and thus $ t_{u}u\in \mathcal{N}_{\lambda} $. By \hyperlink{M2}{($M_{2}$)} and \hyperlink{3}{$ (f_{3}) $}, it is deduced that $ \frac{\varphi_{u}'(t)}{t^{\gamma-1}} $ is decreasing. Since $ \frac{\varphi_{u}'(t_{u})}{(t_{u})^{\gamma-1}}=0 $, it follows that $ t_{u} $ is the unique point in $ (0,\infty) $ with the property $ \varphi_{u}'(t_{u})=0 $. Finally, by uniqueness, we have $ \varphi_{u}(t_{u})>\varphi_{u}(t) $, for all $ t\geq 0 $ with $ t\neq t_{u} $.
\end{proof}

\begin{corollary}\label{Cor1}
For each $ u\in W_{0}^{s,p}(\Omega)\setminus \{0\} $ with $ \langle I'_{\lambda}(u),u\rangle \leq 0 $,  there exists unique $ t_{u}\in (0,1] $ such that $ t_{u}u \in \mathcal{N}_{\lambda} $.
\end{corollary}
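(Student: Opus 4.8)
The plan is to reduce everything to the one-variable analysis already carried out in Lemma \ref{Lem1}, reading off the location of the maximizer from the sign hypothesis. Fix $u\in W_0^{s,p}(\Omega)\setminus\{0\}$ with $\langle I'_\lambda(u),u\rangle\leq 0$ and recall $\varphi_u(t)=I_\lambda(tu)$. Since $\varphi_u'(t)=\langle I'_\lambda(tu),u\rangle$ and hence $t\varphi_u'(t)=\langle I'_\lambda(tu),tu\rangle$, the condition $tu\in\mathcal{N}_\lambda$ is equivalent to $\varphi_u'(t)=0$ for $t>0$. By Lemma \ref{Lem1} there is a unique $t_u>0$ with $\varphi_u'(t_u)=0$, and $t_u$ is the global maximizer of $\varphi_u$ on $[0,\infty)$. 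Thus it remains only to show $t_u\leq 1$.

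First I would observe that the hypothesis translates directly into a statement about $\varphi_u'$ at $t=1$, namely $\varphi_u'(1)=\langle I'_\lambda(u),u\rangle\leq 0$. Next I would invoke the monotonicity established inside the proof of Lemma \ref{Lem1}: the map $t\mapsto \varphi_u'(t)/t^{\gamma-1}$ is strictly decreasing on $(0,\infty)$ (this is where \hyperlink{M2}{$(M_2)$} and \hyperlink{3}{$(f_3)$} enter). Because this quotient vanishes precisely at $t_u$, it is positive on $(0,t_u)$ and negative on $(t_u,\infty)$, and since $t^{\gamma-1}>0$ the same sign pattern holds for $\varphi_u'$ itself: $\varphi_u'>0$ on $(0,t_u)$ and $\varphi_u'<0$ on $(t_u,\infty)$.

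Finally I would combine the two facts. If $\varphi_u'(1)=0$, then by the uniqueness of the zero we get $t_u=1$. If instead $\varphi_u'(1)<0$, then $1$ must lie in the region $(t_u,\infty)$ where $\varphi_u'$ is negative, forcing $t_u<1$. In either case $t_u\leq 1$, and together with $t_u>0$ this gives $t_u\in(0,1]$; uniqueness is inherited from Lemma \ref{Lem1}. I do not expect any genuine obstacle here: the corollary is a short consequence of the uniqueness and the sign structure of $\varphi_u'$ already available from Lemma \ref{Lem1}, the only point requiring care being the routine identification $\varphi_u'(1)=\langle I'_\lambda(u),u\rangle$ so that the hypothesis is applied at the correct value $t=1$.
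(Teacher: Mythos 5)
Your proof is correct and follows essentially the same route as the paper: invoke Lemma \ref{Lem1} for existence and uniqueness of $t_u>0$, translate the hypothesis into $\varphi_u'(1)\leq 0$, and use the monotonicity of $t\mapsto \varphi_u'(t)/t^{\gamma-1}$ to force $t_u\leq 1$. The only difference is that you spell out the sign pattern of $\varphi_u'$ on $(0,t_u)$ and $(t_u,\infty)$ explicitly, which the paper leaves implicit.
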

\begin{proof}
From Lemma \ref{Lem1}, there exists unique $ t_{u}>0 $ such that $ t_{u}u\in \mathcal{N}_{\lambda} $. Then, $ \varphi_{u}'(t_{u})=0 $ and by hypothesis $ \varphi_{u}'(1)\leq 0 $. Since $ \frac{\varphi_{u}'(t)}{t^{\gamma-1}} $ is decreasing, we conclude that $ t_{u}\in (0,1] $.
\end{proof}

\begin{lemma}\label{lem3}
For each $ w\in W_{0}^{s,p}(\Omega) $ with $ w^{+}\neq 0$ and $ w^{-}\neq 0 $, there exists unique pair $ (t_{w},\theta_{w})\in (0,\infty)\times (0,\infty) $ such that 
\begin{equation*}
t_{w}w^{+}+\theta _{w} w^{-}\in \mathcal{M}_{\lambda}.
\end{equation*}
In particular, $ \mathcal{M}_{\lambda}\neq \emptyset $. Furthermore, for all $ t,\theta \geq 0 $ with $ (t,\theta)\neq (t_{w},\theta _{w}) $, we have
\begin{equation*}
I_{\lambda}(tw^{+}+\theta w^{-})<I_{\lambda}(t_{w}w^{+}+\theta _{w}w^{-}).
\end{equation*}
\end{lemma}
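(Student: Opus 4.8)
The plan is to obtain $u=t_w w^++\theta_w w^-$ as the global maximiser of the two–variable fibering map $\psi_w$ from \eqref{Equa34} over the closed quadrant $Q:=[0,\infty)\times[0,\infty)$, and then to read off $u\in\mathcal{M}_{\lambda}$ from $\Psi_w(t_w,\theta_w)=(0,0)$, where $\Psi_w$ is as in \eqref{Equa35}. By Lemma~\ref{Lem1.1}(1), $\psi_w(t,\theta)\to-\infty$ as $|(t,\theta)|\to\infty$, and by Lemma~\ref{Lem1.1}(2), $\psi_w$ takes a positive value near the origin; since $\psi_w$ is continuous it attains a positive global maximum over $Q$ at some $(t_w,\theta_w)$, which is therefore not the origin. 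The decisive step is to exclude the two half–axes. As $w^+$ and $w^-$ have disjoint supports, the nonlinear and Hardy contributions split, and on $\{\theta=0\}$ one finds $\partial_\theta\psi_w(t,0)=M(\|tw^+\|^p)\,t^{p-1}\langle(-\Delta_p)^s w^+,w^-\rangle$ for $t>0$. Splitting the defining double integral over $\Omega_+\times\Omega_-$ and $\Omega_-\times\Omega_+$ shows $\langle(-\Delta_p)^s w^+,w^-\rangle>0$ whenever $w$ changes sign — a purely nonlocal effect, the corresponding local quantity being zero — while $M(\|tw^+\|^p)>0$ for $t>0$ even in the degenerate case, since $\|tw^+\|^p>0$. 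Hence $\partial_\theta\psi_w(t,0)>0$, so the maximiser cannot lie on $\{\theta=0\}$; the symmetric computation excludes $\{t=0\}$. Thus $(t_w,\theta_w)$ is interior, $\nabla\psi_w(t_w,\theta_w)=0$ gives $\Psi_w(t_w,\theta_w)=(0,0)$, and as $t_w,\theta_w>0$ we have $(t_w w^++\theta_w w^-)^+=t_w w^+\neq0$ and $(t_w w^++\theta_w w^-)^-=\theta_w w^-\neq0$, so $t_w w^++\theta_w w^-\in\mathcal{M}_{\lambda}$ and in particular $\mathcal{M}_{\lambda}\neq\emptyset$.

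The main difficulty is uniqueness, because the Kirchhoff coupling through $M(\|\cdot\|^p)$ and the strict super-additivity $\|tw^++\theta w^-\|^p>\|tw^+\|^p+\|\theta w^-\|^p$ coming from \eqref{Equa62} prevent the two equations $\Psi_w=(0,0)$ from splitting into two independent scalar problems, as they do when $s=1$. Given two pairs producing elements of $\mathcal{M}_{\lambda}$, after normalising by the first pair it suffices to prove that for $u\in\mathcal{M}_{\lambda}$ the point $(1,1)$ is the only critical point of $(t,\theta)\mapsto I_{\lambda}(tu^++\theta u^-)$ in $(0,\infty)^2$. Writing $\sigma=\max\{t,\theta\}$ and $\tau=\min\{t,\theta\}$, I would prove $\sigma\le1$ and $\tau\ge1$, which forces $t=\theta=1$. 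Suppose $\sigma=t$ (the case $\sigma=\theta$ being symmetric in $+/-$). A term-by-term comparison of the cross integrals gives $\langle(-\Delta_p)^s(tu^++\theta u^-),tu^+\rangle\le t^p\langle(-\Delta_p)^s u,u^+\rangle$ when $t\ge\theta$, together with $\|tu^++\theta u^-\|^p\le t^p\|u\|^p$. Inserting these into $\langle I'_{\lambda}(tu^++\theta u^-),tu^+\rangle=0$, dividing by $t^{\gamma}$, and using \hyperlink{M1}{$(M_{1})$} with the decreasing-ratio property \hyperlink{M2}{$(M_{2})$} (equivalently \eqref{R5}), the left-hand side is at most $M(\|u\|^p)\langle(-\Delta_p)^s u,u^+\rangle$ when $t\ge1$; on the other hand \hyperlink{3}{$(f_{3})$} and $q>\gamma$ make the right-hand side strictly exceed $\lambda\int_{\Omega}f(x,u^+)u^+\,dx+\int_{\Omega}|u^+|^q|x|^{-\alpha}\,dx$ when $t>1$. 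Since $u\in\mathcal{M}_{\lambda}$ gives $M(\|u\|^p)\langle(-\Delta_p)^s u,u^+\rangle=\lambda\int_{\Omega}f(x,u^+)u^+\,dx+\int_{\Omega}|u^+|^q|x|^{-\alpha}\,dx$, the case $t>1$ is impossible, so $\sigma=t\le1$. The reversed comparisons $\langle(-\Delta_p)^s(tu^++\theta u^-),\theta u^-\rangle\ge\theta^p\langle(-\Delta_p)^s u,u^-\rangle$ and $\|tu^++\theta u^-\|^p\ge\theta^p\|u\|^p$, valid for the smaller scale $\theta\le t$, run through the $u^-$–equation and exclude $\theta<1$ in the same manner, giving $\tau=\theta\ge1$. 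Hence $\sigma=\tau=1$. I expect the careful bookkeeping of the nonlocal cross terms — establishing the two inequalities for $\langle(-\Delta_p)^s(\cdot),(\cdot)^{\pm}\rangle$ and checking that the monotonicity survives the coupling through $M(\|\cdot\|^p)$ — to be the delicate part.

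Finally, uniqueness of the interior critical point, combined with the coercivity of $\psi_w$ and the strictly inward sign of its gradient on the axes established in the existence step, upgrades $(t_w,\theta_w)$ to the strict global maximiser of $\psi_w$ over all of $Q$: any interior point distinct from $(t_w,\theta_w)$ fails to be critical and hence cannot attain the maximum, while every boundary point has $\psi_w$ strictly below the maximum. This yields $I_{\lambda}(tw^++\theta w^-)<I_{\lambda}(t_w w^++\theta_w w^-)$ for all $(t,\theta)\neq(t_w,\theta_w)$ with $t,\theta\ge0$, completing the proof.
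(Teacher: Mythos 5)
Your proposal is correct and follows essentially the same route as the paper: a global maximizer of $\psi_{w}$ over the closed quadrant exists by Lemma \ref{Lem1.1}(1), is shown to lie off the axes and is therefore an interior critical point, while uniqueness is obtained exactly as in the paper by reducing to the case $w\in\mathcal{M}_{\lambda}$, $tw^{+}+\theta w^{-}\in\mathcal{M}_{\lambda}$ with (say) $\theta\le t$, and playing the nonlocal comparison inequalities \eqref{Equa30}--\eqref{Equa31} against $(M_{1})$, $(M_{2})$, $(f_{3})$ and $q>\gamma$ to force $t\le 1$ and $\theta\ge 1$, hence $t=\theta=1$. The only deviation is the sub-step ruling out the half-axes, where you differentiate $\psi_{w}$ in the transverse direction and use the strict positivity of the nonlocal cross term $\langle(-\Delta_{p})^{s}w^{+},w^{-}\rangle$, whereas the paper combines the super-additivity \eqref{Equa55} with the positivity of $I_{\lambda}$ near the origin from Lemma \ref{Lem1.1}(2); both arguments are valid.
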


\begin{proof}
Firstly, we will show the existence of the pair $ (t_{w},\theta _{w}) $. From item (1) of Lemma \ref{Lem1.1}, it follows that
\begin{equation*}
\lim _{|(t,\theta)|\rightarrow \infty}\psi_{w}(t,\theta)\rightarrow -\infty,
\end{equation*} 
which together with continuously of $ \psi _{w} $ imply that there exists $ (t_{w},\theta _{w})\in [0,\infty)\times [0,\infty) $ such that
\begin{equation*}
\psi _{w}(t_{w},\theta _{w})=\max _{(t,\theta)\in [0,\infty)\times [0,\infty)}I_{\lambda}(tw^{+}+\theta w^{-}).
\end{equation*}

Moreover, fixed $ t\geq 0 $, by item (2) of Lemma \ref{Lem1.1}, it follows that
\begin{equation*}
\psi_{w}(t,0)=I_{\lambda}(t w^{+})<I_{\lambda}(tw^{+})+I_{\lambda}(\theta w^{-})\leq I_{\lambda}(tw^{+}+\theta w^{-})=\psi_{w}(t,\theta),
\end{equation*}
whenever $ 0<\theta\ll 1 $. Consequently, $ (t,0) $ is not a maximizer of $ \psi_{w} $, for all $ t\geq 0 $. Analogously, $ (0,\theta ) $ is not a maximizer of $ \psi_{w} $, for all $ \theta\geq 0 $. Thus, we have shown that maximizer $ (t_{w},\theta _{w}) $ is a inner point of $ [0,\infty)\times [0,\infty) $. Then, $ (t_{w},\theta _{w}) $ is a critical point of $ \psi_{w} $ with $ t_{w}>0 $ and $ \theta_{w}>0 $. Therefore, there exists $ (t_{w},\theta_{w})\in (0,\infty)\times (0,\infty) $ such that $ \Psi_{w}(t_{w},\theta_{w})=(0,0) $, namely, $ t_{w}w^{+}+\theta_{w}w^{+}\in \mathcal{M}_{\lambda} $.

Now, we will work to get the uniqueness. In fact, it is sufficient to show that if $ w\in \mathcal{M}_{\lambda} $ and $ tw^{+}+\theta w^{-}\in \mathcal{M}_{\lambda} $ with $ t>0 $ and $ \theta>0 $, then $ (t,\theta)=(1,1) $. Suppose that $ w\in \mathcal{M}_{\lambda} $ and $ tw^{+}+\theta w^{-}\in\mathcal{M}_{\lambda} $. Then, $ \langle I'_{\lambda}(tw^{+}+\theta w^{-}),tw^{+}\rangle=0 $ and $\langle I'_{\lambda}(w),w^{+} \rangle $=0, namely,

\begin{equation*}
M(\|tw^{+}+\theta w^{-}\|^{p})\langle (-\Delta_{p})^{s}(tw^{+}+\theta w^{-}), tw^{+}\rangle =\lambda \int _{\Omega}f(x,t w^{+})tw^{+}dx+t^{q}\int _{\Omega}\frac{|w^{+}|^{q}}{|x|^{\alpha}}dx,
\end{equation*}
\begin{equation*}
M(\|w\|^{p})\left \langle (-\Delta_{p})^{s}w,w^{+}\right \rangle=\lambda \int _{\Omega}f(x,w^{+})w^{+}dx+\int _{\Omega}\frac{|w^{+}|^{q}}{|x|^{\alpha}}dx.
\end{equation*}

Without less of generality, we can suppose that $ \theta\leq t $. Then,
\begin{eqnarray}\label{Equa30}
\|tw^{+}+\theta w^{-}\|^{p}&=&A(t w^{+}+\theta w^{-})+B(t w^{+}+\theta w^{-})\nonumber\\
&\leq& t^{p}(A(w)+B(w))=t^{p}\|w\|^{p}
\end{eqnarray}
and
\begin{eqnarray}\label{Equa31}
\langle (-\Delta_{p})^{s}(t w^{+}+\theta w^{-}),t w^{+}\rangle&=& t^{p}A^{+}(w)+B^{+}(t w^{+}+\theta w^{-})\nonumber\\
&\leq &t^{p}(A^{+}(w)+B^{+}(w))=t^{p}\langle (-\Delta_{p})^{s}w,w^{+}\rangle.
\end{eqnarray}
Since $ M $ is increasing, by \eqref{Equa30} and \eqref{Equa31}, follows that
\begin{eqnarray}\label{Equa32}
\frac{M(t^{p}\|w\|^{p})}{t^{\gamma-p}\|w\|^{\gamma-p}}\langle (-\Delta_{p})^{s}w,w^{+}\rangle\|w\|^{\gamma-p}\geq \lambda \int _{\Omega}\frac{f(x,tw^{+})}{(tw^{+})^{\gamma-1}}(w^{+})^{\gamma}dx+t^{q-\gamma}\int _{\Omega}\frac{|w^{+}|^{q}}{|x|^{\alpha}}dx.
\end{eqnarray}

On the other hand,
\begin{equation}\label{Equa33}
\frac{M(\|w\|^{p})}{\|w\|^{\gamma-p}}\langle (-\Delta_{p})^{s}w,w^{+}\rangle\|w\|^{\gamma-p}=\lambda\int _{\Omega}\frac{f(x,w^{+})}{(w^{+})^{\gamma-1}}(w^{+})^{\gamma}dx+\int _{\Omega}\frac{|w^{+}|^{q}}{|x|^{\alpha}}dx.
\end{equation}

Combining \eqref{Equa32} and \eqref{Equa33}, we get
\begin{eqnarray*}
&&\left (\frac{M(t^{p}\|w\|^{p})}{t^{\gamma-p}\|w\|^{\gamma-p}}-\frac{M(\|w\|^{p})}{\|w\|^{\gamma-p}}\right )\langle (-\Delta_{p})^{s}w,w^{+}\rangle\|w\|^{\gamma-p}\\
&& \;\;\;\;\;\;\;\;\;\;\geq\lambda \int _{\Omega}\left (\frac{f(x,tw^{+})}{(tw^{+})^{\gamma-1}}-\frac{f(x,w^{+})}{(w^{+})^{\gamma-1}}\right )(w^{+})^{\gamma}dx+(t^{q-\gamma}-1)\int _{\Omega}\frac{|w^{+}|^{q}}{|x|^{\alpha}}dx.
\end{eqnarray*}

From \hyperlink{M2}{$ (M_{2}) $}, \hyperlink{3}{$ (f_{3}) $} and the last inequality, we get $ 0<\theta\leq t\leq 1  $. Using the same method, but now with the equations $ \langle I'_{\lambda}(tw^{+}+\theta w^{-}),\theta w^{-} \rangle=0 $ and $ \langle I'_{\lambda}(w),w^{-} \rangle=0 $, we obtain $ 1\leq \theta \leq t $, which implies $ t=\theta=1 $.

Finally, by uniqueness of $ (t_{w},\theta_{w}) $, it follows that 
\begin{equation*}
 I_{\lambda}(tw^{+}+\theta w^{-})<I_{\lambda}(t_{w}w^{+}+\theta _{w}w^{-}),
\end{equation*}
for all $ (t,\theta)\neq (t_{w},\theta_{w}) $. 
\end{proof}

\begin{corollary}\label{Cor2}
For each $ w\in W_{0}^{s,p}(\Omega) $ with $ w^{\pm}\neq 0 $ and $ \langle I'_{\lambda}(w), w^{\pm}\rangle \leq 0$, there exists unique pair $ (t_{w},\theta_{w})\in (0,1]\times (0,1] $ such that $ t_{w}w^{+}+\theta _{w}w^{-}\in \mathcal{M}_{\lambda} $.
\end{corollary}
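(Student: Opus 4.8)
The existence and uniqueness of a pair $(t_w,\theta_w)\in(0,\infty)\times(0,\infty)$ with $t_w w^{+}+\theta_w w^{-}\in\mathcal{M}_\lambda$ are already supplied by Lemma \ref{lem3}, so the only new content here is the localization $(t_w,\theta_w)\in(0,1]\times(0,1]$. This is exactly the two–variable analogue of Corollary \ref{Cor1}, and my plan is to reproduce, for the point $t_w w^{+}+\theta_w w^{-}\in\mathcal{M}_\lambda$, the same comparison carried out in the uniqueness part of Lemma \ref{lem3}, but now measuring it against the base point $(1,1)$, at which the hypothesis furnishes only the inequalities $\langle I'_\lambda(w),w^{\pm}\rangle\le0$ in place of equalities.

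Since both the hypothesis and the conclusion are symmetric under interchanging $w^{+}$ with $w^{-}$ (and thus $t_w$ with $\theta_w$), I would assume without loss of generality that $\theta_w\le t_w$ and prove only $t_w\le1$; the bound $\theta_w\le t_w\le1$ then follows for free. Arguing by contradiction, suppose $t_w>1$ and set $v:=t_w w^{+}+\theta_w w^{-}$. The membership $v\in\mathcal{M}_\lambda$ gives $\langle I'_\lambda(v),t_w w^{+}\rangle=0$, namely
\begin{equation*}
M(\|v\|^{p})\langle(-\Delta_{p})^{s}v,t_w w^{+}\rangle=\lambda\int_{\Omega}f(x,t_w w^{+})t_w w^{+}\,dx+t_w^{q}\int_{\Omega}\frac{|w^{+}|^{q}}{|x|^{\alpha}}\,dx,
\end{equation*}
while the hypothesis $\langle I'_\lambda(w),w^{+}\rangle\le0$ reads
\begin{equation*}
M(\|w\|^{p})\langle(-\Delta_{p})^{s}w,w^{+}\rangle\le\lambda\int_{\Omega}f(x,w^{+})w^{+}\,dx+\int_{\Omega}\frac{|w^{+}|^{q}}{|x|^{\alpha}}\,dx.
\end{equation*}

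Because $\theta_w\le t_w$, the nonlocal estimates \eqref{Equa30} and \eqref{Equa31} give $\|v\|^{p}\le t_w^{p}\|w\|^{p}$ and $\langle(-\Delta_{p})^{s}v,t_w w^{+}\rangle\le t_w^{p}\langle(-\Delta_{p})^{s}w,w^{+}\rangle$; since $M$ is increasing by \hyperlink{M1}{$(M_{1})$}, I would insert these into the first identity, divide by $t_w^{\gamma}$, and subtract the second inequality precisely as in \eqref{Equa32}--\eqref{Equa33}, obtaining
\begin{eqnarray*}
&&\left(\frac{M(t_w^{p}\|w\|^{p})}{t_w^{\gamma-p}\|w\|^{\gamma-p}}-\frac{M(\|w\|^{p})}{\|w\|^{\gamma-p}}\right)\langle(-\Delta_{p})^{s}w,w^{+}\rangle\|w\|^{\gamma-p}\\
&&\qquad\ge\lambda\int_{\Omega}\left(\frac{f(x,t_w w^{+})}{(t_w w^{+})^{\gamma-1}}-\frac{f(x,w^{+})}{(w^{+})^{\gamma-1}}\right)(w^{+})^{\gamma}\,dx+(t_w^{q-\gamma}-1)\int_{\Omega}\frac{|w^{+}|^{q}}{|x|^{\alpha}}\,dx.
\end{eqnarray*}
By \hyperlink{M2}{$(M_{2})$} the map $t\mapsto M(t^{p}\|w\|^{p})\,t^{-(\gamma-p)}\|w\|^{-(\gamma-p)}$ is decreasing, so for $t_w>1$ the left–hand side is $\le0$ (recall $\langle(-\Delta_{p})^{s}w,w^{+}\rangle\ge0$ by \eqref{Equa62}). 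On the other hand, \hyperlink{3}{$(f_{3})$} makes $t\mapsto f(x,tw^{+})/(tw^{+})^{\gamma-1}$ increasing on $t>0$, forcing the first term on the right to be $\ge0$, while $q>\gamma$ and $w^{+}\ne0$ make $(t_w^{q-\gamma}-1)\int_{\Omega}|w^{+}|^{q}|x|^{-\alpha}\,dx>0$; hence the right–hand side is strictly positive. This contradiction would yield $t_w\le1$, and therefore $\theta_w\le t_w\le1$.

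The only genuinely delicate point is the faithful transcription of the strict nonlocal inequalities \eqref{Equa30}--\eqref{Equa31}, which hinge on $\theta_w\le t_w$ and encode the cross interaction between $w^{+}$ and $w^{-}$; everything else is sign bookkeeping identical to the uniqueness argument of Lemma \ref{lem3}, the single change being that the base point $(1,1)$ now contributes an inequality rather than an equation. I therefore expect no essential obstacle beyond carrying over those estimates correctly and justifying the symmetry reduction $\theta_w\le t_w$.
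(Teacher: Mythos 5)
Your proposal is correct and follows essentially the same route as the paper: the paper's proof simply invokes Lemma \ref{lem3} for existence and uniqueness and then says to repeat the uniqueness comparison of that lemma, which is exactly the argument you spell out — replacing the base-point equalities by the hypothesized inequalities $\langle I'_{\lambda}(w),w^{\pm}\rangle\le 0$, using \eqref{Equa30}--\eqref{Equa31} with $(M_{1})$, $(M_{2})$, $(f_{3})$ and $q>\gamma$ to force $t_{w}\le 1$ after the symmetry reduction $\theta_{w}\le t_{w}$. (Only a cosmetic remark: the estimates \eqref{Equa30}--\eqref{Equa31} are non-strict, not strict as your closing paragraph suggests, but you use them correctly as non-strict inequalities.)
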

\begin{proof}
By Lemma \eqref{lem3}, the existence and uniqueness of pair $ (t_{w},\theta _{w})\in (0,\infty)\times (0,\infty) $ are ensured and proceeding as in the proof of that lemma, we obtain $ (t_{w},\theta _{w})\in (0,1]\times (0,1] $.
\end{proof}

\begin{lemma}\label{Lem4}
\noindent (1) There exists $ \kappa:=\kappa_{\lambda}>0 $ such that for all $ u\in \mathcal{N}_{\lambda} $,
\begin{equation*}
\|u\|\geq \kappa 
\end{equation*}
and for all $ w\in \mathcal{M}_{\lambda} $,
\begin{equation*}
\|w^{+}\|, \,\|w^{-}\|\geq \kappa.
\end{equation*}

\noindent (2) For all $ u\in \mathcal{N}_{\lambda} $,
\begin{equation*}
I_{\lambda}(u)\geq \left (\frac{1}{\gamma}-\frac{1}{\mu}\right )M(1)\min\{1,\|u\|^{\gamma-p}\}\|u\|^{p}.
\end{equation*}

\noindent (3) If $ \{u_{n}\}\subset \mathcal{N}_{\lambda} $ and $ \{w_{n}\}\subset \mathcal{M}_{\lambda} $ are bounded sequences, then
\begin{equation*}
\liminf_{n\rightarrow \infty}\int _{\Omega}\frac{|u_{n}|^{q}}{|x|^{\alpha}}dx>0 \;\;\operatorname{and}\;\; \liminf_{n\rightarrow \infty}\int_{\Omega}\frac{|w_{n}^{\pm}|^{q}}{|x|^{\alpha}}dx>0.
\end{equation*}
\end{lemma}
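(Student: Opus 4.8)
The plan is to handle the three items in order, since each later item leans on the earlier ones. For item (1), I would first record the pointwise estimate from Lemma \ref{Lem1.1}(2): $\langle I'_{\lambda}(v),v\rangle \geq R\|v\|^{\gamma}$ for every $v$ with $\|v\|\leq R$. If $u\in\mathcal{N}_{\lambda}$, then $u\neq 0$ and $\langle I'_{\lambda}(u),u\rangle=0$, so $\|u\|\leq R$ would give $0\geq R\|u\|^{\gamma}>0$, a contradiction; hence $\|u\|>R$ and one takes $\kappa=R$. For $w\in\mathcal{M}_{\lambda}$ I would apply the same reasoning to $w^{\pm}$ via the splitting inequality $\langle I'_{\lambda}(w),w^{\pm}\rangle\geq\langle I'_{\lambda}(w^{\pm}),w^{\pm}\rangle$ displayed after \eqref{Equa55}: since $\langle I'_{\lambda}(w),w^{\pm}\rangle=0$ and $w^{\pm}\neq 0$, the bound $\langle I'_{\lambda}(w^{\pm}),w^{\pm}\rangle\geq R\|w^{\pm}\|^{\gamma}$ (valid once $\|w^{\pm}\|\leq R$) again forces $\|w^{\pm}\|>R$.

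For the energy estimate in item (2), I would use $u\in\mathcal{N}_{\lambda}$ to write $I_{\lambda}(u)=I_{\lambda}(u)-\frac{1}{\mu}\langle I'_{\lambda}(u),u\rangle$ with $\mu\in(\gamma,q)$ from \hyperlink{2}{$(f_{2})$}. Regrouping produces three contributions: the Kirchhoff term $\frac{1}{p}\widehat{M}(\|u\|^{p})-\frac{1}{\mu}M(\|u\|^{p})\|u\|^{p}$, the nonlinearity term $\lambda\int_{\Omega}\bigl(\frac{1}{\mu}f(x,u)u-F(x,u)\bigr)dx$, and the Hardy term $\bigl(\frac{1}{\mu}-\frac{1}{q}\bigr)\int_{\Omega}\frac{|u|^{q}}{|x|^{\alpha}}dx$. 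The second is nonnegative by \hyperlink{2}{$(f_{2})$} (equivalently \eqref{propf2}), and the third because $\mu<q$. For the first I would split off $\frac{1}{p}\widehat{M}(t)-\frac{1}{\gamma}M(t)t$, which is nonnegative by \eqref{R2}, leaving $\bigl(\frac{1}{\gamma}-\frac{1}{\mu}\bigr)M(t)t$ with $t=\|u\|^{p}$; the lower bound \eqref{R3} gives $M(t)t\geq M(1)\min\{1,\|u\|^{\gamma-p}\}\|u\|^{p}$, which is exactly the claimed estimate since $\gamma<\mu$.

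Item (3) is where the real work lies, and I would argue by contradiction. Suppose $\liminf_{n}\int_{\Omega}\frac{|u_{n}|^{q}}{|x|^{\alpha}}dx=0$ for a bounded $\{u_{n}\}\subset\mathcal{N}_{\lambda}$; passing to a subsequence, $u_{n}\rightharpoonup u$ in $W_{0}^{s,p}(\Omega)$ and $u_{n}\to u$ a.e. Because $q<p_{\alpha}^{*}$, the embedding $W_{0}^{s,p}(\Omega)\hookrightarrow L^{q}(\Omega,|x|^{-\alpha}dx)$ is compact, so $\int_{\Omega}\frac{|u_{n}|^{q}}{|x|^{\alpha}}dx\to\int_{\Omega}\frac{|u|^{q}}{|x|^{\alpha}}dx=0$, forcing $u=0$. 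Lemma \ref{Lemm1} then gives $\int_{\Omega}f(x,u_{n})u_{n}dx\to\int_{\Omega}f(x,u)u\,dx=0$, and the Nehari identity $M(\|u_{n}\|^{p})\|u_{n}\|^{p}=\lambda\int_{\Omega}f(x,u_{n})u_{n}dx+\int_{\Omega}\frac{|u_{n}|^{q}}{|x|^{\alpha}}dx$ shows $M(\|u_{n}\|^{p})\|u_{n}\|^{p}\to 0$. This contradicts item (1): since $\|u_{n}\|\geq\kappa$ and $M$ is increasing with $M(t)>0$ for $t>0$, we have $M(\|u_{n}\|^{p})\|u_{n}\|^{p}\geq M(\kappa^{p})\kappa^{p}>0$. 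For $\{w_{n}\}\subset\mathcal{M}_{\lambda}$ I would run the same scheme on $w_{n}^{\pm}$ separately: testing $\langle I'_{\lambda}(w_{n}),w_{n}^{\pm}\rangle=0$ and using \eqref{Equa62} together with the monotonicity of $M$ and $\|w_{n}\|^{p}\geq\|w_{n}^{\pm}\|^{p}$ yields $\int_{\Omega}\frac{|w_{n}^{\pm}|^{q}}{|x|^{\alpha}}dx\geq M(\|w_{n}^{\pm}\|^{p})\|w_{n}^{\pm}\|^{p}-\lambda\int_{\Omega}f(x,w_{n}^{\pm})w_{n}^{\pm}dx$, after which the compactness, Lemma \ref{Lemm1} and item (1) close the argument as before.

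The main obstacle is the non-degeneracy encoded in item (3): the contradiction depends crucially on the compactness of the Hardy--Sobolev embedding, which holds only in the subcritical range $q<p_{\alpha}^{*}$, and on the strict positivity $M(\kappa^{p})>0$ coming from $M(t)>0$ for $t>0$ in the degenerate regime. I expect the delicate point to be the reduction for $\mathcal{M}_{\lambda}$, namely controlling the nonlocal cross interactions so that the single-sign pieces $w_{n}^{\pm}$ inherit a Nehari-type inequality; this is where the careful use of \eqref{Equa62} and the superadditivity of $\widehat{M}$ must be verified, and it is the step I would check most carefully.
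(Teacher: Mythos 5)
Your items (1) and (2) are fine. Item (2) coincides with the paper's computation, and for item (1) you take a legitimate shortcut the paper does not: instead of re-deriving the small-norm estimate from \eqref{R3}, $(f_{1})$ and the embeddings (which is what the paper does, arriving at \eqref{Equa59}), you reuse Lemma \ref{Lem1.1}(2) together with the splitting inequalities following \eqref{Equa55}; since Lemma \ref{Lem1.1}(2) uses only continuity, not compactness, of the embeddings, that argument is valid for every $q\in(p,p_{\alpha}^{*}]$.

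The genuine gap is in item (3). Your contradiction argument rests on the compactness of the embedding $W_{0}^{s,p}(\Omega)\hookrightarrow L^{q}(\Omega,\frac{dx}{|x|^{\alpha}})$, which fails when $q=p_{\alpha}^{*}$ --- you flag this yourself, but you treat it as a restriction built into the statement rather than as a defect of the proof. The lemma carries no restriction on $q$, and the paper really does need item (3) at the critical exponent: in the proof of Lemma \ref{Lem7}, the uniform lower bound $|\langle G'(v_{n}),v_{n}\rangle|\geq d>0$ is obtained precisely from item (3) applied to the bounded sequence $\{v_{n}\}\subset\mathcal{N}_{\lambda}$, because $\langle G'(v_{n}),v_{n}\rangle\leq(\gamma-q)\int_{\Omega}\frac{|v_{n}|^{q}}{|x|^{\alpha}}dx<0$; and Lemma \ref{Lem7} is invoked with $q=p_{\alpha}^{*}$ in the proof of Theorem \ref{Theo11}(2) to produce the signed solutions in the critical case. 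So a proof of item (3) that only covers $q<p_{\alpha}^{*}$ does not support the way the lemma is used.

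The repair is the paper's own argument, which is purely quantitative and never invokes compactness; note also that the cross-term control you singled out as the delicate point (via \eqref{Equa62} and $(M_{1})$) is the easy part, and you handle it correctly. From $\langle I'_{\lambda}(w_{n}),w_{n}^{\pm}\rangle=0$, exactly as in item (1), one gets a constant $c_{0}>0$ depending only on $\kappa$ (via \eqref{R3}) such that $c_{0}\leq\lambda\int_{\Omega}f(x,w_{n}^{\pm})w_{n}^{\pm}dx+\int_{\Omega}\frac{|w_{n}^{\pm}|^{q}}{|x|^{\alpha}}dx$. Now estimate the $f$-term by $(f_{1})$ with $r\in(p,q)$: the $\epsilon(|t|^{\gamma}+|t|^{p^{*}})$ part contributes at most $\epsilon C_{\lambda}$ by the boundedness of $\{w_{n}\}$, while for the $|t|^{r}$ part, H\"older's inequality with the weight $|x|^{-\alpha}$ on the bounded domain $\Omega$ gives $\int_{\Omega}|w_{n}^{\pm}|^{r}dx\leq C\left(\int_{\Omega}\frac{|w_{n}^{\pm}|^{q}}{|x|^{\alpha}}dx\right)^{r/q}$. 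Writing $\eta_{n}:=\int_{\Omega}\frac{|w_{n}^{\pm}|^{q}}{|x|^{\alpha}}dx$, this yields $c_{0}\leq\epsilon C_{\lambda}+C_{\lambda,\epsilon,r}\left(\eta_{n}^{r/q}+\eta_{n}\right)$, and choosing $\epsilon$ with $\epsilon C_{\lambda}<c_{0}/2$ forces $\liminf_{n}\eta_{n}>0$, uniformly for all $q\leq p_{\alpha}^{*}$. The same estimate with $u_{n}$ in place of $w_{n}^{\pm}$ settles the $\mathcal{N}_{\lambda}$ case.
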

\begin{proof}
	
\noindent \textbf{(1)} If $ w\in \mathcal{M}_{\lambda} $, then $ \langle I'_{\lambda}(w),w^{\pm}\rangle=0 $. Using \eqref{R3}, \hyperlink{M1}{$ (M_{1}) $}, \hyperlink{1}{$ (f_{1}) $} and the H{\"o}lder's inequality, for some constant $ C_{\epsilon,r}>0 $ and $ \|w\|\leq 1 $, it follows that
\begin{eqnarray*}
M(1)\|w^{\pm}\|^{\gamma}&\leq& M(\|w^{\pm}\|^{p})\|w^{\pm}\|^{p}\leq M(\|w\|^{p})\langle (-\Delta_{p})^{s}w,w^{\pm}\rangle=\lambda \int _{\Omega}f(x,w^{\pm})w^{\pm}dx+\int _{\Omega}\frac{|w^{\pm}|^{q}}{|x|^{\alpha}}dx\\
&\leq&\lambda\epsilon \int _{\Omega}(|w^{\pm}|^{\gamma}+|w^{\pm}|^{p^{*}})dx+\lambda C_{\epsilon,r}\int _{\Omega}|w^{\pm}|^{r}dx+\left (\int _{\Omega}\frac{1}{|x|^{\alpha}}dx\right )^{\frac{p_{\alpha}^{*}-q}{p_{\alpha}^{*}}}\left (\int _{\Omega}\frac{|w^{\pm}|^{p_{\alpha}^{*}}}{|x|^{\alpha}}dx\right )^{\frac{q}{p_{\alpha}^{*}}}\\
\end{eqnarray*}
From the continuity of embedding $ W_{0}^{s,p}(\Omega)\hookrightarrow L^{t}(\Omega,\frac{dx}{|x|^{\beta}}) $ with $ 0\leq \beta<ps $ and $ t\in [1,p_{\beta}^{*}] $, we have that
\begin{eqnarray*}
M(1)\|w^{\pm}\|^{\gamma}&\leq&C\lambda\epsilon \|w^{\pm}\|^{\gamma}+\lambda C_{\epsilon}\|w^{\pm}\|^{p^{*}}+\lambda C_{\epsilon,r}\|w^{\pm}\|^{r}+(C_{\alpha})^{\frac{p_{\alpha}^{*}-q}{p_{\alpha}^{*}}}C\|w^{\pm}\|^{q},
\end{eqnarray*}
for some constants $ C,C_{\epsilon}, C_{\epsilon,r}, C_{\alpha}>0 $. Therefore,
\begin{equation}\label{Equa59}
(M(1)-C\lambda\epsilon)\|w^{\pm}\|^{\gamma}\leq  \lambda C_{\epsilon}\|w^{\pm}\|^{p^{*}}+\lambda C_{\epsilon,r}\|w^{\pm}\|^{r}+(C_{\alpha})^{\frac{p_{\alpha}^{*}-q}{p_{\alpha}^{*}}}C\|w^{\pm}\|^{q}.
\end{equation}
Taking $ \epsilon>0 $ such that $ M(1)-C \lambda\epsilon >0 $, we get the conclusion desired, since $ r,q,p^{*}>\gamma $. If $ u\in \mathcal{N}_{\lambda} $, we obtain the same estimative with $ \|u\| $ instead of $ \|w^{\pm}\| $.

\noindent \textbf{(2)} Given $ u\in \mathcal{N}_{\lambda} $, by definition, we have $ \langle I_{\lambda}'(u),u\rangle =0 $. Then, by \hyperlink{3}{$ (f_{2}) $}, \eqref{R2}, \eqref{R3} and since $ \gamma<\mu<q $, we obtain 
\begin{eqnarray*}
I_{\lambda}(u)&=&I_{\lambda}(u)-\frac{1}{\mu}\langle I'_{\lambda}(u),u\rangle\\
&=&\frac{1}{p}\widehat{M}(\|u\|^{p})-\frac{1}{\mu}M(\|u\|^{p})\|u\|^{p}\\
&& +\left (\frac{1}{\mu}-\frac{1}{q}\right )\int _{\Omega} \frac{|u|^{q}}{|x|^{\alpha}}dx + \lambda\int _{\Omega}\frac{1}{\mu}f(x,u)u -F(x,u)dx\\
&\geq&\left (\frac{1}{\gamma}-\frac{1}{\mu}\right )M(1)\min\{1,\|u\|^{\gamma-p}\}\|u\|^{p}.
\end{eqnarray*}

\noindent \textbf{(3)} Again, we will show the property only for $ \{w_{n}\} \subset \mathcal{M}_{\lambda} $. Given $ \epsilon>0 $, using item (1) of this lemma, assumption \hyperlink{1}{$ (f_{1}) $} with $ r\in (p,q) $, the boundedness of $ \{w_{n}\} $ and the H{\"o}lder's inequality, we obtain
\begin{eqnarray*}
M(1)\kappa^{\gamma} \leq M(1)\|w_{n}^{\pm}\|^{\gamma}&\leq& \lambda \epsilon \int _{\Omega}|w_{n}^{\pm}|^{\gamma}dx+\lambda \epsilon\int _{\Omega} |w_{n}^{\pm}|^{p^{*}}dx+\lambda C_{r,\epsilon}\int_{\Omega}|w_{n}^{\pm}|^{r}dx+\int_{\Omega}\frac{|w_{n}^{\pm}|^{q}}{|x|^{\alpha}}dx\\
&\leq&\epsilon C_{\lambda}+C_{\lambda,r,\epsilon}\left (\left (\int _{\Omega}\frac{|w_{n}^{\pm}|^{q}}{|x|^{\alpha}}dx\right )^{\frac{r}{q}}+\int_{\Omega}\frac{|w_{n}^{\pm}|^{q}}{|x|^{\alpha}}dx\right ),
\end{eqnarray*}
which implies 
\begin{equation*}
\liminf_{n\rightarrow \infty}\int _{\Omega}\frac{|w_{n}^{\pm}|^{q}}{|x|^{\alpha}}dx>0.
\end{equation*}
\end{proof}

We use the idea of Tarantello \cite{Tar} to get the following result:
\begin{lemma}\label{lem5}
For each $ u\in \mathcal{N}_{\lambda} $, there exists $ \epsilon>0 $ and a differentiable function $ \xi:B(0,\epsilon)\rightarrow [0,\infty) $ such that $ \xi (0)=1 $, $ \xi(v)(u-v)\in \mathcal{N}_{\lambda} $ for all $ v\in B(0,\epsilon) $ and
\begin{equation*}
\langle \xi'(0),v\rangle= \frac{p \overline{M}'(\|u\|^{p})\langle (-\Delta)^{p}(u),v\rangle-\lambda \int_{\Omega}\partial_{t}\overline{f}(x,u)v dx-q \int _{\Omega}\frac{|u|^{q-2}uv }{|x|^{\alpha}}dx}{p\overline{M}'(\|u\|^{p})\|u\|^{p}-\lambda \int _{\Omega}\partial_{t}\overline{f}(x,u)u-q\int_{\Omega}\frac{|u|^{q}}{|x|^{\alpha}}dx},\;\;\forall v\in W_{0}^{s,p}(\Omega)
.\end{equation*} 
\end{lemma}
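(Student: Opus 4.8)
The plan is to realize this as a direct application of the Implicit Function Theorem, following Tarantello. First I would introduce the auxiliary map $G\colon W_0^{s,p}(\Omega)\times[0,\infty)\to\mathbb{R}$ defined by $G(v,t)=\langle I_\lambda'(t(u-v)),t(u-v)\rangle$, which, writing $\overline M(t)=M(t)t$ and $\overline f(x,t)=f(x,t)t$, takes the explicit form
\begin{equation*}
G(v,t)=\overline M\bigl(t^p\|u-v\|^p\bigr)-\lambda\int_\Omega\overline f(x,t(u-v))\,dx-t^q\int_\Omega\frac{|u-v|^q}{|x|^\alpha}\,dx.
\end{equation*}
Because $\overline M$ is continuously differentiable on $[0,\infty)$ and $\overline f(x,\cdot)$ is $C^1$ with the growth control furnished by \hyperlink{0}{$(f_0)$}, the map $G$ is of class $C^1$. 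Moreover $u\in\mathcal N_\lambda$ gives exactly $G(0,1)=\langle I_\lambda'(u),u\rangle=0$, so $(0,1)$ is the point at which the theorem should be applied.

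The crux is to verify that the partial derivative in $t$ does not vanish at $(0,1)$. A direct differentiation yields
\begin{equation*}
\partial_tG(0,1)=p\,\overline M'(\|u\|^p)\|u\|^p-\lambda\int_\Omega\partial_t\overline f(x,u)u\,dx-q\int_\Omega\frac{|u|^q}{|x|^\alpha}\,dx,
\end{equation*}
which is precisely the denominator appearing in the statement. To show it is strictly negative I would combine three ingredients: the inequality \eqref{R4} coming from \hyperlink{M2}{$(M_2)$}, namely $p\,\overline M'(\|u\|^p)\|u\|^p<\gamma\,\overline M(\|u\|^p)$; the pointwise inequality \eqref{propf1} coming from \hyperlink{3}{$(f_3)$}, which yields $\int_\Omega\partial_t\overline f(x,u)u\,dx\ge\gamma\int_\Omega f(x,u)u\,dx$; and the Nehari identity $\overline M(\|u\|^p)=\lambda\int_\Omega f(x,u)u\,dx+\int_\Omega\frac{|u|^q}{|x|^\alpha}\,dx$. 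Substituting these into the expression above makes the $\overline M$ and $f$ contributions cancel and leaves $\partial_tG(0,1)<(\gamma-q)\int_\Omega\frac{|u|^q}{|x|^\alpha}\,dx$, which is negative because $\gamma<q$ by \hyperlink{M2}{$(M_2)$} while $\int_\Omega\frac{|u|^q}{|x|^\alpha}\,dx>0$ since $u\neq0$. This strict sign is the main obstacle, and it is exactly where the structural hypotheses on $M$ and $f$ enter; everything else is routine.

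With $\partial_tG(0,1)\neq0$ in hand, the Implicit Function Theorem produces $\epsilon>0$ and a differentiable function $\xi\colon B(0,\epsilon)\to\mathbb{R}$ with $\xi(0)=1$ and $G(v,\xi(v))=0$ for all $v\in B(0,\epsilon)$. Shrinking $\epsilon$ if necessary, continuity of $\xi$ keeps $\xi(v)>0$ and $u-v\neq0$, so $\xi(v)(u-v)\neq0$ and the identity $G(v,\xi(v))=0$ reads $\langle I_\lambda'(\xi(v)(u-v)),\xi(v)(u-v)\rangle=0$, i.e.\ $\xi(v)(u-v)\in\mathcal N_\lambda$, as required. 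Finally, the formula for $\xi'(0)$ follows from the standard differentiation rule $\langle\xi'(0),v\rangle=-D_vG(0,1)[v]/\partial_tG(0,1)$; computing the numerator by differentiating each term of $G$ in the direction $v$ at $(0,1)$ — using $\frac{d}{d\tau}\|u-\tau v\|^p\big|_{\tau=0}=-p\langle(-\Delta_p)^su,v\rangle$ for the Kirchhoff term and the chain rule for the $f$ and Hardy terms — reproduces exactly the quotient displayed in the statement, the only care needed being that the three signs in the numerator match the overall minus sign from the theorem.
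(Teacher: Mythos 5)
Your proposal is correct and follows essentially the same route as the paper: the same Tarantello-style auxiliary map $G(v,t)=\langle I_\lambda'(t(u-v)),t(u-v)\rangle$, the same verification that $\partial_t G(0,1)<0$ via \eqref{R4}, \eqref{propf1}, the Nehari identity and $\gamma<q$, and the same application of the Implicit Function Theorem yielding the stated formula for $\xi'(0)$. Your additional remarks (shrinking $\epsilon$ so that $\xi(v)(u-v)\neq 0$, and the explicit computation of $D_vG(0,1)$) only make explicit details the paper leaves implicit.
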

\begin{proof}
For each $ u\in \mathcal{N}_{\lambda} $, define $ G_{u}:\mathbb{R}\times W_{0}^{s,p}(\Omega) \rightarrow \mathbb{R} $ by
\begin{equation*}
G_{u}(\xi,w)=\langle I'_{\lambda}(\xi (u-w)),\xi (u-w)\rangle.
\end{equation*}
Then, $ G_{u}(1,0)=\langle I'_{\lambda}(u),u\rangle=0 $ and 
\begin{eqnarray*}
\partial_{t} G_{u}(t,w)\arrowvert_{(1,0)}&=&p\overline{M}'(\|u\|^{p})\|u\|^{p}-\lambda \int _{\Omega}\partial_{t} \overline{f}(x,u)u dx-q \int _{\Omega}\frac{|u|^{q}}{|x|^{\alpha}}dx\\
&\leq& \gamma \overline{M}(\|u\|^{p})-\lambda \int _{\Omega} \partial_{t}\overline{f}(x,u)u dx-q\int _{\Omega}\frac{|u|^{q}}{|x|^{\alpha}}dx\\
&=&\lambda \int _{\Omega}\gamma f(x,u)u-\partial_{t}\overline{f}(x,u)u dx +(\gamma-q)\int _{\Omega} \frac{|u|^{q}}{|x|^{\alpha}}dx<0,
\end{eqnarray*}
by \ref{R4}, \ref{propf1} and since $ \gamma<q $. Applying Implicit Function Theorem, there exists $ \epsilon >0 $  and $ \xi: B(0,\epsilon) \rightarrow [0,\infty) $ differentiable such that 
\begin{equation*}
\langle \xi'(0),v\rangle = -[\partial_{t}G_{u}(1,0)]^{-1}[\partial_{w}G_{u}(1,0)]v, \; \forall v\in W_{0}^{s,p}(\Omega)
\end{equation*}
and
\begin{equation*}
G_{u}(\xi(v),v)=0,
\end{equation*}
for all $ v\in B(0,\epsilon) $, which completes the proof.
\end{proof}

\begin{lemma}\label{Lem6}
For each $ w\in \mathcal{M}_{\lambda} $, we have that $ \det J_{(1,1)}\Psi_{w} >0$, where $ J_{(1,1)}\Psi_{w} $ is the Jacobian matrix of $ \Psi_{w} $ in pair $ (1,1) $.
\end{lemma}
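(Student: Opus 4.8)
The plan is to reduce the claim to the non-degeneracy of the Hessian of $\psi_w$ at its maximizer, and then to extract a strict diagonal-dominance inequality from the monotonicity hypotheses \hyperlink{M2}{$(M_2)$} and \hyperlink{3}{$(f_3)$}. Since $w\in\mathcal M_\lambda$, the pair $(1,1)$ is a critical point of $\psi_w$, so $\Psi_w(1,1)=(0,0)$; writing $\Psi_w=(t\,\partial_t\psi_w,\theta\,\partial_\theta\psi_w)$ and using $\partial_t\psi_w(1,1)=\partial_\theta\psi_w(1,1)=0$, one checks that $J_{(1,1)}\Psi_w$ equals the (symmetric) Hessian of $\psi_w$ at $(1,1)$. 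Hence
\[
\det J_{(1,1)}\Psi_w=\partial_{tt}\psi_w(1,1)\,\partial_{\theta\theta}\psi_w(1,1)-\big(\partial_{t\theta}\psi_w(1,1)\big)^2,
\]
and it suffices to show this Hessian is negative definite (this is consistent with $(1,1)$ being the unique maximizer from Lemma \ref{lem3}).

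To compute the entries I would exploit that $w^+$ and $w^-$ have disjoint supports, so that $\int_\Omega F(x,tw^++\theta w^-)\,dx$ and $\int_\Omega|tw^++\theta w^-|^q|x|^{-\alpha}\,dx$ split into a purely $t$-part and a purely $\theta$-part; the only coupling between $t$ and $\theta$ comes from the Kirchhoff term $\frac1p\widehat M(\|tw^++\theta w^-\|^p)$, through both $M'$ and the nonlocal cross-interaction of $(-\Delta_p)^s$. Setting $P^\pm=\langle(-\Delta_p)^sw,w^\pm\rangle>0$, $K^\pm=\int_\Omega|w^\pm|^q|x|^{-\alpha}dx>0$, $\Phi^\pm=\int_\Omega f(x,w^\pm)w^\pm dx>0$ (positive by \hyperlink{0}{$(f_0)$}), $m=M(\|w\|^p)>0$, $m'=M'(\|w\|^p)\ge0$, and
\[
S^\pm=(p-1)\int_{\mathbb R^{2N}}\frac{|w(x)-w(y)|^{p-2}\big(w^\pm(x)-w^\pm(y)\big)^2}{|x-y|^{N+sp}}\,dx\,dy,
\]
\[
R=(p-1)\int_{\mathbb R^{2N}}\frac{|w(x)-w(y)|^{p-2}\big(w^+(x)-w^+(y)\big)\big(w^-(x)-w^-(y)\big)}{|x-y|^{N+sp}}\,dx\,dy\ge0,
\]
differentiation gives $\partial_{t\theta}\psi_w(1,1)=pm'P^+P^-+mR=:C\ge0$, while the homogeneity of $t\mapsto\|tw^++\theta w^-\|^p$ yields the identity $S^\pm+R=(p-1)P^\pm$, and the relation $w\in\mathcal M_\lambda$ gives $E^\pm:=mP^\pm=\lambda\Phi^\pm+K^\pm$. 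Using these two facts one rewrites the diagonal entry as
\[
\partial_{tt}\psi_w(1,1)=pm'(P^+)^2-mR+\lambda\big(p\Phi^+-T^+\big)+(p-q)K^+,\qquad T^+:=\int_\Omega\partial_t\overline f(x,w^+)w^+\,dx,
\]
and symmetrically for $\partial_{\theta\theta}$.

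With these formulas the proof reduces to two estimates. Set $\Lambda:=(\gamma-p)\frac{E^+E^-}{E^++E^-}+mR>0$. First, \eqref{R5}, i.e. $pm'\|w\|^p<(\gamma-p)m$, together with $mP^\pm=E^\pm$ gives $pm'P^+P^-<(\gamma-p)\frac{E^+E^-}{E^++E^-}$, hence $C<\Lambda$. Second, \eqref{propf1} (the consequence of \hyperlink{3}{$(f_3)$}) gives $T^+\ge\gamma\Phi^+$, so $p\Phi^+-T^+\le(p-\gamma)\Phi^+$; combining this with $pm'(P^+)^2<(\gamma-p)(E^+)^2/(E^++E^-)$ and the algebra using $E^+=\lambda\Phi^++K^+$ and $\gamma<q$ yields $-\partial_{tt}\psi_w(1,1)>\Lambda+(q-\gamma)K^+$ and $-\partial_{\theta\theta}\psi_w(1,1)>\Lambda+(q-\gamma)K^-$ (in particular both diagonal entries are negative). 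Since $0\le C<\Lambda$, these bounds give
\[
\det J_{(1,1)}\Psi_w>\big(\Lambda+(q-\gamma)K^+\big)\big(\Lambda+(q-\gamma)K^-\big)-\Lambda^2=(q-\gamma)\Lambda(K^++K^-)+(q-\gamma)^2K^+K^->0.
\]

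The main obstacle is precisely the coupling created by the two off-diagonal contributions: the Kirchhoff derivative $m'$ and the nonlocal cross term $R$, which is strictly positive when $w$ changes sign (the same phenomenon behind the strict inequality in \eqref{Equa62}, absent in the local case $s=1$). The key is that \hyperlink{M2}{$(M_2)$}, through \eqref{R5}, forces the coupling $pm'P^+P^-$ to be dominated by exactly the quantity $(\gamma-p)E^+E^-/(E^++E^-)$ that also appears on the diagonal, so that the surplus $(q-\gamma)K^\pm>0$ left on the diagonal—furnished by the gap $\gamma<q$ between the exponent in \hyperlink{M2}{$(M_2)$} and the Hardy exponent $q$, via \hyperlink{3}{$(f_3)$}—produces strict diagonal dominance and hence $\det J_{(1,1)}\Psi_w>0$.
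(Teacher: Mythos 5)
Your proposal is correct and takes essentially the same route as the paper's own proof: you compute the entries of $J_{(1,1)}\Psi_{w}$ (equivalently, by criticality of $(1,1)$, the Hessian of $\psi_{w}$), then use \eqref{R5}, the Nehari identities $\langle I'_{\lambda}(w),w^{\pm}\rangle=0$, the inequality \eqref{propf1} and $\gamma<q$ to show that each (negative) diagonal entry strictly dominates the nonnegative off-diagonal coupling $pM'(\|w\|^{p})P^{+}P^{-}+M(\|w\|^{p})R$, and conclude $\det J_{(1,1)}\Psi_{w}>0$ by strict diagonal dominance. The only differences are notational (your $R$ and $C$ are the paper's $C(w)$ and $\frac{\partial \Psi_{w}^{1}}{\partial \theta}(1,1)$, and your intermediate bound $\Lambda$ with the surplus $(q-\gamma)K^{\pm}$ just makes explicit what the paper discards as a nonpositive remainder), so the two arguments coincide in substance.
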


\begin{proof}
Writing 
\begin{equation*}
\Psi_{w}(t,\theta):=(\Psi_{w}^{1}(t,\theta),\Psi_{w}^{2}(t,\theta)),
\end{equation*}
with $\Psi_{w}^{1}(t,\theta)= \langle I_{\lambda}(tw^{+}+\theta w^{-}),tw^{+}\rangle$ and $ \Psi_{w}^{2}(t,\theta)= \langle I_{\lambda}(tw^{+}+\theta w^{-}),\theta w^{-}\rangle  $, across some calculus, it is shows that 
\begin{eqnarray*}
\frac{\partial \Psi_{w}^{1}}{\partial t} (1,1)&=&p M'(\|w\|^{p})(A^{+}(w)+B^{+}(w))^{2}+p M(\|w\|^{p})(A^{+}(w)+B^{+}(w))-M(\|w\|^{p})C(w)\\
&&-\lambda \int _{\Omega}\partial_{t}\overline{f}(x,w^{+})w^{+}dx- q\int _{\Omega}\frac{|w^{+}|^{q}}{|x|^{\alpha}}dx,
\end{eqnarray*} 
\begin{eqnarray*}
\frac{\partial \Psi_{w}^{2}}{\partial \theta} (1,1)&=&pM'(\|w\|^{p})(A^{-}(w)+B^{-}(w))^{2}+p M(\|w\|^{p})(A^{-}(w)+B^{-}(w))-M(\|w\|^{p})C(w)\\
&&-\lambda \int _{\Omega}\partial_{t}\overline{f}(x,w^{-})w^{-}dx- q\int _{\Omega}\frac{|w^{-}|^{q}}{|x|^{\alpha}}dx,
\end{eqnarray*}
\begin{eqnarray*}
\frac{\partial \Psi_{w}^{1}}{\partial \theta} (1,1)&=&\frac{\partial \Psi_{w}^{2}}{\partial t} (1,1)= p M'(\|w\|^{p})(A^{+}(w)+B^{+}(w))(A^{-}(w)+B^{-}(w))+M(\|w\|^{p})C(w)>0,
\end{eqnarray*}
where
\begin{equation*}
C(w)=2(p-1)\int _{\Omega_{+}\times \Omega_{-}}\frac{|w^{+}(x)-w^{-}(y)|^{p-2}w^{+}(x)(-w^{-}(y))}{|x-y|^{N+sp}}dxdy.
\end{equation*}

From \eqref{R5}, it follows that

\begin{eqnarray*}
\frac{\partial \Psi_{w}^{1}}{\partial t} (1,1)&<&\gamma M(\|w\|^{p})(A^{+}(w)+B^{+}(w))\\
&&-pM'(\|w\|^{p})(A^{+}(w)+B^{+}(w))(A^{-}(w)+B^{-}(w))-M(\|w\|^{p})C(w)\\
&&-\lambda \int _{\Omega}\partial_{t}\overline{f}(x,w^{+})w^{+}dx-q\int _{\Omega}\frac{|w^{+}|^{q}}{|x|^{\alpha}}dx.
\end{eqnarray*}

Since $ \langle I'_{\lambda}(w),w^{+}\rangle =0 $, using \eqref{propf1}, we obtain 

\begin{eqnarray*}
\frac{\partial \Psi_{w}^{1}}{\partial t} (1,1)&<&-pM'(\|w\|^{p})(A^{+}(w)+B^{+}(w))(A^{-}(w)+B^{-}(w))-M(\|w\|^{p})C(w) \\
&& +\int _{\Omega}\gamma f(x,w^{+})w^{+}-\partial_{t}\overline{f}(x,w^{+})w^{+}dx+(\gamma-q)\int _{\Omega}\frac{|w^{+}|^{q}}{|x|^{\alpha}}dx\\
&\leq&-pM'(\|w\|^{p})(A^{+}(w)+B^{+}(w))(A^{-}(w)+B^{-}(w))-M(\|w\|^{p})C(w)\\
&=&-\frac{\partial \Psi_{w}^{1}}{\partial \theta} (1,1).
\end{eqnarray*}
Analogously, we can conclude that
\begin{eqnarray*}
\frac{\partial \Psi_{w}^{2}}{\partial \theta} (1,1)<-\frac{\partial \Psi_{w}^{1}}{\partial \theta} (1,1).
\end{eqnarray*}
Therefore,
\begin{eqnarray*}
\det J_{(1,1)}\Psi_{w}&=&\frac{\partial \Psi_{w}^{1}}{\partial t} (1,1) \frac{\partial \Psi_{w}^{2}}{\partial \theta} (1,1)-\frac{\partial \Psi_{w}^{1}}{\partial \theta} (1,1) \frac{\partial \Psi_{w}^{2}}{\partial t} (1,1)\\
&>&\left (\frac{\partial \Psi_{w}^{1}}{\partial \theta} (1,1)\right )^{2}-\left (\frac{\partial \Psi_{w}^{1}}{\partial \theta} (1,1)\right )^{2}=0.
\end{eqnarray*}
\end{proof}

\begin{remark}\label{Rem1}
By Lemma \ref{Lem4}, it follows that $ c_{\mathcal{N}_{\lambda}}>0 $, for all $ \lambda >0 $. Also, given $ w\in \mathcal{M}_{\lambda} $, there exist $ t_{w^{+}}, \theta _{w^{-}}>0 $ such that $ t_{w^{+}}w^{+}, \theta _{w^{-}}w^{-}\in \mathcal{N}_{\lambda} $. So,
\begin{eqnarray*}
I_{\lambda}(w)\geq I_{\lambda}(t_{w^{+}}w^{+}+\theta _{w^{-}}w^{-})>I_{\lambda}(t_{w^{+}}w^{+})+I_{\lambda}(\theta_{w^{-}}w^{-}).
\end{eqnarray*}
Then,
\begin{equation}\label{Equa41}
c_{\mathcal{M}_{\lambda}}\geq c_{\mathcal{N}_{\lambda}^{+}}+c_{\mathcal{N}_{\lambda}^{-}}\geq 2 c_{\mathcal{N}_{\lambda}}.
\end{equation}
Furthermore, if $ c_{\mathcal{M}_{\lambda}} $ is achieved, we have $ c_{\mathcal{M}_{\lambda}}>c_{\mathcal{N}_{\lambda}^{+}}+c_{\mathcal{N}_{\lambda}^{-}}\geq 2c_{\mathcal{N}_{\lambda}} $.
\end{remark}

\begin{proposition}\label{Prop1}
The following asymptotic properties hold: 
\begin{itemize}
\item[(1)] For $ X_{\lambda}=\mathcal{N}_{\lambda},\mathcal{N}_{\lambda}^{+},\mathcal{N}_{\lambda}^{-},\mathcal{M}_{\lambda} $, it holds that $ c_{X_{\lambda}} $ is non-increasing in $ \lambda>0 $ and
\begin{equation*}
\lim_{\lambda\rightarrow \infty} c_{X_{\lambda}} =0.
\end{equation*}
\item[(2)] Let $ \{q_{n}\}\subset (p,p_{\alpha}^{*}] $ be such that $ q_{n}\rightarrow p_{\alpha}^{*} $ as $ n\rightarrow \infty $. Then,
\begin{equation*}
\lim_{\lambda\rightarrow \infty} \limsup_{n\rightarrow \infty}c_{\mathcal{M}_{\lambda,q_{n}}}=0. 
\end{equation*}
\end{itemize}
\end{proposition}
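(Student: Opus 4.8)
The plan is to rewrite each infimum as an inf--max over the fibering maps and read both assertions off this representation. By Lemma \ref{Lem1} every ray $\{tu:t\geq 0\}$, $u\neq 0$, meets $\mathcal N_\lambda$ at the single point maximizing $\varphi_u(t)=I_\lambda(tu)$, and by Lemma \ref{lem3} every splitting $(w^+,w^-)$ with $w^\pm\neq 0$ yields a unique maximizer of $\psi_w(t,\theta)=I_\lambda(tw^++\theta w^-)$ lying in $\mathcal M_\lambda$; hence
\begin{equation*}
c_{\mathcal N_\lambda^{\pm}}=\inf_{\pm u\geq 0,\,u\neq 0}\max_{t\geq 0}I_\lambda(tu),\qquad c_{\mathcal M_\lambda}=\inf_{w^{\pm}\neq 0}\max_{t,\theta\geq 0}I_\lambda(tw^++\theta w^-),
\end{equation*}
while $c_{\mathcal N_\lambda}\leq c_{\mathcal N_\lambda^{+}}$ because $\mathcal N_\lambda^{+}\subset\mathcal N_\lambda$. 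Since $F(x,\cdot)>0$ by \eqref{propf3}, for $\lambda_1\leq\lambda_2$ we have $I_{\lambda_2}(v)\leq I_{\lambda_1}(v)$ pointwise, so every max above decreases in $\lambda$ and each infimum is non-increasing; this is the monotonicity. Combined with $c_{X_\lambda}>0$ (Lemma \ref{Lem4}(1) and Remark \ref{Rem1}), the limits in (1) exist and are $\geq 0$, so it remains only to bound them above by quantities tending to $0$.

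For the upper bound I would fix one test function and let the fiber scaling collapse. Take $v\geq 0$, $v\neq 0$, and let $t_\lambda$ maximize $\varphi_v$, so $t_\lambda v\in\mathcal N_\lambda$. First $t_\lambda$ is bounded uniformly in $\lambda$: the Nehari identity $M(t^p\|v\|^p)t^p\|v\|^p=\lambda\int_\Omega f(x,tv)tv\,dx+t^q\int_\Omega|v|^q|x|^{-\alpha}dx$ at $t=t_\lambda$ has left side $\leq M(1)(t_\lambda^p\|v\|^p+t_\lambda^\gamma\|v\|^\gamma)$ by \eqref{R1}, while its right side dominates $t_\lambda^q\int_\Omega|v|^q|x|^{-\alpha}dx$; as $q>\gamma$ this forces $t_\lambda\leq T$ with $T$ independent of $\lambda$. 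Next $t_\lambda\to 0$: by \hyperlink{0}{$(f_0)$} and \hyperlink{3}{$(f_3)$} the map $s\mapsto f(x,s)s$ is positive and increasing on $s>0$, so if $t_\lambda\geq\delta$ along a subsequence then $\int_\Omega f(x,t_\lambda v)t_\lambda v\,dx\geq\int_\Omega f(x,\delta v)\delta v\,dx=:c_0>0$, and the identity gives $\lambda c_0\leq M(1)(T^p\|v\|^p+T^\gamma\|v\|^\gamma)$, impossible for large $\lambda$. Discarding the nonnegative terms $-\lambda\int_\Omega F$ and the Hardy term,
\begin{equation*}
0<c_{\mathcal N_\lambda^{+}}\leq I_\lambda(t_\lambda v)\leq \tfrac{1}{p}\widehat M(t_\lambda^p\|v\|^p)\longrightarrow \tfrac1p\widehat M(0)=0\quad(\lambda\to\infty),
\end{equation*}
using continuity of $\widehat M$ and $\widehat M(0)=0$. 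The case $\mathcal N_\lambda^{-}$ is identical with $v\leq 0$, and $\mathcal M_\lambda$ is the same in two variables: for fixed $w$ the maximizers $(t_\lambda,\theta_\lambda)$ stay in a ball $\{|(t,\theta)|\leq R_0\}$ (for $\lambda\geq\lambda_0$, since $I_\lambda\leq I_{\lambda_0}\to-\infty$ at infinity by Lemma \ref{Lem1.1}(1) while the max is positive), and each stationarity equation $\langle I_\lambda'(z),t_\lambda w^+\rangle=0$, $\langle I_\lambda'(z),\theta_\lambda w^-\rangle=0$ with $z=t_\lambda w^++\theta_\lambda w^-$ has bounded left side but right side $\geq\lambda\int_\Omega f(x,t_\lambda w^+)t_\lambda w^+\,dx$, forcing $t_\lambda,\theta_\lambda\to0$ and hence $c_{\mathcal M_\lambda}\leq\tfrac1p\widehat M(\|z\|^p)\to0$.

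For (2) I would reuse the same fixed $w$ with exponent $q_n$; the only new ingredient is uniformity in $n$. Let $(t_n,\theta_n)$ maximize $\psi_w$ for $I_{\lambda,q_n}$ and $z_n=t_nw^++\theta_nw^-\in\mathcal M_{\lambda,q_n}$. Assuming say $\theta_n\leq t_n$, inequalities \eqref{Equa30}--\eqref{Equa31} applied to the $t$-equation give $t_n^{q_n}\int_\Omega|w^+|^{q_n}|x|^{-\alpha}dx\leq M(1)\langle(-\Delta_p)^sw,w^+\rangle(t_n^p+t_n^\gamma\|w\|^{\gamma-p})$; since $\int_\Omega|w^+|^{q_n}|x|^{-\alpha}dx\to\int_\Omega|w^+|^{p_\alpha^*}|x|^{-\alpha}dx>0$ and $q_n-\gamma\to p_\alpha^*-\gamma>0$, this bounds $t_n$ (and symmetrically $\theta_n$) by a constant $T_0$ independent of $n$ (large) and of $\lambda$. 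With $\|z_n\|\leq C_w\max\{t_n,\theta_n\}$ and $c_{\mathcal M_{\lambda,q_n}}\leq\tfrac1p\widehat M(\|z_n\|^p)$, it then suffices to prove $\lim_{\lambda\to\infty}\limsup_n\max\{t_n,\theta_n\}=0$. If this failed there would be $\lambda_k\to\infty$, $n_k\to\infty$ with (say) $t_{n_k}\geq\delta>0$; the stationarity equation in $t$ has left side bounded via $T_0$ while its right side is $\geq\lambda_k\int_\Omega f(x,t_{n_k}w^+)t_{n_k}w^+\,dx\geq\lambda_k\int_\Omega f(x,\delta w^+)\delta w^+\,dx\to\infty$, a contradiction. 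Continuity and monotonicity of $\widehat M$ then yield $\lim_{\lambda\to\infty}\limsup_n c_{\mathcal M_{\lambda,q_n}}=0$.

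The main obstacle is exactly this uniformity in $n$ in part (2): one must check that neither the upper bound $T_0$ on the fiber scalings nor the positive lower bound $c_0$ supplied by \hyperlink{0}{$(f_0)$}--\hyperlink{3}{$(f_3)$} degenerates as $q_n\uparrow p_\alpha^*$, which is what licenses taking the limits in $n$ and in $\lambda$ in the stated order. Everything else reduces to the single-exponent estimates of part (1), carried out with constants independent of $\lambda$.
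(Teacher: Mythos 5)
Your proposal is correct; the substantive difference with the paper is concentrated in part (2). For part (1) you argue essentially as the paper does: monotonicity comes from the pointwise monotonicity of $I_{\lambda}$ in $\lambda$ combined with the inf--max characterization of the Nehari levels, and the decay comes from showing that the fiber scalings of a fixed test function tend to zero. (The paper obtains the latter by passing to a subsequential limit $(\overline{t},\overline{\theta})$ and using Lemma \ref{Lemm1} to contradict the Nehari identity \eqref{Equa8}; you obtain it from the monotonicity of $s\mapsto f(x,s)s$ on $s>0$, which follows from $(f_{0})$ and $(f_{3})$ and yields the uniform lower bound $c_{0}$ without any compactness argument. Both work.) In part (2) the routes genuinely diverge. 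The paper fixes $\lambda$ and proves that the subcritical maximizers $(t_{\lambda,q_{n}},\theta_{\lambda,q_{n}})$ converge, as $n\to\infty$, to the critical maximizer $(t_{\lambda,p_{\alpha}^{*}},\theta_{\lambda,p_{\alpha}^{*}})$, using the locally uniform convergence $\psi_{w}^{q_{n}}\to\psi_{w}^{p_{\alpha}^{*}}$ and the uniqueness assertion of Lemma \ref{lem3}; this gives $\limsup_{n} c_{\mathcal{M}_{\lambda,q_{n}}}\leq \psi_{w}^{p_{\alpha}^{*}}(t_{\lambda,p_{\alpha}^{*}},\theta_{\lambda,p_{\alpha}^{*}})$, and then part (1) applied at $q=p_{\alpha}^{*}$ finishes. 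You never identify the limit of the maximizers; instead you prove scaling bounds uniform in both $n$ and $\lambda$ from \eqref{Equa30}--\eqref{Equa31} (discarding the nonnegative $\lambda$-term is precisely what makes the bound $\lambda$-free), and then run a diagonal contradiction argument with $\lambda_{k}\to\infty$, $n_{k}\to\infty$ on the stationarity equation. What your route buys: no uniqueness of the critical maximizer, no locally uniform convergence, no Dominated Convergence step, and the crux --- uniformity in $n$ of the upper bound $T_{0}$ and of the lower bound $c_{0}$ --- is correctly isolated and correctly resolved, since $\int_{\Omega}|w^{+}|^{q_{n}}|x|^{-\alpha}dx$ converges to a positive limit and $q_{n}-\gamma$ stays bounded away from zero. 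What the paper's route buys: part (2) reduces formally to part (1), with the limit in $n$ handled by soft arguments rather than by tracking constants in $(n,\lambda)$.
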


\begin{proof}
\noindent \textbf{(1)} If $ \lambda_{2}>\lambda_{1} $, $ w\in \mathcal{M}_{\lambda_{1}} $ and $ tw^{+}+\theta w^{-}\in \mathcal{M}_{\lambda_{2}} $, by Lemma \ref{lem3}, we have
\begin{eqnarray*}
I_{\lambda_{1}}(w)&=&\psi_{w}^{\lambda_{1}}(1,1)\geq \psi_{w}^{\lambda_{1}}(t,\theta)=\psi_{w}^{\lambda_{2}}(t,\theta)+(\lambda_{2}-\lambda_{1})\int _{\Omega}F(tw^{+}+\theta w^{-})dx \\
&>& \psi_{w}^{\lambda_{2}}(t,\theta)\geq c_{\mathcal{M}_{\lambda_{2}}},
\end{eqnarray*} 
which implies $ c_{\mathcal{M}_{\lambda_{1}}}\geq c_{\mathcal{M}_{\lambda_{2}}} $. Analogously, using $ \varphi_{u} $ instead $ \psi_{w} $, we obtain the same conclusion to other $ c_{X_{\lambda}} $.

Let $ w\in W_{0}^{s,p}(\Omega) $ be with $ w^{+}\neq 0 $ and $ w^{-}\neq 0 $. By Lemma \ref{lem3}, for each $ \lambda >0 $, there exist $ t_{\lambda} >0 $ and $ \theta_{\lambda}> 0$ such that 
\begin{equation*}
t_{\lambda}w^{+}+\theta _{\lambda}w^{-} \in \mathcal{M}_{\lambda}.
\end{equation*}

Now, from \eqref{propf3} and \eqref{R1}, we get
\begin{eqnarray*}
0<c_{X_{\lambda}}\leq c_{\mathcal{M}_{\lambda}}=\inf_{v\in \mathcal{M}_{\lambda}}I_{\lambda}(v)&\leq& I_{\lambda}(t_{\lambda}w^{+}+\theta _{\lambda}w^{-})\\
&\leq& \frac{M(1)}{p} \|t_{\lambda}w^{+}+\theta _{\lambda}w^{-}\|^{p}+\frac{M(1)}{\gamma}\|t_{\lambda}w^{+}+\theta _{\lambda}w^{-}\|^{\gamma},
\end{eqnarray*}
and, thus, it is enough to show that $ t_{\lambda}\rightarrow 0 $ and $ \theta _{\lambda}\rightarrow 0 $, as $ \lambda \rightarrow \infty $.

We consider the set
\begin{equation*}
Q_{w}=\{(t_{\lambda},\theta _{\lambda})\in [0,\infty)\times [0,\infty): \Psi_{w}(t_{\lambda},\theta _{\lambda})=(0,0), \lambda >0\},
\end{equation*}
where $ \Psi_{w} $ was defined in \eqref{Equa35}. Since $ t_{\lambda}w^{+}+\theta _{\lambda}w^{-}\in \mathcal{N}_{\lambda} $, by assumption \hyperlink{0}{($ f_{0} $)} and \eqref{R1}, we have
\begin{eqnarray*}
t_{\lambda}^{q}\int _{\Omega}\frac{|w^{+}|^{q}}{|x|^{\alpha}}dx +\theta _{\lambda}^{q}\int _{\Omega} \frac{|w^{-}|^{q}}{|x|^{\alpha}}dx
&\leq& M(\|t_{\lambda}w^{+}+\theta _{\lambda}w^{-}\|^{p})\|t_{\lambda}w^{+}+\theta _{\lambda}w^{-}\|^{p}\\
&\leq&2^{p-1} M(1) t_{\lambda}^{p}\|w^{+}\|^{p}+2^{p-1} M(1)\theta _{\lambda}^{p}\|w^{-}\|^{p}\\
&&+2^{\gamma-1}M(1)t_{\lambda}^{\gamma}\|w^{+}\|^{\gamma}+2^{\gamma-1}M(1)\theta_{\lambda} ^{\gamma}\|w^{-}\|^{\gamma}.
\end{eqnarray*}

Once $ q >\gamma>p $, it follows that $ Q_{w} $ is bounded. Therefore, if $ \{\lambda _{n}\} \subset (0,\infty)$ is such that $ \lambda _{n}\rightarrow \infty $, as $ n\rightarrow \infty $, up to a subsequence, still denoted by $ \{(t_{\lambda_{n}},\theta_{\lambda_{n}})\} $, there exist $ \overline{t}, \overline{\theta}\geq 0 $ such that $ t_{\lambda _{n}}\rightarrow \overline{t} $ and $ \theta _{\lambda _{n}}\rightarrow \overline{\theta} $.

We will show that $ \overline{t}=\overline{\theta}=0 $. Suppose, by contradiction, that $ \overline{t} >0$ or $ \overline{\theta}> 0$. For each  $ n\in \mathbb{N} $, $ t_{\lambda_{n}}w^{+}+\theta _{\lambda _{n}}w^{-}\in \mathcal{N}_{\lambda_{n}} $, namely
\begin{eqnarray}\label{Equa8}
M(\|t_{\lambda _{n}}w^{+}+\theta _{\lambda _{n}}w^{-}\|^{p})\|t_{\lambda _{n}}w^{+}+\theta _{\lambda _{n}}w^{-}\|^{p}&=&\lambda _{n}\int _{\Omega}f(x,t_{\lambda _{n}}w^{+}+\theta _{\lambda _{n}}w^{-})(t_{\lambda _{n}}w^{+}+\theta _{\lambda _{n}}w^{-})dx\nonumber\\
&& +\int _{\Omega}\frac{|t_{\lambda _{n}}w^{+}+\theta _{\lambda _{n}}w^{-}|^{q}}{|x|^{\alpha}} dx.
\end{eqnarray}
Provided that $ t_{\lambda _{n}}w^{+}\rightarrow \overline{t}w^{+} $ and $ \theta _{\lambda _{n}}w^{-}\rightarrow \overline{\theta}w^{-} $ in $ W_{0}^{s,p}(\Omega) $, by Lemma \ref{Lemm1} and \hyperlink{0}{$ (f_{0}) $}, we have
\begin{eqnarray*}
\int _{\Omega}f(x,t_{\lambda _{n}}w^{+}+\theta _{\lambda _{n}}w^{-})(t_{\lambda _{n}}w^{+}+\theta _{\lambda _{n}}w^{-})dx\rightarrow \int _{\Omega}f(x,\overline{t}w^{+}+\overline{\theta}w^{-})(\overline{t}w^{+}+\overline{\theta}w^{-})dx>0,
\end{eqnarray*}
as $ n\rightarrow \infty $. Once $ \lambda_{n}\rightarrow \infty $, as $ n\rightarrow \infty $ and $\{ t_{\lambda _{n}}w^{+}+\theta _{\lambda _{n}}w^{-} \}$ is bounded in $ W_{0}^{s,p}(\Omega) $, we have a contradiction with equality \eqref{Equa8}. Thus, $ \overline{t}=\overline{\theta}=0 $. Therefore, $ c_{X_{\lambda_{n}}} \rightarrow 0 $, as $ n\rightarrow \infty $.

\noindent \textbf{(2)} Given $ w\in W_{0}^{s,p}(\Omega)  $ with $ w^{+}\neq 0 $ and $ w^{-}\neq 0 $, from Lemma \ref{lem3} we can take $ (t_{\lambda,q_{n}},\theta_{\lambda,q_{n}})\in (0,\infty)\times (0,\infty) $ such that $ t_{\lambda,q_{n}}w^{+}+\theta _{\lambda,q_{n}}w^{-}\in \mathcal{M}_{\lambda,q_{n}} $ and $ (t_{\lambda,p_{\alpha}^{*}},\theta_{\lambda,p_{\alpha}^{*}})\in (0,\infty)\times (0,\infty) $ such that $ t_{\lambda,p_{\alpha}^{*}}w^{+}+\theta _{\lambda,p_{\alpha}^{*}}w^{-}\in \mathcal{M}_{\lambda,p_{\alpha}^{*}} $.

\noindent\emph{Claim:} $ (t_{\lambda,q_{n}},\theta _{\lambda,q_{n}})\rightarrow (t_{\lambda,p_{\alpha}^{*}},\theta _{\lambda,p_{\alpha}^{*}}) $ as $ n\rightarrow \infty $.

Put $ \psi_{w}^{q_{n}}(t,\theta):=I_{\lambda,q_{n}}(tw^{+}+\theta w^{-}) $ and note that, from \eqref{R1} and \eqref{Equa2.1}, we have
\begin{equation*}
\psi_{w}^{q_{n}}(t,\theta)\leq \frac{1}{p}\widehat{M}(\|tw^{+}+\theta w^{-}\|^{p})-\lambda \int _{\Omega}F(x,tw^{+}+\theta w^{-})\rightarrow -\infty
\end{equation*}
as $ |(t,\theta)|\rightarrow \infty $, independently of $ n $. Therefore, there exist $ \tilde{t},\tilde{\theta}>0 $ such that $ (t_{\lambda,q_{n}},\theta _{\lambda,q_{n}})\in (0,\tilde{t}]\times (0,\tilde{\theta}] $, for all $ n $. Then, up to subsequence, $ (t_{\lambda,q_{n}},\theta _{\lambda,q_{n}})\rightarrow (\overline{t},\overline{\theta})\in [0,\infty)\times[0,\infty) $. Since $ \psi_{w}^{q_{n}}\rightarrow \psi_{w}^{p_{\alpha}^{*}} $ locally uniformly as $ n\rightarrow \infty $, it follows that
\begin{equation*}
\psi_{w}^{q_{n}}(t_{\lambda,q_{n}},\theta _{\lambda,q_{n}})\rightarrow \psi_{w}^{p_{\alpha}^{*}}(\overline{t},\overline{\theta})
\end{equation*}
and
\begin{equation*}
\psi_{w}^{q_{n}}(t_{\lambda,q_{n}},\theta _{\lambda,q_{n}})=\sup _{(t,\theta)}\psi_{w}^{q_{n}}(t,\theta)\rightarrow \sup_{(t,\theta)}\psi_{w}^{p_{\alpha}^{*}}(t,\theta)
\end{equation*}
as $ n\rightarrow \infty $. Thus, $ \psi_{w}^{p_{\alpha}^{*}}(\overline{t},\overline{\theta})=\sup _{(t,\theta)}\psi_{w}^{p_{\alpha}^{*}}(t,\theta)=\psi_{w}^{p_{\alpha}^{*}}(t_{\lambda,p_{\alpha}^{*}},\theta _{\lambda,p_{\alpha}^{*}}) $. From the uniqueness, ensured by Lemma \ref{lem3}, we obtain $ t_{\lambda,p_{\alpha}^{*}}=\overline{t} $ and $ \theta _{\lambda,p_{\alpha}^{*}}=\overline{\theta} $. Moreover, from Dominated Convergence Theorem, we obtain
\begin{eqnarray*}
\limsup_{n\rightarrow \infty} c_{\mathcal{M}_{\lambda,q_{n}}}&\leq &\lim_{n\rightarrow \infty}\psi_{w}^{q_{n}}(t_{\lambda,q_{n}},\theta _{\lambda,q_{n}})\\
&=& \psi_{w}^{p_{\alpha}^{*}}(t_{\lambda,p_{\alpha}^{*}},\theta _{\lambda,p_{\alpha}^{*}}).
\end{eqnarray*}
Since $ t_{\lambda,p_{\alpha}^{*}},\theta _{\lambda,p_{\alpha}^{*}}\rightarrow 0 $ as $ \lambda \rightarrow \infty $, it follows that
\begin{equation*}
\lim_{\lambda \rightarrow \infty}\limsup_{n\rightarrow \infty}c_{\mathcal{M}_{\lambda,q_{n}}}=0.
\end{equation*}
\end{proof}

If $ \{u_{n}\} $ is a sequence such that $ u_{n} \rightharpoonup u $ in $ W_{0}^{s,p}(\Omega) $, from a standard argument, one can check that $ u_{n}^{\pm} \rightharpoonup u^{\pm} $. Thus, we can apply Lemma \ref{lem4} to $ \{u_{{n}}\} $ and to both $ u_{{n}}^{\pm} $, so that, for $ u $, we obtain measures $ \nu, \sigma $, $ \{x_{j}\}_{j\in \mathcal{J}}\subset \overline{\Omega} $ satisfying \eqref{Eq1}-\eqref{Eq4} and for $ u^{\pm} $, correspondents $ \nu^{\pm} $, $ \sigma^{\pm} $, $ \{x_{j}^{\pm}\}_{j\in \mathcal{J}^{\pm}} $.

\begin{lemma}\label{lem6}
Let $ \{u_{n}\} $ and $ \{q_{n}\} $ be a sequences satisfying the hypotheses of Lemma \ref{lem4} such that $ I'_{\lambda,q_{n}}(u_{n})\rightarrow 0 $, as $ n\rightarrow \infty $. With the previous notations, if $ \nu_{j}>0 $ for some $ j\in \mathcal{J} $ or $  \nu _{k}^{\pm}>0 $ for some $ k\in \mathcal{J}^{\pm} $, we have
\begin{equation*}
\nu_{j}, \nu _{k}^{\pm}\geq \min\left \{\left (M(1)S_{\alpha}\right )^{\frac{N-\alpha}{sp-\alpha}},\left (M(1)S_{\alpha}^{\frac{\gamma}{p}}\right )^{\frac{p_{\alpha}^{*}}{p_{\alpha}^{*}-\gamma}}\right \}.
\end{equation*}
\end{lemma}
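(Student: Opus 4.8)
The plan is to carry out a localized concentration--compactness analysis at a fixed concentration point, exploiting the coefficient $M$ only through its monotonicity. Suppose $\nu_j>0$ for some $j\in\mathcal{J}$; the case $\nu_k^{\pm}>0$ is handled identically by testing against $u_n^{\pm}$ in place of $u_n$ and invoking the pointwise inequality for $\phi_p$ stated just before \eqref{Equa62} together with \eqref{Equa62}. Fix $x_j$ and, for each $\varepsilon>0$, take $\phi_\varepsilon\in C_c^\infty(\mathbb{R}^N)$ with $0\le\phi_\varepsilon\le1$, $\phi_\varepsilon\equiv1$ on $B(x_j,\varepsilon)$, $\operatorname{supp}\phi_\varepsilon\subset B(x_j,2\varepsilon)$ and $|\nabla\phi_\varepsilon|\le C/\varepsilon$. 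Since $\{u_n\}$ is bounded, so is $\{u_n\phi_\varepsilon\}$, and from $I_{\lambda,q_n}'(u_n)\to0$ I get $\langle I_{\lambda,q_n}'(u_n),u_n\phi_\varepsilon\rangle=o_n(1)$, that is,
\begin{equation*}
M(\|u_n\|^p)\langle(-\Delta_p)^su_n,u_n\phi_\varepsilon\rangle=\lambda\int_\Omega f(x,u_n)u_n\phi_\varepsilon\,dx+\int_\Omega\frac{|u_n|^{q_n}}{|x|^\alpha}\phi_\varepsilon\,dx+o_n(1).
\end{equation*}

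First I would decompose the bilinear form by writing $u_n\phi_\varepsilon$ as $\phi_\varepsilon$ times the difference quotient plus a remainder carrying $\phi_\varepsilon(x)-\phi_\varepsilon(y)$; this produces $\int_{\mathbb{R}^N}|D^su_n|^p\phi_\varepsilon\,dx$ plus a commutator $R_{n,\varepsilon}$ which, by Hölder's inequality and $|\nabla\phi_\varepsilon|\le C/\varepsilon$, satisfies $\lim_{\varepsilon\to0}\limsup_n|R_{n,\varepsilon}|=0$. The key point is that, since $M$ is increasing by \hyperlink{M1}{$(M_1)$} and $\|u_n\|^p\ge\int_{\mathbb{R}^N}|D^su_n|^p\phi_\varepsilon\,dx=:a_n$, we have $M(\|u_n\|^p)a_n\ge M(a_n)a_n$, and the continuity of $t\mapsto M(t)t$ together with $a_n\to a_\varepsilon:=\int\phi_\varepsilon\,d\sigma$ (from $|D^su_n|^p\rightharpoonup^*\sigma$) yields $\liminf_nM(\|u_n\|^p)a_n\ge M(a_\varepsilon)a_\varepsilon$. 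Meanwhile the nonlinear term converges to $\lambda\int_\Omega f(x,u)u\phi_\varepsilon\,dx$ by Lemma \ref{Lemm1}, the critical term converges to $\int\phi_\varepsilon\,d\nu$, and $M(\|u_n\|^p)|R_{n,\varepsilon}|$ stays bounded. Letting $n\to\infty$ and then $\varepsilon\to0$, and using $a_\varepsilon\to\sigma_j$, $\int\phi_\varepsilon\,d\nu\to\nu_j$ (from \eqref{Eq2}--\eqref{Eq3}) and $\int_\Omega f(x,u)u\phi_\varepsilon\,dx\to0$ (as $f(x,u)u\in L^1(\Omega)$ and the supports shrink to $\{x_j\}$), I obtain $M(\sigma_j)\sigma_j\le\nu_j$, whence by \eqref{R3}
\begin{equation*}
\nu_j\ge M(1)\min\{1,\sigma_j^{\frac{\gamma-p}{p}}\}\,\sigma_j.
\end{equation*}

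Finally I would combine this with the Sobolev-type bound \eqref{Eq4}, $\sigma_j\ge S_\alpha\nu_j^{p/p_\alpha^*}$, splitting into two cases. If $\sigma_j\ge1$ the minimum equals $1$, so $\nu_j\ge M(1)\sigma_j\ge M(1)S_\alpha\nu_j^{p/p_\alpha^*}$ and hence $\nu_j\ge(M(1)S_\alpha)^{p_\alpha^*/(p_\alpha^*-p)}=(M(1)S_\alpha)^{\frac{N-\alpha}{sp-\alpha}}$. If $\sigma_j<1$ the minimum equals $\sigma_j^{(\gamma-p)/p}$, so $\nu_j\ge M(1)\sigma_j^{\gamma/p}\ge M(1)S_\alpha^{\gamma/p}\nu_j^{\gamma/p_\alpha^*}$ and hence $\nu_j\ge(M(1)S_\alpha^{\gamma/p})^{p_\alpha^*/(p_\alpha^*-\gamma)}$. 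Taking the smaller of the two lower bounds gives the claim, and the same computation applied to $\sigma_k^{\pm},\nu_k^{\pm}$ settles the $u^{\pm}$ case.

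The main obstacle I anticipate is controlling the nonlocal Kirchhoff coefficient $M(\|u_n\|^p)$ under mere weak convergence: one cannot pass to the limit inside $M$ directly, so the monotonicity trick $M(\|u_n\|^p)a_n\ge M(a_n)a_n$ is essential, as is the fractional commutator estimate $\lim_{\varepsilon\to0}\limsup_n|R_{n,\varepsilon}|=0$, whose justification rests on the boundedness of $\{u_n\}$ and the local strong $L^p$ convergence. For the sign-changing case the additional difficulty is that the nonlocal cross-interactions prevent $\langle(-\Delta_p)^su_n,u_n^{\pm}\phi_\varepsilon\rangle$ from reducing cleanly to $|D^su_n^{\pm}|^p$, which is precisely why the one-sided inequality \eqref{Equa62} is needed to preserve the lower bound.
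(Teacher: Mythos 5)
Your proposal is correct and follows essentially the same route as the paper's proof: test $I_{\lambda,q_n}'$ against a cutoff-localized function, split the nonlocal form into $\int|D^su_n|^p\phi_\varepsilon\,dx$ plus a commutator that vanishes in the double limit (the paper cites Mosconi--Squassina for exactly this estimate), exploit the monotonicity $(M_1)$ to get the Kirchhoff coefficient past the weak limit, and conclude from \eqref{R3} and \eqref{Eq4} with the same two-case computation according to whether $\sigma_j\geq 1$ or $\sigma_j<1$. The only cosmetic differences are that the paper treats the $\nu_k^{\pm}$ case first (deducing the $\nu_j$ case as analogous, the reverse of your order) and evaluates $M$ at $\tau_{\pm}^{p}=\lim_n\|u_n^{\pm}\|^{p}\geq\sigma_k^{\pm}$ rather than at the local mass $a_\varepsilon=\int\phi_\varepsilon\,d\sigma\to\sigma_j$; both are the same monotonicity trick.
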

\begin{proof}
Fix $ k\in \mathcal{J^{\pm}} $ such that $ \nu_{j}^{\pm}>0 $, $ x_{j}^{\pm}\in \overline{\Omega} $ and for $ \varrho >0 $ consider $ \phi_{\varrho}\in C_{c}^{\infty}(B(x_{j},2\varrho)) $ such that
\begin{equation*}
0\leq \phi_{\varrho}\leq 1,\;\; \phi\arrowvert_{B(x_{j},\varrho)}=1,\;\;|\nabla\phi_{\varrho}|\leq \frac{C}{\varrho}.
\end{equation*}
Since $ \{\phi_{\varrho}u_{n}^{\pm}\} $ is bounded, we have that $ \langle I'_{\lambda,q_{n}}(u_{n}),\phi_{\varrho}u_{n}^{\pm}\rangle=o_{n}(1) $ and thus
\begin{eqnarray*}
\int _{\Omega}\frac{|u_{n}^{\pm}|^{q_{n}}}{|x|^{\alpha}}\phi_{\varrho}dx+\lambda\int _{\Omega}f(x,u_{n}^{\pm})u_{n}^{\pm}\phi _{\varrho}dx
=M(\|u_{n}\|^{p})\langle (-\Delta_{p})^{s}u_{n},\phi _{\varrho}u_{n}^{\pm}\rangle +o_{n}(1).
\end{eqnarray*}
Note that, for all $ n $
\begin{eqnarray*}
\langle (-\Delta_{p})^{s}u_{n},\phi _{\varrho}u_{n}^{\pm}\rangle &=&\int _{\mathbb{R}^{2N}}\frac{|u_{n}(x)-u_{n}(y)|^{p-2}(u_{n}(x)-u_{n}(y))(\phi _{\varrho}(x)u_{n}^{\pm}(x)-\phi _{\varrho}(y)u_{n}^{\pm}(y))}{|x-y|^{N+sp}}dxdy\\
&\geq&\int _{\mathbb{R}^{N}}|D^{s}u_{n}^{\pm}|^{p}\phi _{\varrho}dx-\left |\int _{\mathbb{R}^{2N}}\frac{|u_{n}(x)-u_{n}(y)|^{p-2}(u_{n}(x)-u_{n}(y))u_{n}^{\pm}(y)(\phi _{\varrho}(x)-\phi _{\varrho}(y))}{|x-y|^{N+sp}}dxdy\right |.
\end{eqnarray*}
Proceeding as in \cite[Lemma 3.1]{Mo}, we obtain
\begin{eqnarray*}
\limsup_{n\rightarrow \infty}\left |\int _{\mathbb{R}^{2N}}\frac{|u_{n}(x)-u_{n}(y)|^{p-2}(u_{n}(x)-u_{n}(y))u_{n}^{\pm}(y)(\phi _{\varrho}(x)-\phi _{\varrho}(y))}{|x-y|^{N+sp}}dxdy\right |\leq C\left (\int _{\mathbb{R}^{N}}|D^{s}\phi _{\varrho}|^{p}|u^{\pm}|^{p}dy\right )^{\frac{1}{p}}.
\end{eqnarray*}
Then, putting $ \|u_{n}^{\pm}\|\rightarrow \tau_{\pm}\geq 0 $ as $ n\rightarrow \infty $, we can take the limit in $ n $ to conclude that
\begin{eqnarray*}
M(\tau_{\pm}^{p})\int _{\mathbb{R}^{N}}\phi _{\varrho}d\sigma^{\pm}\leq\int _{\mathbb{R}^{N}}\phi _{\varrho}d\nu^{\pm} +\lambda \int _{\Omega} f(x,u^{\pm})u^{\pm} \phi _{\varrho}dx+C M(\tau_{\pm}^{p}) \left (\int _{\mathbb{R}^{N}}|D^{s}\phi _{\varrho}|^{p}|u^{\pm}|^{p}dy\right )^{\frac{1}{p}}.
\end{eqnarray*}
Moreover, by \cite[(2.14)]{Mo}, it follows that
\begin{equation*}
\lim_{\varrho\rightarrow 0}\int _{\mathbb{R}^{N}}|D^{s}\phi _{\varrho}|^{p}|u|^{p}dy=0
\end{equation*}
which implies in the inequality
\begin{equation*}
\nu_{k}^{\pm}\geq M(\tau_{\pm}^{p})\sigma_{k}^{\pm}.
\end{equation*}

Since $ |D^{s}u_{n}^{\pm}|^{p}\rightharpoonup^{*}\sigma ^{\pm} $, from \eqref{Eq2}, we have that $ \tau _{\pm}^{p}\geq \sigma_{k}^{\pm}  $. Then, by \eqref{R3}, it follows that 
\begin{equation*}
	\nu_{k}^{\pm}\geq M(1)\min\{1,(\sigma_{k}^{\pm})^{\frac{\gamma-p}{p}}\}\sigma_{k}^{\pm}.
\end{equation*}
From \eqref{Eq4}, we have that
\begin{equation*}
\nu_{k}^{\pm}\geq \min\left \{\left (M(1)S_{\alpha}\right )^{\frac{N-\alpha}{sp-\alpha}},\left (M(1)S_{\alpha}^{\frac{\gamma}{p}}\right )^{\frac{p_{\alpha}^{*}}{p_{\alpha}^{*}-\gamma}}\right \}.
\end{equation*}
By similar arguments, the same inequality can be obtained for $ \nu_{j} $.
\end{proof}

\begin{proposition}[PS condition]\label{Prop3}
The following statements are hold true:

\noindent (1) for $ p<q<p_{\alpha}^{*} $, $ I_{\lambda} $ satisfies $ (PS)_{c} $, for all $ c\in \mathbb{R} $;

\noindent (2) for $ q=p_{\alpha}^{*} $, $ I_{\lambda} $ satisfies $ (PS)_{c} $, for all
\begin{equation*}
c<\left (\frac{1}{\gamma}-\frac{1}{p_{\alpha}^{*}}\right )\min\left \{\left (M(1)S_{\alpha}\right )^{\frac{N-\alpha}{sp-\alpha}},\left (M(1)S_{\alpha}^{\frac{\gamma}{p}}\right )^{\frac{p_{\alpha}^{*}}{p_{\alpha}^{*}-\gamma}}\right \}.
\end{equation*}
\end{proposition}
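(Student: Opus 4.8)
The plan is to take an arbitrary $(PS)_c$ sequence $\{u_n\}\subset W_0^{s,p}(\Omega)$, i.e. $I_\lambda(u_n)\to c$ and $I_\lambda'(u_n)\to 0$, and to extract a subsequence converging strongly in $W_0^{s,p}(\Omega)$. First I would establish boundedness, which is common to both cases. For $n$ large one has, with $\mu\in(\gamma,q)$ from \hyperlink{2}{$(f_2)$},
$$c+1+\|u_n\| \ge I_\lambda(u_n)-\tfrac{1}{\mu}\langle I_\lambda'(u_n),u_n\rangle.$$
Splitting the Kirchhoff part as $\frac1p\widehat M(t)-\frac1\mu M(t)t=\big(\frac1p\widehat M(t)-\frac1\gamma M(t)t\big)+\big(\frac1\gamma-\frac1\mu\big)M(t)t$, the first bracket is positive by \eqref{R2}, the contribution $\big(\frac1\mu-\frac1q\big)\int_\Omega\frac{|u_n|^q}{|x|^\alpha}dx$ is nonnegative since $\mu<q$, and $\frac1\mu f(x,u_n)u_n-F(x,u_n)\ge0$ by \hyperlink{2}{$(f_2)$}. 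Bounding $M(\|u_n\|^p)$ from below by \eqref{R3} then gives $c+1+\|u_n\|\ge\big(\frac1\gamma-\frac1\mu\big)M(1)\min\{1,\|u_n\|^{\gamma-p}\}\|u_n\|^p$; since $\gamma>p>1$, this forces $\{u_n\}$ to be bounded. Hence, up to a subsequence, $u_n\rightharpoonup u$ in $W_0^{s,p}(\Omega)$, $u_n\to u$ a.e.\ in $\Omega$, and $\|u_n\|\to\tau\ge0$.

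The common concluding mechanism is the $(S_+)$-type property of the operator: by Lemma \ref{lem1}, $(-\Delta_p)^su_n\rightharpoonup(-\Delta_p)^su$, so $\langle(-\Delta_p)^su,u_n-u\rangle\to0$, and if moreover $\langle(-\Delta_p)^su_n,u_n-u\rangle\to0$, the standard Simon-type monotonicity inequalities for $(-\Delta_p)^s$ yield $u_n\to u$ in $W_0^{s,p}(\Omega)$. Everything thus reduces to showing $M(\|u_n\|^p)\langle(-\Delta_p)^su_n,u_n-u\rangle\to0$ and then removing the factor $M(\|u_n\|^p)$, which is legitimate when $\tau>0$ because $M(\tau^p)>0$ by \eqref{R3} (the case $\tau=0$ gives $u_n\to0=u$ directly). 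From $\langle I_\lambda'(u_n),u_n-u\rangle\to0$ that factor equals $\lambda\int_\Omega f(x,u_n)(u_n-u)dx+\int_\Omega\frac{|u_n|^{q-2}u_n(u_n-u)}{|x|^\alpha}dx+o(1)$, so it suffices to show both integrals vanish.

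In the subcritical case $q<p_\alpha^*$ this is immediate: Lemma \ref{Lemm1} handles the $f$-integral, while the compactness of $W_0^{s,p}(\Omega)\hookrightarrow L^q(\Omega,\frac{dx}{|x|^\alpha})$ (valid for $q<p_\alpha^*$) gives $u_n\to u$ in that space, so the Hardy integral tends to $0$ by H\"older. This proves $(PS)_c$ for every $c\in\mathbb{R}$.

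The critical case $q=p_\alpha^*$ is the main obstacle, since the embedding into $L^{p_\alpha^*}(\Omega,\frac{dx}{|x|^\alpha})$ is no longer compact. Here I would apply Lemma \ref{lem4} with the constant sequence $q_n\equiv p_\alpha^*$ to obtain measures $\nu,\sigma$ and atoms $\{x_j\}_{j\in\mathcal J}$, and Lemma \ref{lem6} (applicable since $I_\lambda'(u_n)\to0$) to get the quantized bound $\nu_j\ge\Lambda:=\min\{(M(1)S_\alpha)^{\frac{N-\alpha}{sp-\alpha}},(M(1)S_\alpha^{\frac{\gamma}{p}})^{\frac{p_\alpha^*}{p_\alpha^*-\gamma}}\}$ whenever $\nu_j>0$. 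The crux is to exclude all atoms using the bound on $c$. Passing to the limit in $I_\lambda(u_n)-\frac1\gamma\langle I_\lambda'(u_n),u_n\rangle\to c$ and discarding the nonnegative Kirchhoff contribution \eqref{R2} and the nonnegative term from \eqref{propf2}, I obtain $c\ge\big(\frac1\gamma-\frac1{p_\alpha^*}\big)\nu(\overline\Omega)$; if some $\nu_j>0$, then $\nu(\overline\Omega)\ge\nu_j\ge\Lambda$ by \eqref{Eq3} and Lemma \ref{lem6}, forcing $c\ge\big(\frac1\gamma-\frac1{p_\alpha^*}\big)\Lambda$, which contradicts the hypothesis on $c$. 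Therefore $\mathcal J=\emptyset$ and $\nu=\frac{|u|^{p_\alpha^*}}{|x|^\alpha}$; testing the weak-$*$ convergence against a cutoff equal to $1$ on $\overline\Omega$ gives $\int_\Omega\frac{|u_n|^{p_\alpha^*}}{|x|^\alpha}dx\to\int_\Omega\frac{|u|^{p_\alpha^*}}{|x|^\alpha}dx$, and together with $u_n\to u$ a.e.\ the Brezis--Lieb lemma yields $u_n\to u$ in $L^{p_\alpha^*}(\Omega,\frac{dx}{|x|^\alpha})$. Consequently the Hardy integral vanishes, and the $(S_+)$ argument concludes $u_n\to u$ in $W_0^{s,p}(\Omega)$. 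The delicate points I expect are the verification of the hypotheses of Lemmas \ref{lem4}--\ref{lem6} and the careful sign bookkeeping in the energy estimate that produces exactly the threshold appearing in the statement.
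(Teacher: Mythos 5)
Your proof is correct and follows essentially the same route as the paper's: boundedness via $I_\lambda(u_n)-\frac1\mu\langle I'_\lambda(u_n),u_n\rangle$ combined with \eqref{R2}, \eqref{R3} and \hyperlink{2}{$(f_2)$}, division by $M(\|u_n\|^p)$ after disposing of the case $\tau=0$, compactness of the Hardy--Sobolev embedding in the subcritical case, and in the critical case the concentration--compactness Lemma \ref{lem4} with $q_n\equiv p_\alpha^*$ plus Lemma \ref{lem6} and the same threshold contradiction via $I_\lambda(u_n)-\frac1\gamma\langle I'_\lambda(u_n),u_n\rangle$. The only cosmetic deviations are that you conclude strong convergence by the $(S_+)$/monotonicity property where the paper uses weak-to-weak continuity (Lemma \ref{lem1}) together with uniform convexity of $W_0^{s,p}(\Omega)$, and you make explicit, via Brezis--Lieb, the strong convergence in $L^{p_\alpha^*}(\Omega,\frac{dx}{|x|^\alpha})$ that the paper labels as standard.
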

\begin{proof}

\noindent \textbf{(1)} Let $ \{u_{n}\} $ be a $ (PS)_{c} $ sequence of $ I_{\lambda} $, in other words,
\begin{equation*}
I_{\lambda}(u_{n})\rightarrow c, \;\; I'_{\lambda}(u_{n})\rightarrow 0\; \operatorname{in}\;W^{-s,p}(\Omega), \; n\rightarrow \infty.
\end{equation*}

Then, by \hyperlink{2}{$ (f_{2}) $} and \eqref{R2}, since $\gamma<\mu<q $, we obtain
\begin{eqnarray*}
c+o_{n}(1)\|u_{n}\|&=&I_{\lambda}(u_{n})-\frac{1}{\mu}\langle I'_{\lambda}(u_{n}),u_{n}\rangle\\
&=&\frac{1}{p}\widehat{M}(\|u_{n}\|^{p})-\frac{1}{\mu}M(\|u_{n}\|^{p})\|u_{n}\|^{p}+\lambda \int _{\Omega}\frac{1}{\mu}f(x,u_{n})u_{n}-F(x,u_{n})dx\\
&&+\left (\frac{1}{\mu}-\frac{1}{q}\right )\int _{\Omega}\frac{|u_{n}|^{q}}{|x|^{\alpha}}dx\geq \left (\frac{1}{\gamma}-\frac{1}{\mu}\right )M(\|u_{n}\|^{p})\|u_{n}\|^{p}.
\end{eqnarray*}

\noindent \emph{Case 1:} $ \inf_{n}\|u_{n}\|=d_{\lambda}>0  $.

From \hyperlink{M1}{$ (M_{1}) $}, we have that $ M(\|u_{n}\|^{p})\geq M(d_{\lambda}^{p})>0 $ which implies in the boundedness of $ \{u_{n}\} $ since
\begin{equation*}
	c+o_{n}(1)\|u_{n}\|\geq \left (\frac{1}{\gamma}-\frac{1}{\mu}\right )M(d_{\lambda}^{p})\|u_{n}\|^{p}.
\end{equation*}
Therefore, up to a subsequence, $ u_{n}\rightharpoonup u\in W_{0}^{s,p}(\Omega) $ and since the embedding $ W_{0}^{s,p}(\Omega) \hookrightarrow L^{r}(\Omega,\frac{dx}{|x|^{\beta}}) $ is compact for each $ r\in [1,p_{\beta}^{*}) $ with $ 0\leq\beta<ps $, up to subsequence, we can suppose that
\begin{equation*}
\begin{array}{lllc}
u_{n}\rightarrow u,\, \operatorname{in}\, L^{r}(\Omega,\frac{dx}{|x|^{\beta}}),\\
u_{n}(x)\rightarrow u(x), \, \operatorname{a.e.} \, x\in \Omega,\\
\|u_{n}\|\rightarrow \tau_{\lambda}> 0.
\end{array}
 \end{equation*}
Using Lemma \ref{Lemm1}, the weak convergence of $\{ u_{n}\}$ and Lemma \ref{lem2}, we obtain
\begin{equation}\label{Equa47}
\int_{\Omega}f(x,u_{n})(u_{n}-u)dx\rightarrow 0\;\;\operatorname{and}\;\; \int_{\Omega}\frac{|u_{n}|^{q-2}u_{n}(u_{n}-u)}{|x|^{\alpha}}dx\rightarrow 0, \; n\rightarrow \infty.
\end{equation} 
On the other hand, $ \langle I'_{\lambda}(u_{n}),u_{n}-u \rangle=o_{n}(1) $, which together with \eqref{Equa47} imply
\begin{equation*}
M(\tau_{\lambda})\langle (-\Delta_{p})^{s}u_{n},u_{n}-u\rangle =o_{n}(1).
\end{equation*}
Since $ M(\tau_{\lambda})>0 $, we conclude that
\begin{equation*}
\langle (-\Delta_{p})^{s}u_{n},u_{n}-u\rangle=o_{n}(1),
\end{equation*}
which, by Lemma \ref{lem1}, implies $ \|u_{n}\|\rightarrow \|u\| $ as $ n\rightarrow \infty $ and since $ W_{0}^{s,p}(\Omega) $ is uniformly convex it follows that $ \{u_{n}\} $ converges strongly to $ u $.

\noindent \emph{Caso 2:} $  \inf _{n}\|u_{n}\|=0 $

In this case, either $ 0 $ is an accumulation point of the sequence $ \{\|u_{n}\|\} $ or $ 0 $ is a isolated point of $ \{\|u_{n}\|\} $. If the first possibility occurs, we have that $ u_{n}\rightarrow 0 $ strongly and, consequently, the result is proved. If the second one occurs, there exists a subsequence $ \{\|u_{n_{k}}\|\} $ such that $ \inf_{k}\|u_{n_{k}}\|=d_{\lambda}>0 $ and we can proceed as in case 1.  

\noindent \textbf{(2)} Let $ \{u_{n}\} $ be a $ (PS)_{c} $ sequence of $ I_{\lambda} $ with
\begin{equation*}
c<\left (\frac{1}{\gamma}-\frac{1}{p_{\alpha}^{*}}\right )\min\left \{\left (M(1)S_{\alpha}\right )^{\frac{N-\alpha}{sp-\alpha}},\left (M(1)S_{\alpha}^{\frac{\gamma}{p}}\right )^{\frac{p_{\alpha}^{*}}{p_{\alpha}^{*}-\gamma}}\right \}.
\end{equation*}
As in (1), firstly, we address the case $ \inf_{n}\|u_{n}\|=d_{\lambda}>0 $. Then, by similar way, we have that $ \{u_{n}\} $ is bounded and, up to subsequence, $ u_{n} \rightharpoonup u $ in $ W_{0}^{s,p}(\Omega) $. Applying the concentration compactness principle (see Lemma \ref{lem4}) to sequence $ \{u_{n}\} $ with $ q_{n}=p_{\alpha}^{*} $, there exist two measures $ \nu $, $ \sigma $ and an at most countable set $ \{x_{j}\}_{j\in \mathcal{J}} $ satisfying \eqref{Eq1}-\eqref{Eq4}. Supposing, by contradiction, that there exists $ j\in \mathcal{J} $ such that $ \nu_{j}>0 $, we can apply Lemma \ref{lem6} to obtain
\begin{eqnarray*}
c&=&I_{\lambda}(u_{n})-\frac{1}{\gamma}\langle I'_{\lambda}(u_{n}),u_{n}\rangle +o_{n}(1)\\
&\geq &\left (\frac{1}{\gamma}-\frac{1}{p_{\alpha}^{*}}\right )\int _{\Omega}\frac{|u_{n}|^{p_{\alpha}^{*}}}{|x|^{\alpha}}dx+o_{n}(1)\\
&\geq& \left (\frac{1}{\gamma}-\frac{1}{p_{\alpha}^{*}}\right )\nu_{j} \geq \left (\frac{1}{\gamma}-\frac{1}{p_{\alpha}^{*}}\right )\min\left \{\left (M(1)S_{\alpha}\right )^{\frac{N-\alpha}{sp-\alpha}},\left (M(1)S_{\alpha}^{\frac{\gamma}{p}}\right )^{\frac{p_{\alpha}^{*}}{p_{\alpha}^{*}-\gamma}}\right \},
\end{eqnarray*}
which is a contradiction. Therefore, $ \mathcal{J}=\emptyset $ and $ u_{n}\rightarrow u $ in $ L^{p_{\alpha}^{*}}(\Omega,\frac{dx}{|x|^{\alpha}}) $. By standard arguments $ u_{n}\rightarrow u $ in $ W_{0}^{s,p}(\Omega) $. Finally, if $ \inf_{n}\|u_{n}\|=0 $, we can proceed as in case 2 of item (1).
\end{proof}

\begin{lemma}\label{Lem7}
	There exists a closed set $ V^{\pm} $ with $ \mathcal{N}_{\lambda}^{\pm}\subset V^{\pm}\subset \mathcal{N}_{\lambda} $ and sequences $ \{v_{n}\}\subset V^{\pm} $, $ \{u_{n}\}\subset \mathcal{N}_{\lambda}^{\pm} $ such that $ \{v_{n}\} $ is a $ (PS)_{c_{V^{\pm}}} $ sequence with
\begin{equation}\label{Equa64}
	\lim_{n\rightarrow \infty}\|u_{n}-v_{n}\|=0,
\end{equation}
where
\begin{equation*}
	c_{V^{\pm}}:=\inf _{v\in V^{\pm}}I_{\lambda}(v).
\end{equation*}
\end{lemma}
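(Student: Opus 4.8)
The plan is to obtain $c_{V^{\pm}}$ as a constrained minimization level, reach it by Ekeland's variational principle, and then upgrade the resulting almost-minimizer to a genuine (free) Palais--Smale sequence via the Tarantello-type projection of Lemma \ref{lem5}. I treat only $V^{+}$; the case of $V^{-}$ is identical after exchanging positive and negative parts.

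First I would fix the set. For a small $\rho>0$ to be chosen, put
\[
V^{+}:=\{u\in\mathcal{N}_{\lambda}:\|u^{-}\|\le\rho\}.
\]
This set is closed in $W_{0}^{s,p}(\Omega)$: indeed $\mathcal{N}_{\lambda}$ is closed, being cut out by the continuous constraint $\langle I'_{\lambda}(u),u\rangle=0$ and bounded away from $0$ by Lemma \ref{Lem4}(1), while $u\mapsto\|u^{-}\|$ is continuous; moreover $\mathcal{N}_{\lambda}^{+}=\{u\in\mathcal{N}_{\lambda}:u^{-}=0\}\subset V^{+}\subset\mathcal{N}_{\lambda}$. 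As a closed subset of a Banach space $V^{+}$ is a complete metric space, and by Lemma \ref{Lem4}(2) together with $\|u\|\ge\kappa$ the $C^{1}$ functional $I_{\lambda}$ is bounded below by a positive constant on $\mathcal{N}_{\lambda}$, hence on $V^{+}$, so $c_{V^{+}}>0$ is well defined and Lemma \ref{Lem4}(2) also forces any minimizing sequence to be bounded. I would pick $\rho$ small enough that, through \eqref{Equa55}, near-minimizers are compelled to have a vanishing negative part.

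Next I would apply Ekeland's variational principle to $I_{\lambda}$ on the complete set $V^{+}$, producing $\{v_{n}\}\subset V^{+}$ with $I_{\lambda}(v_{n})\to c_{V^{+}}$ and
\[
I_{\lambda}(w)\ge I_{\lambda}(v_{n})-\tfrac{1}{n}\|w-v_{n}\|,\qquad\forall\,w\in V^{+}.
\]
To turn this one-sided, constrained almost-minimality into $I'_{\lambda}(v_{n})\to0$ in $W^{-s,p'}(\Omega)$, I would test, for each fixed $v\in W_{0}^{s,p}(\Omega)$ and small $\tau>0$, with the competitor $w_{\tau}=\xi(\tau v)\,(v_{n}-\tau v)$, which lies on $\mathcal{N}_{\lambda}$ by Lemma \ref{lem5}. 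Expanding $I_{\lambda}(w_{\tau})-I_{\lambda}(v_{n})$ to first order in $\tau$, using $\langle I'_{\lambda}(v_{n}),v_{n}\rangle=0$ and the explicit formula for $\langle\xi'(0),v\rangle$, and then letting $\tau\to0^{\pm}$, the Ekeland inequality collapses to $|\langle I'_{\lambda}(v_{n}),v\rangle|\le\tfrac{C}{n}\|v\|$; hence $\{v_{n}\}$ is a $(PS)_{c_{V^{+}}}$ sequence. Two points must be verified here: that $w_{\tau}\in V^{+}$ (and not merely in $\mathcal{N}_{\lambda}$) for $\tau$ small, which holds by keeping $v_{n}$ strictly inside the collar $\{\|u^{-}\|<\rho\}$, and that $\|\xi'(0)\|$ stays bounded along the sequence, which follows from the strict negativity of the denominator in Lemma \ref{lem5} (via \eqref{R4}, \eqref{propf1}, $\gamma<q$, and the lower bound of Lemma \ref{Lem4}(3)).

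Finally, to produce $\{u_{n}\}\subset\mathcal{N}_{\lambda}^{+}$ with $\|u_{n}-v_{n}\|\to0$, I would set $u_{n}:=t_{v_{n}^{+}}v_{n}^{+}$, where $t_{v_{n}^{+}}>0$ is the unique scalar provided by Lemma \ref{Lem1} that places $v_{n}^{+}$ on $\mathcal{N}_{\lambda}^{+}$. Since the choice of $\rho$ forces $\|v_{n}^{-}\|\to0$ along the Ekeland sequence, one checks (using $v_{n}\in\mathcal{N}_{\lambda}$ and the monotonicity of $t\mapsto\varphi'_{v_{n}^{+}}(t)/t^{\gamma-1}$ from Lemma \ref{Lem1}) that $t_{v_{n}^{+}}\to1$, whence $\|u_{n}-v_{n}\|\le\|t_{v_{n}^{+}}v_{n}^{+}-v_{n}^{+}\|+\|v_{n}^{-}\|\to0$. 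I expect the main obstacle to be the middle step: converting the one-sided Ekeland inequality on the sign-constrained set $V^{+}$ into two-sided control of the free derivative, since the test competitors must remain admissible for both the Nehari constraint and the sign collar simultaneously. This is precisely what the implicit-function projection of Lemma \ref{lem5} and the smallness of $\|v_{n}^{-}\|$ are there to guarantee.
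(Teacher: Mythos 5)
Your proposal shares the paper's core mechanism: the same collar set $V^{+}=\mathcal{N}_{\lambda}\cap\{\|u^{-}\|\le R\}$, Ekeland's variational principle on it, and the conversion of the one-sided constrained inequality into $I'_{\lambda}(v_{n})\to 0$ via the Tarantello projection of Lemma \ref{lem5}, with $\|\xi_{n}'(0)\|$ controlled through the negativity of the denominator and Lemma \ref{Lem4}(3). The genuine difference is the order of operations. The paper first replaces an arbitrary minimizing sequence $\{\tilde u_{n}\}\subset V^{+}$ by one lying in $\mathcal{N}_{\lambda}^{+}$: since $\langle I'_{\lambda}(\tilde u_{n}^{+}),\tilde u_{n}^{+}\rangle\le\langle I'_{\lambda}(\tilde u_{n}),\tilde u_{n}\rangle=0$, Corollary \ref{Cor1} yields $t_{n}\in(0,1]$ with $u_{n}:=t_{n}\tilde u_{n}^{+}\in\mathcal{N}_{\lambda}^{+}$ and $I_{\lambda}(u_{n})\le I_{\lambda}(t_{n}\tilde u_{n})\le I_{\lambda}(\tilde u_{n})$, so $\{u_{n}\}$ is still minimizing; Ekeland is then applied \emph{starting from} $\{u_{n}\}$, and \eqref{Equa64} is automatic ($\|u_{n}-v_{n}\|\le 1/\sqrt{n}$), while for admissibility of the test perturbations only the qualitative statement that $\{n:\|v_{n}^{-}\|=R\}$ is finite is required. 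You instead apply Ekeland first and reconstruct $u_{n}:=t_{v_{n}^{+}}v_{n}^{+}$ afterwards, which is workable but forces you to prove two convergences the paper never needs: $\|v_{n}^{-}\|\to 0$ and $t_{v_{n}^{+}}\to 1$.

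Both deferred claims are provable, but your stated justifications are not sufficient as they stand. First, $\|v_{n}^{-}\|\to 0$ does not follow from any ``choice of $\rho$''; it requires running the paper's interior argument quantitatively: take $\theta_{n}\in(0,1]$ with $\theta_{n}v_{n}^{+}\in\mathcal{N}_{\lambda}^{+}$ (Corollary \ref{Cor1}, legitimate because $\langle I'_{\lambda}(v_{n}^{-}),v_{n}^{-}\rangle\ge 0$ when $\|v_{n}^{-}\|\le R$ by Lemma \ref{Lem1.1}(2)), note $\theta_{n}\ge\kappa/C$ by Lemma \ref{Lem4}(1) and boundedness, and then $I_{\lambda}(v_{n})\ge I_{\lambda}(\theta_{n}v_{n})\ge I_{\lambda}(\theta_{n}v_{n}^{+})+I_{\lambda}(\theta_{n}v_{n}^{-})\ge c_{V^{+}}+R\,\theta_{n}^{\gamma}\|v_{n}^{-}\|^{\gamma}$, which forces $\|v_{n}^{-}\|\to 0$ since $I_{\lambda}(v_{n})\to c_{V^{+}}$. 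Second, and more seriously, $t_{v_{n}^{+}}\to 1$ cannot be deduced from the monotonicity of $t\mapsto\varphi'_{v_{n}^{+}}(t)/t^{\gamma-1}$ alone: a decreasing function whose value at $t=1$ tends to $0$ can perfectly well vanish far from $1$ if its decay degenerates (think of $h_{n}(t)=\tfrac{1}{n}(2-t)$). You need, in addition, (i) $\langle I'_{\lambda}(v_{n}^{+}),v_{n}^{+}\rangle\to 0$, which requires a Lipschitz-on-bounded-sets estimate for $u\mapsto\langle I'_{\lambda}(u),u\rangle$ (available from $(f_{0})$ and the growth of the Hardy term) combined with $\|v_{n}-v_{n}^{+}\|=\|v_{n}^{-}\|\to 0$; and (ii) a uniform slope bound, namely $h_{n}(t)-h_{n}(1)\le -(t^{q-\gamma}-1)\int_{\Omega}|v_{n}^{+}|^{q}|x|^{-\alpha}dx$ for $t>1$ together with $\liminf_{n}\int_{\Omega}|v_{n}^{+}|^{q}|x|^{-\alpha}dx>0$, which follows from Lemma \ref{Lem4}(3) and $\|v_{n}^{-}\|\to 0$. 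With these two ingredients supplied, your route closes, so this is a fillable gap rather than a wrong approach; but the paper's ordering (project first, then Ekeland) obtains \eqref{Equa64} for free and avoids both issues entirely.
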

\begin{proof}
	We will make the proof only for $ \mathcal{N}_{\lambda}^{+} $, because the proof for $ \mathcal{N}_{\lambda}^{-} $ is similar. Let $ R>0 $ be given in Lemma \ref{Lem1.1} and consider the open set 
	\begin{equation*}
	U^{+}:=\{u\in W_{0}^{s,p}(\Omega): \|u^{-}\|<R\}
	\end{equation*} 
	and the closed set containing $\mathcal{N}_{\lambda}^{+} $
	\begin{equation*}
	V^{+}:=\mathcal{N}_{\lambda}\cap \overline{U^{+}}.
	\end{equation*}
	
	Let $ \{\tilde{u}_{n}\}\subset V^{+} $ be a minimizing sequence for $ I_{\lambda} $ in $ V^{+} $, namely, $ I_{\lambda}(\tilde{u}_{n})\rightarrow c_{V^{+}} $, as $ n\rightarrow \infty $. Applying the first inequality of item (2) of Lemma \ref{Lem1.1} to $ \tilde{u}_{n}^{-} $, we obtain
	\begin{eqnarray*}
		\langle I'_{\lambda}(\tilde{u}_{n}^{+}),\tilde{u}_{n}^{+}\rangle& \leq & \langle I'_{\lambda}(\tilde{u}_{n}^{+}),\tilde{u}_{n}^{+}\rangle+\langle I'_{\lambda}(\tilde{u}_{n}^{-}),\tilde{u}_{n}^{-}\rangle\\
		&\leq& \langle I'_{\lambda}(\tilde{u}_{n}), \tilde{u}_{n} \rangle=0
	\end{eqnarray*}
	and, thus, by Corollary \ref{Cor1}, there exists $ t_{n}\in (0,1] $ such that $ u_{n}:=t_{n}\tilde{u}_{n}^{+}\in \mathcal{N}_{\lambda}^{+} $. Furthermore, applying the second inequality of item (2) of Lemma \ref{Lem1.1} to $ t_{n}\tilde{u}_{n}^{-} $ and using Lemma \ref{Lem1}, we have
	\begin{equation*}
	I_{\lambda}(u_{n})\leq I_{\lambda}(u_{n})+I_{\lambda}(t_{n}\tilde{u}_{n}^{-})\leq I_{\lambda}(t_{n}\tilde{u}_{n})\leq I_{\lambda}(\tilde{u}_{n}).
	\end{equation*} 
	Therefore, $ \{u_{n}\}\subset \mathcal{N}_{\lambda}^{+} $ is also a minimizing sequence for $ I_{\lambda} $ in $ V^{+} $. Applying Ekeland's variational principle (see \cite[Theorem 1.1]{EK}), there exists $ \{v_{n}\}\subset V^{+} $ such that
	\begin{equation}\label{Equa43}
	\|u_{n}-v_{n}\|\leq \frac{1}{\sqrt{n}}, \;\; I_{\lambda}(v_{n})\leq I_{\lambda}(u_{n})<c_{V^{+}}+\frac{1}{n}
	\end{equation} 
	and
	\begin{equation}\label{Equa44}
	I_{\lambda}(v_{n})<I_{\lambda}(v)+\frac{1}{\sqrt{n}}\|v_{n}-v\|,
	\end{equation}
	for all $ v\in V^{+} $, $ v\neq v_{n} $. 
	
	Note that $ v_{n}\in U^{+} $ if $ n $ is sufficiently large. In fact, by Corollary \ref{Cor1}, there exists $ \theta_{n}\in (0,1] $ such that $ \theta_{n}v_{n}^{+}\in \mathcal{N}_{\lambda}^{+} $. Since $ \{ v_{n}\} $ is bounded, there exists $ C>0 $ such that $ \|v_{n}^{+}\|^{\gamma}\leq\|v_{n}\|^{\gamma}\leq C $ and, thus, by Lemma \ref{Lem4},
	\begin{equation*}
	\theta _{n}^{\gamma}\geq \frac{\kappa ^{\gamma}}{C},
	\end{equation*}
	for all $ n\in \mathbb{N} $. Then, set $ N:=\{n\in \mathbb{N}: \|v_{n}^{-}\|=R\} $ is finite, because otherwise, using again item (2) of Lemma \ref{Lem1.1}, we get
	\begin{equation*}
	I_{\lambda}(\theta_{n}v_{n}^{-})\geq R \theta _{n}^{\gamma}\|v_{n}^{-}\|^{\gamma} \geq R^{\gamma+1}\frac{\kappa^{\gamma}}{C}>\frac{1}{n}
	\end{equation*}
	which implies
	\begin{eqnarray*}
		I_{\lambda}(v_{n}) \geq  I_{\lambda}(\theta_{n}v_{n})\geq I_{\lambda}(\theta_{n}v_{n}^{+})+I_{\lambda}(\theta_{n}v_{n}^{-})
		\geq c_{V^{+}}+\frac{1}{n},
	\end{eqnarray*}
	if $ n\in N $, $ n $ sufficiently large, contradicting \eqref{Equa43}.
	
	Since $ U^{+} $ is open, there exists $ \delta_{n}>0 $ such that $ B(v_{n},\delta_{n})\subset U^{+} $, for all $ n $ sufficiently large. From Lemma \ref{lem5}, we obtain the functions $ \xi_{n}:B(0,\epsilon_{n})\rightarrow [0,\infty) $ such that, by continuously, $ \epsilon_{n}>0 $ can be chosen satisfying 
	\begin{equation*}
	|\xi_{n}(v)-\xi_{n}(0)|<\frac{\delta_{n}}{2C}, \;\;\forall v \in B(0,\epsilon_{n}), 
	\end{equation*}
	with $ \epsilon_{n}<\frac{\delta_{n}C}{\delta_{n}+2C} $. Then, since $ \xi_{n}(0)=1 $, we have that $ \xi_{n}(v)(v_{n}-v)\in B(v_{n},\delta_{n}) \subset U^{+}$, for all $ v\in B(0,\epsilon_{n}) $. 
	
	Let $ 0\leq \rho <\epsilon_{n} $, $ v\in W_{0}^{s,p}(\Omega)\setminus \{0\} $, $ v_{\rho}:=\frac{\rho v}{\|v\|} $ and $ \eta_{\rho}:=\xi_{n}(v_{\rho})(v_{n}-v_{\rho}) $. Since $ \eta_{\rho}\in \mathcal{N}_{\lambda}\cap U^{+} $, by \eqref{Equa44}, it follows that
	\begin{equation*}
	I_{\lambda}(\eta_{\rho})-I_{\lambda}(v_{n})\geq -\frac{1}{\sqrt{n}}\|\eta _{\rho}-v_{n}\|.
	\end{equation*}
	From definition of Fr\'echet derivative, we obtain
	\begin{equation*}
	\langle I'_{\lambda}(v_{n}),\eta _{\rho}-v_{n}\rangle +o_{\rho}(\|\eta _{\rho}-v_{n}\|)\geq -\frac{1}{\sqrt{n}}\|\eta _{\rho}-v_{n}\|.
	\end{equation*}
	Thus, 
	\begin{equation*}
	\langle I'_{\lambda}(v_{n}),-v_{\rho}\rangle +(\xi_{n}(v_{\rho})-1)\langle I'_{\lambda}(v_{n}),v_{n}-v_{\rho}\rangle \geq -\frac{1}{\sqrt{n}}\|v_{\rho}-v_{n}\|+o_{\rho}(\|\eta_{\rho}-v_{n}\|).
	\end{equation*}
	
	Since $ \xi_{n}(v_{\rho})(v_{n}-v_{\rho})\in \mathcal{N}_{\lambda} $, we have that
	\begin{eqnarray*}
		&&-\rho \left \langle I'_{\lambda}(v_{n}),\frac{v}{\|v\|}\right \rangle +(\xi_{n}(v_{\rho})-1)\langle I'_{\lambda}(v_{n})-I'_{\lambda}(\eta _{\rho}),v_{n}-v_{\rho}\rangle\\
		&&\;\;\;\;\geq -\frac{1}{\sqrt{n}}\|\eta _{\rho}-v_{n}\|+o_{\rho}(\|\eta _{\rho}-v_{n}\|).
	\end{eqnarray*}
	
	Thus,
	\begin{eqnarray*}
		\left \langle I'_{\lambda}(v_{n}),\frac{v}{\|v\|}\right \rangle &\leq& \frac{1}{\sqrt{n}}\frac{\|\eta _{\rho}-v_{n}\|}{\rho}+\frac{o_{\rho}(\|\eta _{\rho}-v_{n}\|)}{\rho}\\
		&&+\frac{(\xi_{n}(v_{\rho})-1)}{\rho}\langle I'_{\lambda}(v_{n})-I'_{\lambda}(\eta_{\rho}),v_{n}-v_{\rho}\rangle.
	\end{eqnarray*}
	
	From inequalities
	\begin{equation*}
	\|\eta_{\rho}-v_{n}\|\leq |\xi_{n}(v_{\rho})-1|\|v_{n}\|+|\xi_{n}(v_{\rho})|\rho
	\end{equation*}
	and
	\begin{equation*}
	\lim_{\rho\rightarrow 0}\frac{|\xi_{n}(v_{\rho})-1|}{\rho}\leq \|\xi'_{n}(0)\|,
	\end{equation*}
	for $ \rho \rightarrow 0 $ and $ n $ fixed, it follows that
	\begin{equation}\label{Equa60}
	\left \langle I'_{\lambda}(v_{n}),\frac{v}{\|v\|}\right \rangle\leq \frac{C}{\sqrt{n}}(1+\|\xi'_{n}(0)\|). 
	\end{equation}
	
	To conclude that $ I'_{\lambda}(v_{n})\rightarrow 0 $ as $ n\rightarrow \infty $, it is enough to show that $ \{\xi'_{n}(0)\} $ is uniformly bounded in $ n $. In fact, consider $ G(u):= \langle I'_{\lambda}(u),u \rangle $ and note that
	\begin{equation*}
	\langle \xi'_{n}(0),v\rangle =\frac{\langle G'(v_{n}),v\rangle}{\langle G'(v_{n}),v_{n}\rangle}.
	\end{equation*}
	Note that $ G' $ maps bounded sets of $ W_{0}^{s,p}(\Omega) $ in bounded sets of $ W^{-s,p'}(\Omega) $. Moreover, as in the proof of Lemma \ref{lem5}, we have that
	\begin{equation*}
	\langle G'(v_{n}), v_{n}\rangle \leq (\gamma-q)\int _{\Omega}\frac{|v_{n}|^{q}}{|x|^{\alpha}}dx<0.
	\end{equation*}
	So, from Lemma \ref{Lem4}, we conclude that $ |\langle G'(v_{n}),v_{n} \rangle|\geq d $, for some $ d>0 $ and all $ n $. Therefore, $ \{\xi'_{n}(0)\} $ is uniformly bounded and from \eqref{Equa43}, \eqref{Equa60}, it follows that  $ \{v_{n}\} $ is a $ (PS)_{c_{V^{+}}} $ sequence.
\end{proof}

\subsection{The non-degenerate case}\label{Subnondeg}

In this subsection, we address the case non-degenerate $ M(0)=m_{0}>0 $. Assuming that $ M $ satisfies only \hyperlink{M1}{$ (M_{1}) $} and $ f $ satisfies \hyperlink{0}{$ (f_{0}) $}, \hyperlink{F1}{$ (F_{1}) $} and \hyperlink{F2}{$ (F_{2}) $}, we enunciate some technical results to truncated functional $ I_{\lambda,a} $. The proofs of these results it will be omitted because they are similar to proofs of the corresponding results presented in subsection \ref{Subdeg}. In fact, taking into account that $m_{0}\leq M_{a}(t)\leq a $, for all $ t\in \mathbb{R} $, it is enough replace \hyperlink{M2}{$ (M_{2}) $} by \hyperlink{M'2}{$ (M'_{2}) $}, \hyperlink{1}{$ (f_{1}) $} by \hyperlink{F1}{$ (F_{1}) $}, and conditions \hyperlink{2}{$ (f_{2}) $} and \hyperlink{3}{$ (f_{3}) $} by \hyperlink{F2}{$ (F_{2}) $}.

\begin{lemma}
	Functional $ I_{\lambda,a} $ satisfies the following geometric conditions:
	\begin{itemize}
		\item[(1)] for each $ u\in W_{0}^{s,p}(\Omega)\setminus \{0\} $, we have
		\begin{equation*}
		I_{\lambda,a}(tu^{+}+\theta u^{-})\rightarrow -\infty, \;\operatorname{as}\; |(t,\theta)|\rightarrow \infty;
		\end{equation*}
		\item[(2)] there exists $ R>0 $ such that 
		\begin{equation*}
			I_{\lambda,a}(u)\geq R\|u\|^{p}\; \operatorname{and}\; \langle I'_{\lambda,a}(u),u \rangle \geq R \|u\|^{p}.
		\end{equation*}
	\end{itemize}
\end{lemma}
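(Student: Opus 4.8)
The plan is to mirror the proof of Lemma \ref{Lem1.1}, replacing the one-sided estimates \eqref{R1} and \eqref{R3} for $M$ by the two-sided bound \eqref{Ma1}, namely $m_{0}\leq M_{a}(t)\leq a$ for all $t\geq 0$, and using \hyperlink{F1}{$(F_{1})$} (whose lowest exponent is $p-1$) in place of \hyperlink{1}{$(f_{1})$}. The uniform positivity $M_{a}\geq m_{0}$ is exactly what upgrades the behaviour near zero from $\|u\|^{\gamma}$ to $\|u\|^{p}$, and the upper bound $M_{a}\leq a$ plays the role that \eqref{R1} played in part (1).

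For part (1), integrating $M_{a}(t)\leq a$ gives $\widehat{M}_{a}(\|u\|^{p})\leq a\|u\|^{p}$, so together with $F\geq 0$ (see \eqref{propf3}) I would bound
\begin{equation*}
I_{\lambda,a}(tu^{+}+\theta u^{-})\leq \frac{a}{p}\|tu^{+}+\theta u^{-}\|^{p}-\frac{1}{q}\int_{\Omega}\frac{|tu^{+}+\theta u^{-}|^{q}}{|x|^{\alpha}}dx.
\end{equation*}
Since $u^{+}$ and $u^{-}$ have disjoint supports, the Hardy term splits as $t^{q}\int_{\Omega}|u^{+}|^{q}|x|^{-\alpha}dx+\theta^{q}\int_{\Omega}|u^{-}|^{q}|x|^{-\alpha}dx$, while the seminorm satisfies $\|tu^{+}+\theta u^{-}\|^{p}\leq 2^{p-1}(t^{p}\|u^{+}\|^{p}+\theta^{p}\|u^{-}\|^{p})$. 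As $q>p$, the negative $q$-th powers dominate the positive $p$-th powers, forcing $I_{\lambda,a}(tu^{+}+\theta u^{-})\rightarrow -\infty$ as $|(t,\theta)|\rightarrow \infty$, exactly as in Lemma \ref{Lem1.1}(1).

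For part (2), integrating $M_{a}(t)\geq m_{0}$ yields $\frac{1}{p}\widehat{M}_{a}(\|u\|^{p})\geq \frac{m_{0}}{p}\|u\|^{p}$. Using \hyperlink{F1}{$(F_{1})$} I would estimate $F(x,u)\leq \frac{\epsilon}{p}|u|^{p}+\frac{\epsilon}{p^{*}}|u|^{p^{*}}+\frac{C_{\epsilon,r}}{r}|u|^{r}$, and then apply the continuous embeddings $W_{0}^{s,p}(\Omega)\hookrightarrow L^{t}(\Omega,dx/|x|^{\beta})$ together with the Hardy--Sobolev inequality to obtain, for $\|u\|\leq 1$,
\begin{equation*}
I_{\lambda,a}(u)\geq \left(\frac{m_{0}}{p}-\lambda C\epsilon\right)\|u\|^{p}-C_{\epsilon}\|u\|^{p^{*}}-C_{\epsilon,r}\|u\|^{r}-C_{q}\|u\|^{q}.
\end{equation*}
The same scheme, using $M_{a}(\|u\|^{p})\geq m_{0}$ on the kinetic term of $\langle I'_{\lambda,a}(u),u\rangle$, handles the derivative inequality.

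The crux is to absorb the $\epsilon$-term coming from the $|u|^{p}$ part of \hyperlink{F1}{$(F_{1})$} into the main term $\frac{m_{0}}{p}\|u\|^{p}$: since $\lambda$ is fixed I would choose $\epsilon>0$ small enough that $\frac{m_{0}}{p}-\lambda C\epsilon>0$. Because the remaining exponents $p^{*},r,q$ all exceed $p$, there is then $R\in(0,1]$ small enough that for $\|u\|\leq R$ the three higher-order terms are absorbed, leaving $I_{\lambda,a}(u)\geq R\|u\|^{p}$, and likewise $\langle I'_{\lambda,a}(u),u\rangle\geq R\|u\|^{p}$, after relabelling the constant. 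No genuinely new difficulty arises beyond this absorption, which is the same device as in Lemma \ref{Lem1.1}(2) but now yielding the exponent $p$ rather than $\gamma$, precisely because $M_{a}$ is non-degenerate.
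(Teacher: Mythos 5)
Your proposal is correct and takes precisely the route the paper intends: the paper omits this proof, remarking that it suffices to repeat the argument of Lemma \ref{Lem1.1} with the two-sided bound $m_{0}\leq M_{a}(t)\leq a$ of \eqref{Ma1} replacing \eqref{R1} and \eqref{R3}, and with \hyperlink{F1}{$(F_{1})$} replacing \hyperlink{1}{$(f_{1})$}, which is exactly what you carry out. Your observation that the uniform lower bound $M_{a}\geq m_{0}$ is what upgrades the small-norm estimate from $\|u\|^{\gamma}$ to $\|u\|^{p}$ matches the paper's reasoning.
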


\begin{lemma}
	For each $ u\in W_{0}^{s,p}(\Omega)\setminus \{0\} $, there exists unique $ t_{u}>0 $ such that $ t_{u}u\in \mathcal{N}_{\lambda,a} $. In particular, $ \mathcal{N}_{\lambda,a}\neq \emptyset $ and $ \mathcal{N}_{\lambda,a}^{\pm}\neq \emptyset $. Moreover, $ I_{\lambda,a}(t_{u}u)>I_{\lambda,a}(tu) $, for all $ t\geq 0 $, $ t\neq t_{u} $.
\end{lemma}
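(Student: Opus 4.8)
The plan is to mimic the argument of Lemma~\ref{Lem1}, replacing the exponent $\gamma$ by $\mu\in(p,q)$ from \hyperlink{F2}{$(F_2)$}, the function $M$ by its truncation $M_a$, and the monotonicity hypotheses \hyperlink{M2}{$(M_2)$}, \hyperlink{3}{$(f_3)$} by \hyperlink{M'2}{$(M'_2)$} and \hyperlink{F2}{$(F_2)$}. Fix $u\in W_0^{s,p}(\Omega)\setminus\{0\}$ and consider $\varphi_{u,a}(t)=I_{\lambda,a}(tu)$ for $t\geq 0$. First I would invoke the preceding geometric lemma: its item (2) gives $\varphi_{u,a}(t)=I_{\lambda,a}(tu)\geq R\,t^{p}\|u\|^{p}>0$ for $t$ small, while item (1) applied with $\theta=t$ yields $\varphi_{u,a}(t)=I_{\lambda,a}(tu^{+}+tu^{-})\to-\infty$ as $t\to\infty$. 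Hence $\varphi_{u,a}$ attains a strictly positive maximum at some $t_u>0$, so $\varphi_{u,a}'(t_u)=0$; since $\varphi_{u,a}'(t)=\tfrac{1}{t}\langle I_{\lambda,a}'(tu),tu\rangle$ for $t>0$, this is exactly the statement $t_u u\in\mathcal{N}_{\lambda,a}$.

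The key step, which delivers both uniqueness and the strict maximality claim, is to show that $t\mapsto\varphi_{u,a}'(t)/t^{\mu-1}$ is strictly decreasing on $(0,\infty)$. Differentiating,
\[
\varphi_{u,a}'(t)=M_a(t^{p}\|u\|^{p})\,t^{p-1}\|u\|^{p}-\lambda\int_\Omega f(x,tu)u\,dx-t^{q-1}\int_\Omega\frac{|u|^{q}}{|x|^{\alpha}}\,dx,
\]
and dividing by $t^{\mu-1}$ I would treat the three resulting terms separately. The Kirchhoff term equals $\dfrac{M_a(t^{p}\|u\|^{p})}{(t^{p}\|u\|^{p})^{(\mu-p)/p}}\,\|u\|^{\mu}$, which decreases in $t$ by \hyperlink{M'2}{$(M'_2)$} (equivalently, via the positivity-and-monotonicity statement \eqref{M'a2}); the critical term is $-t^{q-\mu}\int_\Omega|u|^{q}/|x|^{\alpha}\,dx$, which decreases because $q>\mu$; and the nonlinear term, after writing the integrand pointwise as $\dfrac{f(x,tu)}{|tu|^{\mu-2}(tu)}\,|u|^{\mu}$, decreases by the sign-split monotonicity in \hyperlink{F2}{$(F_2)$} (for $u(x)>0$ the quotient increases in $t$, for $u(x)<0$ it also increases in $t$ since $tu(x)$ moves deeper into the region where $f/|t|^{\mu-2}t$ is decreasing), so that $-\lambda$ times the integral is decreasing. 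Consequently $\varphi_{u,a}'(t)/t^{\mu-1}$ is strictly decreasing and vanishes only at $t_u$, whence $t_u$ is the unique zero of $\varphi_{u,a}'$ on $(0,\infty)$ and $\varphi_{u,a}(t_u)>\varphi_{u,a}(t)$ for all $t\geq 0$ with $t\neq t_u$.

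Finally, $\mathcal{N}_{\lambda,a}\neq\emptyset$ is immediate from the construction, and choosing $u\geq 0$ (respectively $u\leq 0$) nontrivial gives $t_u u\in\mathcal{N}_{\lambda,a}^{+}$ (respectively $\mathcal{N}_{\lambda,a}^{-}$), so $\mathcal{N}_{\lambda,a}^{\pm}\neq\emptyset$. I expect the only delicate point to be the pointwise monotonicity of the $f$-term for both signs of $u(x)$, which is precisely where the two-sided form of \hyperlink{F2}{$(F_2)$} must be used exactly as \hyperlink{3}{$(f_3)$} was used in Lemma~\ref{Lem1}; everything else is a routine transcription of that proof, with the bounds $m_0\leq M_a\leq a$ from \eqref{Ma1} ensuring the relevant integrals and suprema behave as before.
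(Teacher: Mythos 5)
Your proof is correct and is precisely the route the paper intends: the paper omits this proof, stating that it is obtained from the proof of Lemma \ref{Lem1} by replacing $(M_{2})$ with $(M'_{2})$, $(f_{1})$ with $(F_{1})$, and $(f_{2})$, $(f_{3})$ with $(F_{2})$ (equivalently, $\gamma$ with $\mu$), which is exactly the substitution you carry out. Your explicit verification that $\varphi_{u,a}'(t)/t^{\mu-1}$ is strictly decreasing, including the sign-split use of $(F_{2})$ on the sets where $u(x)>0$ and $u(x)<0$, simply fills in the details that the paper's one-line assertion in Lemma \ref{Lem1} leaves implicit.
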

\begin{corollary}
	For each $ u\in W_{0}^{s,p}(\Omega)\setminus \{0\} $ with $ \langle I'_{\lambda,a}(u),u\rangle \leq 0 $, there exists unique $ t_{u}\in (0,1] $ such that $ t_{u}u\in \mathcal{N}_{\lambda,a} $.
\end{corollary}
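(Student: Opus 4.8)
The plan is to transcribe the proof of Corollary \ref{Cor1} into the non-degenerate, truncated setting, replacing the exponent $\gamma$ by $\mu$ and the objects $M$, $I_\lambda$, $\varphi_u$ by $M_a$, $I_{\lambda,a}$, $\varphi_{u,a}$. By the preceding lemma (the non-degenerate counterpart of Lemma \ref{Lem1}), for each $u\in W_0^{s,p}(\Omega)\setminus\{0\}$ there is a unique $t_u>0$ with $t_u u\in\mathcal{N}_{\lambda,a}$, which is equivalent to $\varphi_{u,a}'(t_u)=0$. The hypothesis $\langle I_{\lambda,a}'(u),u\rangle\leq 0$ is precisely $\varphi_{u,a}'(1)\leq 0$, so existence and uniqueness of $t_u$ are already in hand and it only remains to locate $t_u$ relative to $1$.

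The key step is to show that $t\mapsto \varphi_{u,a}'(t)/t^{\mu-1}$ is strictly decreasing on $(0,\infty)$. Writing
\[
\frac{\varphi_{u,a}'(t)}{t^{\mu-1}} = \|u\|^{\mu}\,\frac{M_a(t^p\|u\|^p)}{(t^p\|u\|^p)^{(\mu-p)/p}} - \lambda\int_\Omega \frac{f(x,tu)}{t^{\mu-1}}\,u\,dx - t^{q-\mu}\int_\Omega \frac{|u|^q}{|x|^\alpha}\,dx,
\]
I would verify each summand separately: the first is decreasing by the truncated condition \hyperlink{M'2}{$(M'_{2})$}; the middle term is decreasing because \hyperlink{F2}{$(F_{2})$} makes $f(x,\tau)/|\tau|^{\mu-2}\tau$ increasing for $\tau>0$ and decreasing for $\tau<0$; and the last term is decreasing since $\mu\in(p,q)$ gives $q-\mu>0$. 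This is exactly the monotonicity that replaced the role of $\frac{\varphi_u'(t)}{t^{\gamma-1}}$ in Lemma \ref{Lem1} and Corollary \ref{Cor1}.

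Granted this monotonicity, the conclusion is immediate. Since $\varphi_{u,a}'(t_u)/t_u^{\mu-1}=0$, if we had $t_u>1$ the decreasing property would force $\varphi_{u,a}'(1)=\varphi_{u,a}'(1)/1^{\mu-1}>\varphi_{u,a}'(t_u)/t_u^{\mu-1}=0$, contradicting $\varphi_{u,a}'(1)\leq 0$; hence $t_u\leq 1$, and therefore $t_u\in(0,1]$. There is no genuine obstacle, since the statement is a routine adaptation of Corollary \ref{Cor1} made possible by the fact that the truncated function $M_a$ satisfies \hyperlink{M'2}{$(M'_{2})$}. The only point demanding care is the sign bookkeeping in the $f$-term of the monotonicity computation, where one splits $\Omega$ into the regions $\{u>0\}$ and $\{u<0\}$ in order to invoke \hyperlink{F2}{$(F_{2})$} with the correct direction of monotonicity on each piece.
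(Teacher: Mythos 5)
Your proposal is correct and matches the paper's intended argument: the paper omits the proof of this corollary, stating that the non-degenerate results follow from the degenerate ones (here Corollary \ref{Cor1} and Lemma \ref{Lem1}) by replacing \hyperlink{M2}{$(M_{2})$} with \hyperlink{M'2}{$(M'_{2})$} and \hyperlink{2}{$(f_{2})$}, \hyperlink{3}{$(f_{3})$} with \hyperlink{F2}{$(F_{2})$}, i.e.\ exactly the $\gamma\mapsto\mu$ substitution and the monotonicity of $t\mapsto\varphi_{u,a}'(t)/t^{\mu-1}$ that you carry out explicitly. Your term-by-term verification of that monotonicity (including the sign split over $\{u>0\}$ and $\{u<0\}$) is the correct way to make the paper's one-line claim rigorous.
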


\begin{lemma}
	For each $ w\in W_{0}^{s,p}(\Omega) $ with $ w^{+}\neq 0$ and $ w^{-}\neq 0 $, there exists unique pair $ (t_{w},\theta_{w})\in (0,\infty)\times (0,\infty) $ such that 
	\begin{equation*}
	t_{w}w^{+}+\theta _{w} w^{-}\in \mathcal{M}_{\lambda,a}.
	\end{equation*}
	In particular, $ \mathcal{M}_{\lambda,a}\neq \emptyset $. Furthermore, for all $ t,\theta \geq 0 $ with $ (t,\theta)\neq (t_{w},\theta _{w}) $, we have
	\begin{equation*}
	I_{\lambda,a}(tw^{+}+\theta w^{-})<I_{\lambda,a}(t_{w}w^{+}+\theta _{w}w^{-}).
	\end{equation*}
\end{lemma}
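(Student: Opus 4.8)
The plan is to mimic the proof of Lemma \ref{lem3}, replacing $(M_{2})$ by \hyperlink{M'2}{$(M'_{2})$}, $(f_{1})$ by \hyperlink{F1}{$(F_{1})$} and $(f_{3})$ by \hyperlink{F2}{$(F_{2})$}, and running the argument with the exponent $\mu$ in place of $\gamma$. Throughout I would use that $M_{a}$ satisfies \hyperlink{M1}{$(M_{1})$} and \hyperlink{M'2}{$(M'_{2})$} together with the two-sided bound $m_{0}\leq M_{a}(t)\leq a$ from \eqref{Ma1}.

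For the existence of the pair, I would first invoke item (1) of the geometric lemma for $I_{\lambda,a}$ to get $\psi_{w,a}(t,\theta):=I_{\lambda,a}(tw^{+}+\theta w^{-})\to -\infty$ as $|(t,\theta)|\to\infty$; by continuity $\psi_{w,a}$ then attains a global maximum at some $(t_{w},\theta_{w})\in[0,\infty)\times[0,\infty)$. To push this maximizer into the interior, I would combine item (2) of the geometric lemma (which gives $I_{\lambda,a}(\theta w^{-})\geq R\theta^{p}\|w^{-}\|^{p}>0$ for small $\theta>0$) with the superadditivity $I_{\lambda,a}(w)\geq I_{\lambda,a}(w^{+})+I_{\lambda,a}(w^{-})$, valid because $M_{a}$ is increasing and hence $\widehat{M}_{a}$ is superadditive. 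This yields, for each fixed $t\geq 0$, that $\psi_{w,a}(t,0)<\psi_{w,a}(t,\theta)$ for small $\theta$, and symmetrically in the other variable, so $(t_{w},\theta_{w})$ is interior. Being an interior maximizer, it is a critical point of $\psi_{w,a}$, i.e. $\Psi_{w,a}(t_{w},\theta_{w})=(0,0)$, which is precisely $t_{w}w^{+}+\theta_{w}w^{-}\in\mathcal{M}_{\lambda,a}$; in particular $\mathcal{M}_{\lambda,a}\neq\emptyset$.

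For uniqueness it suffices, exactly as before, to show that if both $w$ and $tw^{+}+\theta w^{-}$ lie in $\mathcal{M}_{\lambda,a}$ with $t,\theta>0$, then $(t,\theta)=(1,1)$. Assuming without loss of generality that $\theta\leq t$, the nonlocal estimates analogous to \eqref{Equa30}--\eqref{Equa31} give $\|tw^{+}+\theta w^{-}\|^{p}\leq t^{p}\|w\|^{p}$ and $\langle(-\Delta_{p})^{s}(tw^{+}+\theta w^{-}),tw^{+}\rangle\leq t^{p}\langle(-\Delta_{p})^{s}w,w^{+}\rangle$. Since $M_{a}$ is increasing, dividing the $w^{+}$-Nehari identity for $tw^{+}+\theta w^{-}$ by $t^{\mu-1}$ and subtracting the one for $w$ produces, after invoking \hyperlink{M'2}{$(M'_{2})$} and the monotonicity of $t\mapsto f(x,t)/(|t|^{\mu-2}t)$ from \hyperlink{F2}{$(F_{2})$}, the bound $0<\theta\leq t\leq 1$. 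Running the identical computation with the two $w^{-}$-equations gives $1\leq\theta\leq t$, and the two chains together force $t=\theta=1$. Finally, uniqueness of the maximizer yields the strict inequality $I_{\lambda,a}(tw^{+}+\theta w^{-})<I_{\lambda,a}(t_{w}w^{+}+\theta_{w}w^{-})$ for $(t,\theta)\neq(t_{w},\theta_{w})$.

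The main obstacle is the uniqueness step. Because of the nonlocal interaction between $w^{+}$ and $w^{-}$, one has $\|w\|^{p}>\|w^{+}\|^{p}+\|w^{-}\|^{p}$ (the strict inequality \eqref{Equa62}), so the two scaling directions do not decouple; one must track carefully which of the quantities $A^{\pm}(w),B^{\pm}(w)$ scale like $t^{p}$ and which like $\theta^{p}$ to keep the chain of inequalities oriented correctly. The monotonicity hypotheses \hyperlink{M'2}{$(M'_{2})$} and \hyperlink{F2}{$(F_{2})$}, both phrased with the exponent $\mu$, are precisely what close the argument, playing here the role that $(M_{2})$ and $(f_{3})$ played with $\gamma$ in Lemma \ref{lem3}.
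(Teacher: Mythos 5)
Your proposal is correct and matches the paper's intent exactly: the paper omits this proof, stating that it suffices to repeat the argument of Lemma \ref{lem3} with $(M_{2})$ replaced by \hyperlink{M'2}{$(M'_{2})$}, \hyperlink{1}{$(f_{1})$} by \hyperlink{F1}{$(F_{1})$}, and \hyperlink{3}{$(f_{3})$} by \hyperlink{F2}{$(F_{2})$}, using the bound $m_{0}\leq M_{a}\leq a$ — which is precisely the substitution scheme (with $\mu$ playing the role of $\gamma$) that you carry out. Your execution of the existence step via the interior maximizer and of the uniqueness step via the scaled Nehari identities is faithful to the degenerate-case proof, so no gap remains.
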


\begin{corollary}
	For each $ w\in W_{0}^{s,p}(\Omega) $ with $ w^{\pm}\neq 0 $ and $ \langle I'_{\lambda,a}(w), w^{\pm}\rangle \leq 0$, there exists unique pair $ (t_{w},\theta_{w})\in (0,1]\times (0,1] $ such that $ t_{w}w^{+}+\theta _{w}w^{-}\in \mathcal{M}_{\lambda,a} $.
\end{corollary}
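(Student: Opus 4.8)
The plan is to reduce everything to the preceding lemma in this subsection (the non-degenerate analogue of Lemma~\ref{lem3}), which already furnishes a unique pair $(t_w,\theta_w)\in(0,\infty)\times(0,\infty)$ with $t_w w^{+}+\theta_w w^{-}\in\mathcal{M}_{\lambda,a}$. Existence and uniqueness are therefore free, and the only genuinely new content is to upgrade the location of this pair from $(0,\infty)^2$ to $(0,1]\times(0,1]$; this is where the sign hypothesis $\langle I'_{\lambda,a}(w),w^{\pm}\rangle\le 0$ is used. The argument will mimic the bounding computation from the uniqueness part of Lemma~\ref{lem3} and Corollary~\ref{Cor2}, with \hyperlink{M2}{$(M_{2})$}, \hyperlink{3}{$(f_{3})$} and $\gamma$ replaced throughout by \hyperlink{M'2}{$(M'_{2})$}, \hyperlink{F2}{$(F_{2})$} and $\mu$, and now comparing the membership identity for the scaled function against the hypothesis \emph{inequality} rather than against a second membership identity.

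Concretely, I would argue by contradiction on $\max\{t_w,\theta_w\}$. Assume without loss of generality $\theta_w\le t_w$ and suppose $t_w>1$. Writing out $\langle I'_{\lambda,a}(t_w w^{+}+\theta_w w^{-}),t_w w^{+}\rangle=0$, I would bound the Kirchhoff--operator side using the nonlocal inequalities \eqref{Equa30}--\eqref{Equa31} (which point the right way precisely because $t_w$ is the larger coefficient) together with $M_a$ increasing, obtaining $\lambda\int_{\Omega_{+}} f(x,t_w w^{+})\,t_w w^{+}\,dx + t_w^{q}\int_{\Omega}|w^{+}|^{q}|x|^{-\alpha}\,dx \le M_a(t_w^{p}\|w\|^{p})\,t_w^{p}\,\langle(-\Delta_{p})^{s}w,w^{+}\rangle$. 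Dividing by $t_w^{\mu}$ and setting $g(t):=M_a(t^{p}\|w\|^{p})\,t^{-(\mu-p)}$, which is decreasing by \hyperlink{M'2}{$(M'_{2})$}, I would then put this beside the hypothesis $\langle I'_{\lambda,a}(w),w^{+}\rangle\le 0$, namely $g(1)\,\langle(-\Delta_{p})^{s}w,w^{+}\rangle\le \lambda\int_{\Omega_{+}} f(x,w^{+})w^{+}\,dx + \int_{\Omega}|w^{+}|^{q}|x|^{-\alpha}\,dx$.

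Chaining the two estimates and using that \hyperlink{F2}{$(F_{2})$} makes $f(x,t)/t^{\mu-1}$ increasing in $t>0$ (so the $f$-terms only help the inequality), together with $q>\mu$ and $t_w>1$, which yields $t_w^{q-\mu}\int_{\Omega}|w^{+}|^{q}|x|^{-\alpha}\,dx > \int_{\Omega}|w^{+}|^{q}|x|^{-\alpha}\,dx$ \emph{strictly} (the Hardy integral being positive since $w^{+}\neq 0$), I would reach $g(t_w)\,\langle(-\Delta_{p})^{s}w,w^{+}\rangle > g(1)\,\langle(-\Delta_{p})^{s}w,w^{+}\rangle$. As $\langle(-\Delta_{p})^{s}w,w^{+}\rangle\ge\|w^{+}\|^{p}>0$ by \eqref{Equa62}, this forces $g(t_w)>g(1)$, contradicting that $g$ is decreasing. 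Hence $t_w\le 1$, and then $\theta_w\le t_w\le 1$; the case $t_w\le\theta_w$ is symmetric, using the $w^{-}$-equation and $\langle I'_{\lambda,a}(w),w^{-}\rangle\le 0$. The main obstacle I anticipate is bookkeeping rather than conceptual: because of the nonlocal coupling term $B^{\pm}(w)$ one cannot split $\|t_w w^{+}+\theta_w w^{-}\|^{p}$ additively, so the directional inequalities \eqref{Equa30}--\eqref{Equa31} hold only for the component carrying the larger coefficient. This is exactly what forces the ``without loss of generality'' reduction and the careful tracking of which of $t_w,\theta_w$ is maximal, and it is the single step where a careless estimate would reverse an inequality and break the argument.
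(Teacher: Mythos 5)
Your proof is correct and takes essentially the same approach as the paper: there, this corollary is proved exactly like its degenerate counterpart (Corollary \ref{Cor2}), namely existence and uniqueness come from the preceding lemma, and the location $(t_{w},\theta_{w})\in(0,1]\times(0,1]$ comes from repeating the scaling comparison of Lemma \ref{lem3}'s uniqueness step against the hypothesis inequality, with $(M_{2})$, $(f_{3})$, $\gamma$ replaced by $(M'_{2})$, $(F_{2})$, $\mu$ --- precisely the substitutions you make. Your contradiction via the decreasing function $g(t)=M_{a}(t^{p}\|w\|^{p})t^{-(\mu-p)}$ is just a repackaging of the paper's combined inequality, and your handling of the nonlocal coupling (the inequalities \eqref{Equa30}--\eqref{Equa31} requiring the larger coefficient) matches the paper's argument.
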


\begin{lemma}\label{Lem21}
	\begin{itemize}
 	\item[(1)] There exists $ \kappa:=\kappa _{\lambda}>0 $ such that for all $ u\in \mathcal{N}_{\lambda,a} $,
	\begin{equation*}
		\|u\| \geq \kappa
	\end{equation*} 
and for all $ w\in \mathcal{M}_{\lambda,a} $,
\begin{equation*}
	\|w^{+}\|,\|w^{-}\|\geq \kappa;
\end{equation*}
	\item[(2)] For all $ u\in \mathcal{N}_{\lambda,a} $,
	\begin{equation*}
		I_{\lambda,a}(u)\geq \left (\frac{m_{0}}{p}-\frac{a}{\mu}\right )\|u\|^{p};
	\end{equation*}
	\item[(3)] If $ \{u_{n}\} \subset \mathcal{N}_{\lambda,a} $ and $ \{w_{n}\}\subset \mathcal{M}_{\lambda,a} $ are bounded sequences, then 
	\begin{equation*}
		\liminf_{n\rightarrow \infty} \int _{\Omega} \frac{|u_{n}|^{q}}{|x|^{\alpha}}dx>0\; \operatorname{and}\; \liminf_{n\rightarrow \infty} \int _{\Omega} \frac{|w_{n}|^{q}}{|x|^{\alpha}}dx>0.
	\end{equation*}
\end{itemize}
\end{lemma}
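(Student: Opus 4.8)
The plan is to mirror the three-part argument of Lemma \ref{Lem4}, replacing the degenerate estimates \eqref{R1}--\eqref{R3} on $M$ by the two-sided bound $m_0 \le M_a(t) \le a$ from \eqref{Ma1}, the growth condition \hyperlink{1}{$(f_1)$} by \hyperlink{F1}{$(F_1)$}, and the pair \hyperlink{2}{$(f_2)$}--\hyperlink{3}{$(f_3)$} by \hyperlink{F2}{$(F_2)$}. Since $M_a$ is now bounded below by the constant $m_0$ rather than by a power of $t$, the exponent $\gamma$ that appeared throughout the degenerate estimates is systematically replaced by $p$.

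For part~(1), fix $w \in \mathcal{M}_{\lambda,a}$, so that $\langle I'_{\lambda,a}(w), w^\pm\rangle = 0$. Combining $M_a \ge m_0$, the monotonicity \hyperlink{M1}{$(M_1)$} (which gives $M_a(\|w^\pm\|^p) \le M_a(\|w\|^p)$ because $\|w^\pm\|^p \le \|w\|^p$), and the decomposition inequality $\|w^\pm\|^p \le \langle (-\Delta_p)^s w, w^\pm\rangle$ from \eqref{Equa62}, one reaches $m_0\|w^\pm\|^p \le M_a(\|w\|^p)\langle(-\Delta_p)^s w,w^\pm\rangle = \lambda\int_\Omega f(x,w^\pm)w^\pm\,dx + \int_\Omega \frac{|w^\pm|^q}{|x|^\alpha}\,dx$. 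Bounding the right-hand side via \hyperlink{F1}{$(F_1)$}, H\"older's inequality and the continuous embeddings produces $(m_0 - C\lambda\epsilon)\|w^\pm\|^p \le \lambda C_\epsilon\|w^\pm\|^{p^*} + \lambda C_{\epsilon,r}\|w^\pm\|^r + C\|w^\pm\|^q$; choosing $\epsilon$ so small that $m_0 - C\lambda\epsilon > 0$ and invoking $p^*, r, q > p$ yields the uniform bound $\|w^\pm\| \ge \kappa$. The argument for $u \in \mathcal{N}_{\lambda,a}$ is identical with $\|u\|$ in place of $\|w^\pm\|$.

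For part~(2), take $u \in \mathcal{N}_{\lambda,a}$ and write $I_{\lambda,a}(u) = I_{\lambda,a}(u) - \tfrac{1}{\mu}\langle I'_{\lambda,a}(u),u\rangle$. The Kirchhoff contribution $\tfrac{1}{p}\widehat{M}_a(\|u\|^p) - \tfrac{1}{\mu}M_a(\|u\|^p)\|u\|^p$ is estimated from below using $\widehat{M}_a(t) \ge m_0 t$ and $M_a(t) \le a$, which gives $(\tfrac{m_0}{p} - \tfrac{a}{\mu})\|u\|^p$. The two remaining contributions are nonnegative: $(\tfrac{1}{\mu} - \tfrac{1}{q})\int_\Omega \tfrac{|u|^q}{|x|^\alpha}\,dx \ge 0$ since $\mu < q$, and $\lambda\int_\Omega (\tfrac{1}{\mu}f(x,u)u - F(x,u))\,dx \ge 0$ by the consequence $\tfrac{1}{\mu}f(x,t)t - F(x,t) \ge 0$ of \hyperlink{F2}{$(F_2)$}. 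Collecting these yields the asserted inequality.

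Part~(3) then follows by feeding the lower bound $\|w_n^\pm\| \ge \kappa$ of part~(1) into \hyperlink{F1}{$(F_1)$} with a fixed $r \in (p,q)$, exactly as in Lemma \ref{Lem4}(3): the chain $m_0\kappa^p \le m_0\|w_n^\pm\|^p \le \epsilon C_\lambda + C_{\lambda,r,\epsilon}\big((\int_\Omega \tfrac{|w_n^\pm|^q}{|x|^\alpha}\,dx)^{r/q} + \int_\Omega \tfrac{|w_n^\pm|^q}{|x|^\alpha}\,dx\big)$ forces $\liminf_n \int_\Omega \tfrac{|w_n^\pm|^q}{|x|^\alpha}\,dx > 0$, and the same applies to $\{u_n\}$. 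I do not expect any genuine obstacle here beyond bookkeeping; the only point that really needs checking is that the constant $\tfrac{m_0}{p} - \tfrac{a}{\mu}$ in part~(2) is strictly positive, and this is precisely guaranteed by the choice $a < \tfrac{1}{p}m_0\mu$ recorded in \eqref{Equa51}--\eqref{Ma1}.
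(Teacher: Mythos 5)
Your proposal is correct and is essentially the paper's own (omitted) argument: the paper states that the proofs in the non-degenerate subsection follow from those of the degenerate case (here, Lemma \ref{Lem4}) by exactly the substitutions you make, namely $m_{0}\leq M_{a}\leq a$ in place of \eqref{R1}--\eqref{R3}, $(F_{1})$ for $(f_{1})$, $(F_{2})$ for $(f_{2})$--$(f_{3})$, with the exponent $\gamma$ replaced by $p$ throughout. Your closing check that $\tfrac{m_{0}}{p}-\tfrac{a}{\mu}>0$, via the choice $a<\tfrac{1}{p}m_{0}\mu$ in \eqref{Equa51} and \eqref{Ma1}, is precisely the point that justifies the explicit constant in part (2).
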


\begin{lemma}
	For each $ u\in \mathcal{N}_{\lambda,a} $, there exists $ \epsilon >0 $ and a differentiable function $ \xi_{a}: B(0,\epsilon)\rightarrow [0,\infty) $ such that $ \xi_{a} (0)=1 $, $ \xi_{a}(v)(u-v)\in \mathcal{N}_{\lambda,a} $ and 
	\begin{equation*}
		\langle \xi_{a}'(0),v\rangle =\frac{p \overline{M}_{a}'(\|u\|^{p})\langle (-\Delta)^{s}u,v \rangle-\lambda \int_{\Omega}\partial_{t}\overline{f}(x,u)vdx-q\int_{\Omega}\frac{|u|^{q-2}uv}{|x|^{\alpha}}dx}{p\overline{M}_{a}(\|u\|^{p})\|u\|^{p}-\lambda \int_{\Omega}\partial_{t}\overline{f}(x,u)u dx-q\int_{\Omega}\frac{|u|^{q}}{|x|^{\alpha}}dx}
	\end{equation*} 
\end{lemma}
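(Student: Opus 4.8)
The plan is to mirror the proof of Lemma \ref{lem5} line by line, replacing the Kirchhoff inequality \eqref{R4} for $M$ by the corresponding truncated inequality \eqref{Equa63} for $M_a$ and replacing the property \eqref{propf1} (governed by $\gamma$) by the non-degenerate structural property $\partial_t\overline{f}(x,t)t-\mu f(x,t)t\geq 0$, where $\mu>p$ is the exponent from \hyperlink{F2}{$(F_2)$}. Concretely, for a fixed $u\in\mathcal{N}_{\lambda,a}$ I would define $G_u:\mathbb{R}\times W_0^{s,p}(\Omega)\to\mathbb{R}$ by $G_u(\xi,w)=\langle I'_{\lambda,a}(\xi(u-w)),\xi(u-w)\rangle$. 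Since $u\in\mathcal{N}_{\lambda,a}$, we have $G_u(1,0)=\langle I'_{\lambda,a}(u),u\rangle=0$, so $(1,0)$ is the base point at which the Implicit Function Theorem will be applied.

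The decisive step is to compute the partial derivative in the first variable at $(1,0)$ and to verify that it is strictly negative. A direct calculation gives
\[
\partial_{\xi}G_u(\xi,w)\big|_{(1,0)}=p\,\overline{M}_a'(\|u\|^{p})\|u\|^{p}-\lambda\int_{\Omega}\partial_t\overline{f}(x,u)u\,dx-q\int_{\Omega}\frac{|u|^{q}}{|x|^{\alpha}}\,dx.
\]
Using \eqref{Equa63} in the equivalent form $p\,\overline{M}_a'(t)t<\mu\,\overline{M}_a(t)$, together with the Nehari identity $M_a(\|u\|^{p})\|u\|^{p}=\lambda\int_{\Omega}f(x,u)u\,dx+\int_{\Omega}\frac{|u|^{q}}{|x|^{\alpha}}dx$, this quantity is bounded above by
\[
\lambda\int_{\Omega}\bigl(\mu f(x,u)u-\partial_t\overline{f}(x,u)u\bigr)dx+(\mu-q)\int_{\Omega}\frac{|u|^{q}}{|x|^{\alpha}}\,dx,
\]
which is nonpositive by the non-degenerate property $\partial_t\overline{f}(x,t)t-\mu f(x,t)t\geq 0$ and is in fact strictly negative because $\mu<q$ and the Hardy term is positive on $\mathcal{N}_{\lambda,a}$ (the latter guaranteed by item (1) of Lemma \ref{Lem21}).

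With $\partial_{\xi}G_u(1,0)\neq 0$ established, I would invoke the Implicit Function Theorem to produce $\epsilon>0$ and a differentiable map $\xi_a:B(0,\epsilon)\to[0,\infty)$ with $\xi_a(0)=1$ and $G_u(\xi_a(v),v)=0$ for all $v\in B(0,\epsilon)$, which is precisely the statement $\xi_a(v)(u-v)\in\mathcal{N}_{\lambda,a}$. The formula for $\langle\xi_a'(0),v\rangle$ then follows from the usual identity $\langle\xi_a'(0),v\rangle=-[\partial_{\xi}G_u(1,0)]^{-1}[\partial_wG_u(1,0)]v$, after computing $\partial_wG_u(1,0)$ explicitly. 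I expect the only genuine obstacle to be the sign verification of $\partial_{\xi}G_u(1,0)$ above; everything else is a routine transcription of Lemma \ref{lem5}, and indeed the authors note in this subsection that such proofs are omitted precisely because they are obtained by the substitutions \hyperlink{M2}{$(M_2)$}$\to$\hyperlink{M'2}{$(M'_2)$}, \hyperlink{1}{$(f_1)$}$\to$\hyperlink{F1}{$(F_1)$}, and \hyperlink{2}{$(f_2)$}, \hyperlink{3}{$(f_3)$}$\to$\hyperlink{F2}{$(F_2)$}.
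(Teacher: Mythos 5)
Your proposal is correct and coincides with the paper's intended argument: the authors omit this proof precisely because it is the transcription of Lemma \ref{lem5} under the substitutions \hyperlink{M2}{$(M_{2})$}$\to$\hyperlink{M'2}{$(M'_{2})$}, \hyperlink{1}{$(f_{1})$}$\to$\hyperlink{F1}{$(F_{1})$} and \hyperlink{2}{$(f_{2})$}, \hyperlink{3}{$(f_{3})$}$\to$\hyperlink{F2}{$(F_{2})$}, which is exactly what you carry out. In particular, your sign verification $\partial_{\xi}G_{u}(1,0)<0$ via the truncated inequality \eqref{Equa63} (equivalently $p\,\overline{M}_{a}'(t)t<\mu\,\overline{M}_{a}(t)$), the Nehari identity for $u\in\mathcal{N}_{\lambda,a}$, the property $\partial_{t}\overline{f}(x,t)t-\mu f(x,t)t\geq 0$, and $\mu<q$ mirrors the degenerate-case computation line by line.
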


\begin{lemma}
	For each $ w\in \mathcal{M}_{\lambda,a} $, we have that $ \det J_{(0,1)}\psi_{w,a}>0 $, where $ J_{(1,1)}\psi_{w,a} $ is the Jacobian matrix of $ \psi_{w,a} $ in pair $ (1,1) $.
\end{lemma}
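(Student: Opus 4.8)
The plan is to follow the proof of Lemma \ref{Lem6} line by line, substituting the truncated Kirchhoff function $M_a$ for $M$ and the exponent $\mu$ from \hyperlink{F2}{$(F_2)$} for $\gamma$. Writing $\Psi_{w,a}=(\Psi_{w,a}^1,\Psi_{w,a}^2)$ exactly as in \eqref{Equa35} but with $I_{\lambda,a}$ in place of $I_\lambda$, so that $\Psi_{w,a}^1(t,\theta)=\langle I'_{\lambda,a}(tw^++\theta w^-),tw^+\rangle$ and $\Psi_{w,a}^2(t,\theta)=\langle I'_{\lambda,a}(tw^++\theta w^-),\theta w^-\rangle$, I would first compute the four partial derivatives at $(1,1)$. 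These take the same form as in Lemma \ref{Lem6} with $M_a,M_a'$ replacing $M,M'$; in particular the off-diagonal entries agree,
\begin{equation*}
\frac{\partial \Psi_{w,a}^1}{\partial \theta}(1,1)=\frac{\partial \Psi_{w,a}^2}{\partial t}(1,1)=pM_a'(\|w\|^p)(A^+(w)+B^+(w))(A^-(w)+B^-(w))+M_a(\|w\|^p)C(w).
\end{equation*}
This common value is strictly positive: $M_a'\geq 0$ by \hyperlink{M1}{$(M_1)$}, the factors $A^\pm(w)+B^\pm(w)=\langle(-\Delta_p)^sw,w^\pm\rangle$ are positive by \eqref{Equa62} (since $w^\pm\neq 0$), and $M_a(\|w\|^p)\geq m_0>0$ by \eqref{Ma1} while $C(w)>0$ because $w$ is sign-changing.

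The decisive step is to estimate the diagonal entries using \eqref{Equa63}, the non-degenerate analogue of \eqref{R5}: from $pM_a'(\|w\|^p)\|w\|^p<(\mu-p)M_a(\|w\|^p)$ and $\|w\|^p=(A^+(w)+B^+(w))+(A^-(w)+B^-(w))$, I would bound the leading term $pM_a'(\|w\|^p)(A^+(w)+B^+(w))^2$ and reduce $\frac{\partial\Psi_{w,a}^1}{\partial t}(1,1)$ to
\begin{equation*}
\frac{\partial\Psi_{w,a}^1}{\partial t}(1,1)<\mu M_a(\|w\|^p)(A^+(w)+B^+(w))-\frac{\partial\Psi_{w,a}^1}{\partial\theta}(1,1)-\lambda\int_\Omega\partial_t\overline{f}(x,w^+)w^+\,dx-q\int_\Omega\frac{|w^+|^q}{|x|^\alpha}\,dx.
\end{equation*}
Since $w\in\mathcal{M}_{\lambda,a}$ forces $\langle I'_{\lambda,a}(w),w^+\rangle=0$, I substitute $\mu M_a(\|w\|^p)(A^+(w)+B^+(w))=\lambda\mu\int_\Omega f(x,w^+)w^+\,dx+\mu\int_\Omega\frac{|w^+|^q}{|x|^\alpha}\,dx$ and invoke the inequality $\partial_t\overline{f}(x,t)t-\mu f(x,t)t\geq 0$ furnished by \hyperlink{F2}{$(F_2)$} together with $\mu<q$; both resulting integral contributions are then nonpositive, yielding $\frac{\partial\Psi_{w,a}^1}{\partial t}(1,1)<-\frac{\partial\Psi_{w,a}^1}{\partial\theta}(1,1)$. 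The symmetric computation starting from $\langle I'_{\lambda,a}(w),w^-\rangle=0$ gives $\frac{\partial\Psi_{w,a}^2}{\partial\theta}(1,1)<-\frac{\partial\Psi_{w,a}^1}{\partial\theta}(1,1)$.

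Finally, since the common off-diagonal entry is positive and each diagonal entry lies strictly below its negative, both diagonal entries are negative and exceed the off-diagonal entry in absolute value, so
\begin{equation*}
\det J_{(1,1)}\Psi_{w,a}=\frac{\partial\Psi_{w,a}^1}{\partial t}(1,1)\frac{\partial\Psi_{w,a}^2}{\partial\theta}(1,1)-\left(\frac{\partial\Psi_{w,a}^1}{\partial\theta}(1,1)\right)^2>0,
\end{equation*}
exactly as in Lemma \ref{Lem6}. I expect no genuine difficulty here: the argument is structurally identical to the degenerate case, and the only point requiring care is that the truncation inherits the scaling inequality \eqref{Equa63}, which was already verified in Section \ref{Aux}. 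The lower bound $M_a\geq m_0>0$ from \eqref{Ma1} is precisely what lets \hyperlink{F2}{$(F_2)$} and the exponent $\mu$ take over the role played by \hyperlink{3}{$(f_3)$}, \hyperlink{2}{$(f_2)$} and $\gamma$ in the degenerate setting.
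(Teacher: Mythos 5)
Your proposal is correct and is essentially the paper's own argument: the paper omits this proof precisely because it follows from Lemma \ref{Lem6} under the substitutions $M \mapsto M_{a}$, \eqref{R5} $\mapsto$ \eqref{Equa63}, and $\gamma \mapsto \mu$ with $(F_{2})$ replacing $(f_{2})$ and $(f_{3})$, which is exactly the adaptation you carry out. If anything, your write-up is more explicit than the paper's (e.g.\ in justifying the positivity of the off-diagonal entries), but the route is the same.
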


\begin{proposition}\label{Prop21}
	The following asymptotic properties hold: 
	\begin{itemize}
		\item[(1)] For $ X_{\lambda,a}=\mathcal{N}_{\lambda,a},\mathcal{N}_{\lambda,a}^{+},\mathcal{N}_{\lambda,a}^{-},\mathcal{M}_{\lambda,a} $, it holds that $ c_{X_{\lambda,a}} $ is non-increasing in $ \lambda>0 $ and
		\begin{equation*}
		\lim_{\lambda\rightarrow \infty} c_{X_{\lambda,a}} =0.
		\end{equation*}
		\item[(2)] Let $ \{q_{n}\}\subset (p,p_{\alpha}^{*}] $ be such that $ q_{n}\rightarrow p_{\alpha}^{*} $ as $ n\rightarrow \infty $. Then,
		\begin{equation*}
		\lim_{\lambda\rightarrow \infty} \limsup_{n\rightarrow \infty}c_{\mathcal{M}_{\lambda,a,q_{n}}}=0. 
		\end{equation*}
	\end{itemize}
\end{proposition}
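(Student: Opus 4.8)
The plan is to transcribe the proof of Proposition~\ref{Prop1} line by line, the only structural change being that the polynomial control of $M$ furnished by \hyperlink{M2}{$(M_{2})$} is everywhere replaced by the two-sided bound \eqref{Ma1}, i.e. $m_{0}\le M_{a}(t)\le a$ (so that $\widehat{M}_{a}(r)=\int_{0}^{r}M_{a}\le ar$), and the existence/uniqueness inputs are taken from the non-degenerate analogues of Lemma~\ref{lem3} and of Lemma~\ref{Lem4} (namely Lemma~\ref{Lem21}) already stated in this subsection. For the monotonicity in part~(1), I would fix $\lambda_{2}>\lambda_{1}$; given $w\in\mathcal{M}_{\lambda_{1},a}$, the non-degenerate analogue of Lemma~\ref{lem3} yields $(t,\theta)$ with $tw^{+}+\theta w^{-}\in\mathcal{M}_{\lambda_{2},a}$, and writing $\psi_{w,a}^{\lambda}(t,\theta)=I_{\lambda,a}(tw^{+}+\theta w^{-})$ together with $F\ge0$ from \eqref{propf3} gives
\[
I_{\lambda_{1},a}(w)=\psi_{w,a}^{\lambda_{1}}(1,1)\ge\psi_{w,a}^{\lambda_{1}}(t,\theta)=\psi_{w,a}^{\lambda_{2}}(t,\theta)+(\lambda_{2}-\lambda_{1})\!\int_{\Omega}\!F(x,tw^{+}+\theta w^{-})\,dx>\psi_{w,a}^{\lambda_{2}}(t,\theta)\ge c_{\mathcal{M}_{\lambda_{2},a}},
\]
so $c_{\mathcal{M}_{\lambda_{1},a}}\ge c_{\mathcal{M}_{\lambda_{2},a}}$; the identical argument with $\varphi_{u,a}$ in place of $\psi_{w,a}$ covers the three Nehari sets.

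For the limit $c_{X_{\lambda,a}}\to0$, I would fix $w$ with $w^{\pm}\neq0$ and, for each $\lambda$, pick $(t_{\lambda},\theta_{\lambda})$ with $t_{\lambda}w^{+}+\theta_{\lambda}w^{-}\in\mathcal{M}_{\lambda,a}$. From $\widehat{M}_{a}(r)\le ar$ one has $0<c_{X_{\lambda,a}}\le c_{\mathcal{M}_{\lambda,a}}\le I_{\lambda,a}(t_{\lambda}w^{+}+\theta_{\lambda}w^{-})\le\frac{a}{p}\|t_{\lambda}w^{+}+\theta_{\lambda}w^{-}\|^{p}$, so it suffices to show $(t_{\lambda},\theta_{\lambda})\to(0,0)$. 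The Nehari identity, together with $M_{a}\le a$ and $\overline{f}>0$ from condition $(f_{0})$, gives
\[
t_{\lambda}^{q}\!\int_{\Omega}\!\frac{|w^{+}|^{q}}{|x|^{\alpha}}\,dx+\theta_{\lambda}^{q}\!\int_{\Omega}\!\frac{|w^{-}|^{q}}{|x|^{\alpha}}\,dx\le a\,2^{p-1}\bigl(t_{\lambda}^{p}\|w^{+}\|^{p}+\theta_{\lambda}^{p}\|w^{-}\|^{p}\bigr),
\]
and since $q>p$ the set $Q_{w}$ of all such pairs is bounded. Along any $\lambda_{n}\to\infty$ I may assume $(t_{\lambda_{n}},\theta_{\lambda_{n}})\to(\bar t,\bar\theta)$; if $(\bar t,\bar\theta)\neq(0,0)$, then by Lemma~\ref{Lemm1} the quantity $\lambda_{n}\int_{\Omega}f(x,t_{\lambda_{n}}w^{+}+\theta_{\lambda_{n}}w^{-})(t_{\lambda_{n}}w^{+}+\theta_{\lambda_{n}}w^{-})\,dx\to+\infty$ (the integral converging to a strictly positive limit by $(f_{0})$), which contradicts the boundedness of the left side of the Nehari identity, because $M_{a}$ and the norms remain bounded. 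Hence $(\bar t,\bar\theta)=(0,0)$ and $c_{X_{\lambda,a}}\to0$.

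Part~(2) mirrors part~(2) of Proposition~\ref{Prop1}. For fixed $w$ with $w^{\pm}\neq0$ I would take $(t_{\lambda,q_{n}},\theta_{\lambda,q_{n}})$ and $(t_{\lambda,p_{\alpha}^{*}},\theta_{\lambda,p_{\alpha}^{*}})$ realizing the projections onto $\mathcal{M}_{\lambda,a,q_{n}}$ and $\mathcal{M}_{\lambda,a,p_{\alpha}^{*}}$. Using $\widehat{M}_{a}(r)\le ar$ and the lower bound $F(x,t)\ge C_{1}(x)|t|^{\mu}-C_{2}(x)$ (with $\mu>p$) valid under $(F_{1})$--$(F_{2})$, one checks that $\psi_{w,a}^{q_{n}}(t,\theta)\to-\infty$ as $|(t,\theta)|\to\infty$ uniformly in $n$, which confines every maximizer to one fixed rectangle $(0,\tilde t\,]\times(0,\tilde\theta\,]$. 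Since $\psi_{w,a}^{q_{n}}\to\psi_{w,a}^{p_{\alpha}^{*}}$ locally uniformly, the maximizers converge, and uniqueness from the non-degenerate analogue of Lemma~\ref{lem3} identifies the limit as $(t_{\lambda,p_{\alpha}^{*}},\theta_{\lambda,p_{\alpha}^{*}})$. Therefore $\limsup_{n}c_{\mathcal{M}_{\lambda,a,q_{n}}}\le\psi_{w,a}^{p_{\alpha}^{*}}(t_{\lambda,p_{\alpha}^{*}},\theta_{\lambda,p_{\alpha}^{*}})$, and letting $\lambda\to\infty$ the right-hand side tends to $0$ by the part~(1) argument applied with $q=p_{\alpha}^{*}$ (the boundedness of $Q_{w}$ above used only $q>p$).

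The step I expect to be the main obstacle is the uniformity in $n$ in part~(2): securing that $\psi_{w,a}^{q_{n}}\to-\infty$ at a rate independent of $n$, so that all the maximizers lie in a single compact rectangle, and only then invoking local uniform convergence plus the uniqueness of the projection to pin down the limit. Once the replacement $m_{0}\le M_{a}\le a$ is made in place of the growth of $M$, the remaining estimates are a direct transcription of the degenerate case.
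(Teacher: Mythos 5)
Your proposal is correct and is essentially the paper's own argument: the paper omits the proof of Proposition \ref{Prop21}, stating only that it follows from the proof of Proposition \ref{Prop1} by replacing $(M_{2})$ with the bound $m_{0}\leq M_{a}(t)\leq a$ from \eqref{Ma1}, $(f_{1})$ by $(F_{1})$, and $(f_{2})$--$(f_{3})$ by $(F_{2})$, and your text is precisely that transcription, with the existence/uniqueness and lower-bound inputs drawn from the stated non-degenerate analogues (Lemma \ref{Lem21} and the analogue of Lemma \ref{lem3}). Your treatment of the two points where the hypotheses actually enter — boundedness of $Q_{w}$ using only $q>p$ together with $M_{a}\leq a$, and the uniform-in-$n$ coercivity of $\psi_{w,a}^{q_{n}}$ via $F(x,t)\geq C_{1}(x)|t|^{\mu}-C_{2}(x)$ with $\mu>p$ — is exactly the adaptation the paper intends.
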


\begin{lemma}
	Let $ \{u_{n}\} $ and $ \{q_{n}\} $ be a sequences satisfying the hypotheses of Lemma \ref{lem4} such that $ I'_{\lambda,a,q_{n}}(u_{n})\rightarrow 0 $, as $ n\rightarrow \infty $. With the previous notations, if $ \nu_{j}>0 $ for some $ j\in \mathcal{J} $ or $  \nu _{k}^{\pm}>0 $ for some $ k\in \mathcal{J}^{\pm} $, we have
	\begin{equation*}
	\nu_{j}, \nu _{k}^{\pm}\geq (m_{0}S_{\alpha})^{\frac{N-\alpha}{sp-\alpha}}.
	\end{equation*}
\end{lemma}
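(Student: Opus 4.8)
The plan is to reproduce the argument of Lemma \ref{lem6} in its structure, the only—and decisive—change being that the degenerate estimate \eqref{R3} for $M$ is replaced by the uniform lower bound $M_a(t)\geq m_0>0$ furnished by \eqref{Ma1}. Fix an index $k\in\mathcal{J}^{\pm}$ (or $j\in\mathcal{J}$) with $\nu_k^{\pm}>0$; by the last assertion of Lemma \ref{lem4}, when $\alpha>0$ the only possible concentration point is $x=0$. For $\varrho>0$ I would choose a cutoff $\phi_\varrho\in C_c^\infty(B(x_k^{\pm},2\varrho))$ with $0\leq\phi_\varrho\leq 1$, $\phi_\varrho\equiv 1$ on $B(x_k^{\pm},\varrho)$ and $|\nabla\phi_\varrho|\leq C/\varrho$, exactly as in the degenerate case.

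Since $\{\phi_\varrho u_n^{\pm}\}$ is bounded in $W_0^{s,p}(\Omega)$ and $I'_{\lambda,a,q_n}(u_n)\to 0$, testing gives $\langle I'_{\lambda,a,q_n}(u_n),\phi_\varrho u_n^{\pm}\rangle=o_n(1)$, that is
\begin{equation*}
\int_\Omega\frac{|u_n^{\pm}|^{q_n}}{|x|^{\alpha}}\phi_\varrho\,dx+\lambda\int_\Omega f(x,u_n^{\pm})u_n^{\pm}\phi_\varrho\,dx=M_a(\|u_n\|^p)\langle(-\Delta_p)^s u_n,\phi_\varrho u_n^{\pm}\rangle+o_n(1).
\end{equation*}
I would split the nonlocal pairing into the diagonal part $\int_{\mathbb{R}^N}|D^s u_n^{\pm}|^p\phi_\varrho\,dx$ and a commutator remainder, controlling the latter through \cite[Lemma 3.1]{Mo} and \cite[(2.14)]{Mo}, whose limit as $\varrho\to 0$ vanishes. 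Writing $\|u_n^{\pm}\|\to\tau_\pm$ and passing to the limit first in $n$ (via \eqref{Eq1}, \eqref{Eq3} and the compactness of the lower-order term) and then in $\varrho\to 0$, I obtain $\nu_k^{\pm}\geq M_a(\tau_\pm^p)\sigma_k^{\pm}$, where $\tau_\pm^p\geq\sigma_k^{\pm}$ by \eqref{Eq2}.

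At this point the non-degenerate structure simplifies everything: instead of invoking \eqref{R3}, I simply use $M_a(\tau_\pm^p)\geq m_0$ from \eqref{Ma1}, so that $\nu_k^{\pm}\geq m_0\,\sigma_k^{\pm}$, with no $\min\{\cdot,\cdot\}$ appearing. Combining with \eqref{Eq4}, namely $\sigma_k^{\pm}\geq S_\alpha(\nu_k^{\pm})^{p/p_\alpha^*}$, yields $\nu_k^{\pm}\geq m_0 S_\alpha(\nu_k^{\pm})^{p/p_\alpha^*}$; since $\nu_k^{\pm}>0$ I may divide and use the identity $1-p/p_\alpha^*=(sp-\alpha)/(N-\alpha)$ to conclude $\nu_k^{\pm}\geq(m_0 S_\alpha)^{(N-\alpha)/(sp-\alpha)}$. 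The identical computation with $\sigma_j,\nu_j$ in place of $\sigma_k^{\pm},\nu_k^{\pm}$ settles the index $j\in\mathcal{J}$.

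The only genuinely delicate step—the control of the nonlocal commutator term—is not new: it is precisely the estimate already carried out in Lemma \ref{lem6} and borrowed from \cite{Mo}, so here it requires no modification, and the upper bound $M_a\leq a$ from \eqref{Ma1} keeps every constant uniform in $n$. Thus the substance of the proof is the passage to the limit, while the final bound is cleaner than in the degenerate case precisely because $M_a$ stays bounded away from $0$.
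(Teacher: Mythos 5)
Your proposal is correct and is essentially the paper's own proof: the paper omits the argument, stating that it follows the proof of Lemma \ref{lem6} verbatim once the degenerate bound \eqref{R3} is replaced by $m_{0}\leq M_{a}(t)\leq a$ from \eqref{Ma1}, which is exactly the substitution you carry out, including the correct exponent identity $1-p/p_{\alpha}^{*}=(sp-\alpha)/(N-\alpha)$.
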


\begin{proposition}[PS condition]\label{Prop22}
	The following statements are hold true:
	
	\noindent (1) for $ p<q<p_{\alpha}^{*} $, $ I_{\lambda,a} $ satisfies $ (PS)_{c} $, for all $ c\in \mathbb{R} $;
	
	\noindent (2) for $ q=p_{\alpha}^{*} $, $ I_{\lambda,a} $ satisfies $ (PS)_{c} $, for all
	\begin{equation*}
	c<\left (\frac{1}{\mu}-\frac{1}{p_{\alpha}^{*}}\right )(m_{0}S_{\alpha})^{\frac{N-\alpha}{sp-\alpha}}.
	\end{equation*}
\end{proposition}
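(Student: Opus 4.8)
The plan is to transcribe the argument of Proposition \ref{Prop3}, replacing the structural inputs \hyperlink{M2}{$(M_{2})$}, \hyperlink{1}{$(f_{1})$}, \hyperlink{2}{$(f_{2})$}, \hyperlink{3}{$(f_{3})$} by their non-degenerate/truncated counterparts \hyperlink{M'2}{$(M'_{2})$}, \hyperlink{F1}{$(F_{1})$} and \hyperlink{F2}{$(F_{2})$}, and to exploit the two-sided bound $m_{0}\le M_{a}(t)\le a<\frac{1}{p}m_{0}\mu$ from \eqref{Ma1}. Throughout, let $\{u_{n}\}$ be a $(PS)_{c}$ sequence for $I_{\lambda,a}$. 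First I would prove boundedness. Subtracting $\frac{1}{\mu}\langle I'_{\lambda,a}(u_{n}),u_{n}\rangle$ from $I_{\lambda,a}(u_{n})$ and using that $\frac{1}{p}\widehat{M}_{a}(t)-\frac{1}{\mu}M_{a}(t)t$ is positive by \eqref{M'a2}, that $\frac{1}{\mu}f(x,t)t-F(x,t)\ge 0$ as a consequence of \hyperlink{F2}{$(F_{2})$}, and that $\frac{1}{\mu}-\frac{1}{q}\ge 0$ since $\mu<q$, I obtain
\[
c+o_{n}(1)\|u_{n}\|\;\ge\;\frac{1}{p}\widehat{M}_{a}(\|u_{n}\|^{p})-\frac{1}{\mu}M_{a}(\|u_{n}\|^{p})\|u_{n}\|^{p}\;\ge\;\left(\frac{m_{0}}{p}-\frac{a}{\mu}\right)\|u_{n}\|^{p},
\]
where the last step uses \eqref{Ma1}. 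Since $a<\frac{1}{p}m_{0}\mu$ gives $\frac{m_{0}}{p}-\frac{a}{\mu}>0$ and $p>1$, the sequence $\{u_{n}\}$ is bounded. Note that, unlike in the degenerate case, no dichotomy on $\inf_{n}\|u_{n}\|$ is needed here, because $M_{a}\ge m_{0}>0$ everywhere.

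In the subcritical case $p<q<p_{\alpha}^{*}$ of item (1), up to a subsequence one has $u_{n}\rightharpoonup u$ in $W_{0}^{s,p}(\Omega)$, $u_{n}\to u$ a.e.\ in $\Omega$ and $\|u_{n}\|^{p}\to\tau\ge 0$. Since the embedding into $L^{r}(\Omega,dx/|x|^{\beta})$ is compact for $r<p_{\beta}^{*}$, Lemma \ref{Lemm1} (with \hyperlink{F1}{$(F_{1})$}) and Lemma \ref{lem2} yield
\[
\int_{\Omega}f(x,u_{n})(u_{n}-u)\,dx\to 0\qquad\text{and}\qquad\int_{\Omega}\frac{|u_{n}|^{q-2}u_{n}(u_{n}-u)}{|x|^{\alpha}}\,dx\to 0.
\]
Combining these with $\langle I'_{\lambda,a}(u_{n}),u_{n}-u\rangle=o_{n}(1)$ leaves $M_{a}(\tau)\langle(-\Delta_{p})^{s}u_{n},u_{n}-u\rangle=o_{n}(1)$; dividing by $M_{a}(\tau)\ge m_{0}>0$ gives $\langle(-\Delta_{p})^{s}u_{n},u_{n}-u\rangle\to 0$. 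By Lemma \ref{lem1} this forces $\|u_{n}\|\to\|u\|$, and the uniform convexity of $W_{0}^{s,p}(\Omega)$ then yields $u_{n}\to u$ strongly, proving (1).

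For the critical case $q=p_{\alpha}^{*}$ in item (2), I keep the same bounded sequence and weak limit and apply the concentration-compactness principle (Lemma \ref{lem4}) with $q_{n}\equiv p_{\alpha}^{*}$, producing measures $\nu,\sigma$ and an at most countable atom set $\{x_{j}\}_{j\in\mathcal{J}}$. Assuming for contradiction that $\nu_{j}>0$ for some $j$, the truncated analogue of Lemma \ref{lem6} (stated immediately above, giving $\nu_{j}\ge(m_{0}S_{\alpha})^{(N-\alpha)/(sp-\alpha)}$) applies. Subtracting $\frac{1}{\mu}\langle I'_{\lambda,a}(u_{n}),u_{n}\rangle$ and discarding the nonnegative Kirchhoff and nonlinearity remainders (via \eqref{M'a2} and \hyperlink{F2}{$(F_{2})$}) leads to
\[
c\;\ge\;\left(\frac{1}{\mu}-\frac{1}{p_{\alpha}^{*}}\right)\int_{\Omega}\frac{|u_{n}|^{p_{\alpha}^{*}}}{|x|^{\alpha}}\,dx+o_{n}(1)\;\ge\;\left(\frac{1}{\mu}-\frac{1}{p_{\alpha}^{*}}\right)\nu_{j}\;\ge\;\left(\frac{1}{\mu}-\frac{1}{p_{\alpha}^{*}}\right)(m_{0}S_{\alpha})^{\frac{N-\alpha}{sp-\alpha}},
\]
contradicting the assumed bound on $c$. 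Hence $\mathcal{J}=\emptyset$, so $u_{n}\to u$ in $L^{p_{\alpha}^{*}}(\Omega,dx/|x|^{\alpha})$, and strong convergence in $W_{0}^{s,p}(\Omega)$ follows exactly as in the subcritical step.

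The main obstacle is the critical case, and it is essentially a matter of choosing the correct multiplier: one must use $\frac{1}{\mu}$ (rather than $\frac{1}{\gamma}$ as in Proposition \ref{Prop3}) so that both the Kirchhoff remainder $\frac{1}{p}\widehat{M}_{a}(t)-\frac{1}{\mu}M_{a}(t)t$ and the nonlinearity remainder $\frac{1}{\mu}f(x,\cdot)\cdot-F(x,\cdot)$ are simultaneously nonnegative, which is precisely what \eqref{M'a2} and \hyperlink{F2}{$(F_{2})$} guarantee and what fixes the sharp threshold $\big(\frac{1}{\mu}-\frac{1}{p_{\alpha}^{*}}\big)(m_{0}S_{\alpha})^{(N-\alpha)/(sp-\alpha)}$. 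The lower bound on the atomic masses is the technical heart, but it is already supplied by the truncated version of Lemma \ref{lem6}; the remainder is a faithful—and in fact slightly simpler—transcription of Proposition \ref{Prop3}, the simplification being that the non-degeneracy $M_{a}\ge m_{0}>0$ eliminates the $\inf_{n}\|u_{n}\|=0$ dichotomy.
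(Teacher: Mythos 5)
Your proposal is correct and follows exactly the route the paper intends: the paper omits the proof of Proposition \ref{Prop22}, stating that one repeats Proposition \ref{Prop3} with $(M_{2})$, $(f_{1})$, $(f_{2})$, $(f_{3})$ replaced by $(M'_{2})$, $(F_{1})$, $(F_{2})$ and with the two-sided bound $m_{0}\le M_{a}\le a<\tfrac{1}{p}m_{0}\mu$, which is precisely your transcription, including the correct multiplier $\tfrac{1}{\mu}$ that produces the threshold $\bigl(\tfrac{1}{\mu}-\tfrac{1}{p_{\alpha}^{*}}\bigr)(m_{0}S_{\alpha})^{\frac{N-\alpha}{sp-\alpha}}$. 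Your observation that the non-degeneracy $M_{a}\ge m_{0}>0$ removes the $\inf_{n}\|u_{n}\|$ dichotomy of the degenerate case is also accurate.
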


\begin{lemma}
	There exists a closed set $ V_{a}^{\pm} $ with $ \mathcal{N}_{\lambda,a}^{\pm}\subset V_{a}^{\pm}\subset \mathcal{N}_{\lambda,a} $ and sequences $ \{v_{n}\}\subset V_{a}^{\pm} $, $ \{u_{n}\}\subset \mathcal{N}_{\lambda,a}^{\pm} $ such that $ \{v_{n}\} $ is a $ (PS)_{c_{V^{\pm}}} $ sequence and
	\begin{equation*}
	\lim_{n\rightarrow \infty}\|u_{n}-v_{n}\|=0.
	\end{equation*}
\end{lemma}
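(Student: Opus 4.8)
The plan is to replicate, essentially verbatim, the argument of Lemma \ref{Lem7}, working with the truncated functional $I_{\lambda,a}$ and its Nehari sets in place of $I_{\lambda}$, and substituting the structural inequalities of the degenerate setting by their truncated/non-degenerate counterparts: the growth $(F_1)$ in place of $(f_1)$, the monotonicity $(F_2)$ in place of $(f_2)$--$(f_3)$, and the truncation bound \eqref{Equa63} (hence \eqref{M'a2}) in place of \eqref{R4}--\eqref{R5}, together with the uniform bounds $m_0\leq M_a\leq a$ from \eqref{Ma1}. I would carry out the proof only for $\mathcal{N}_{\lambda,a}^{+}$, the case of $\mathcal{N}_{\lambda,a}^{-}$ being symmetric, and I would write $c_{V_a^{+}}:=\inf_{v\in V_a^{+}}I_{\lambda,a}(v)$.

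First I would fix $R>0$ from the non-degenerate geometric lemma (which now reads $I_{\lambda,a}(u)\geq R\|u\|^{p}$ and $\langle I'_{\lambda,a}(u),u\rangle\geq R\|u\|^{p}$ whenever $\|u\|\leq R$) and set
\[
U_a^{+}:=\{u\in W_0^{s,p}(\Omega):\|u^{-}\|<R\},\qquad V_a^{+}:=\mathcal{N}_{\lambda,a}\cap\overline{U_a^{+}},
\]
so that $V_a^{+}$ is closed and $\mathcal{N}_{\lambda,a}^{+}\subset V_a^{+}\subset\mathcal{N}_{\lambda,a}$. Taking a minimizing sequence $\{\tilde u_n\}\subset V_a^{+}$ for $I_{\lambda,a}$, the non-degenerate splitting inequalities (the analogues of \eqref{Equa55}) yield $\langle I'_{\lambda,a}(\tilde u_n^{+}),\tilde u_n^{+}\rangle\leq\langle I'_{\lambda,a}(\tilde u_n),\tilde u_n\rangle=0$, so the corresponding corollary gives $t_n\in(0,1]$ with $u_n:=t_n\tilde u_n^{+}\in\mathcal{N}_{\lambda,a}^{+}$ and $I_{\lambda,a}(u_n)\leq I_{\lambda,a}(\tilde u_n)$; hence $\{u_n\}\subset\mathcal{N}_{\lambda,a}^{+}$ is again minimizing on $V_a^{+}$. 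Ekeland's variational principle then produces $\{v_n\}\subset V_a^{+}$ satisfying the estimates \eqref{Equa43}--\eqref{Equa44} with $I_{\lambda,a}$ in place of $I_{\lambda}$, in particular $\|u_n-v_n\|\to 0$, which is the asserted relation.

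Next I would verify that $v_n$ is an interior point, i.e.\ $v_n\in U_a^{+}$ for $n$ large. The corresponding corollary furnishes $\theta_n\in(0,1]$ with $\theta_n v_n^{+}\in\mathcal{N}_{\lambda,a}^{+}$; boundedness of $\{v_n\}$ together with Lemma \ref{Lem21}(1) forces $\theta_n^{p}\geq\kappa^{p}/C$ away from zero, and if infinitely many $v_n$ satisfied $\|v_n^{-}\|=R$ the geometric bound would give $I_{\lambda,a}(\theta_n v_n^{-})\geq R\,\theta_n^{p}\|v_n^{-}\|^{p}\geq R^{p+1}\kappa^{p}/C>1/n$, which through $I_{\lambda,a}(v_n)\geq I_{\lambda,a}(\theta_n v_n^{+})+I_{\lambda,a}(\theta_n v_n^{-})\geq c_{V_a^{+}}+\tfrac1n$ contradicts \eqref{Equa43}. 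With $v_n$ interior I would invoke the non-degenerate Tarantello lemma to obtain differentiable maps $\xi_{a,n}:B(0,\epsilon_n)\to[0,\infty)$ with $\xi_{a,n}(0)=1$ and $\xi_{a,n}(v)(v_n-v)\in\mathcal{N}_{\lambda,a}$, and run the standard deformation computation: testing \eqref{Equa44} along $\eta_\rho:=\xi_{a,n}(v_\rho)(v_n-v_\rho)$ with $v_\rho=\rho v/\|v\|$ and letting $\rho\to0$ gives $\langle I'_{\lambda,a}(v_n),v/\|v\|\rangle\leq C n^{-1/2}(1+\|\xi_{a,n}'(0)\|)$.

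The main obstacle, exactly as in Lemma \ref{Lem7}, is to deduce $I'_{\lambda,a}(v_n)\to0$, for which it suffices to bound $\{\xi_{a,n}'(0)\}$ uniformly in $n$. Writing $G_a(u):=\langle I'_{\lambda,a}(u),u\rangle$, the numerator of $\xi_{a,n}'(0)$ is controlled on bounded sets since $G_a'$ maps bounded sets into bounded sets, so the crux is that the denominator $\langle G_a'(v_n),v_n\rangle$ stays bounded away from zero. Here \eqref{Equa63} yields $p\overline{M}_a'(t)t<\mu\overline{M}_a(t)$ (replacing \eqref{R4}) and $(F_2)$ gives $\partial_t\overline{f}(x,t)t-\mu f(x,t)t\geq0$ (replacing \eqref{propf1}); combined with $v_n\in\mathcal{N}_{\lambda,a}$ these produce
\[
\langle G_a'(v_n),v_n\rangle\leq(\mu-q)\int_\Omega\frac{|v_n|^{q}}{|x|^{\alpha}}\,dx<0,
\]
since $\mu<q$. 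Lemma \ref{Lem21}(3) then supplies a uniform lower bound $|\langle G_a'(v_n),v_n\rangle|\geq d>0$, so $\{\xi_{a,n}'(0)\}$ is uniformly bounded and $\{v_n\}$ is a $(PS)_{c_{V_a^{+}}}$ sequence, which completes the proof.
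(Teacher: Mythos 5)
Your proof is correct and takes essentially the same approach as the paper: the paper itself omits this proof, stating that one repeats the argument of Lemma \ref{Lem7} after replacing $(M_{2})$ by $(M'_{2})$, $(f_{1})$ by $(F_{1})$, and $(f_{2})$--$(f_{3})$ by $(F_{2})$, using $m_{0}\leq M_{a}(t)\leq a$. Your substitutions, in particular the exponent $p$ in place of $\gamma$ in the geometric estimates and the denominator bound $\langle G_{a}'(v_{n}),v_{n}\rangle\leq(\mu-q)\int_{\Omega}|v_{n}|^{q}|x|^{-\alpha}\,dx<0$ deduced from \eqref{Equa63} and $(F_{2})$ together with Lemma \ref{Lem21}, are exactly the ones the paper prescribes.
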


\section{Proof of main results}\label{Section3}

We consider the levels
\begin{equation*}
l_{1}:=\left (\frac{1}{\gamma}-\frac{1}{p_{\alpha}^{*}}\right )\min\left \{\left (M(1)S_{\alpha}\right )^{\frac{N-\alpha}{sp-\alpha}},\left (M(1)S_{\alpha}^{\frac{\gamma}{p}}\right )^{\frac{p_{\alpha}^{*}}{p_{\alpha}^{*}-\gamma}}\right \},\;\; l_{2}:=\left (\frac{1}{\mu}-\frac{1}{p_{\alpha}^{*}}\right )(m_{0}S_{\alpha})^{\frac{N-\alpha}{sp-\alpha}}
\end{equation*}
which are important in demonstrations of main results of this work.

The existence of ground state solution can be ensured by Mountain Pass Theorem (see \cite{AM}). In fact, let
\begin{equation*}
\Gamma_{\lambda}:=\{g\in C^{o}([0,1],W_{0}^{s,p}(\Omega)):g(0)=0, \,I_{\lambda}(g(1))<0\},
\end{equation*}
\begin{equation*}
\Gamma _{\lambda,a} :=\{g\in C^{o}([0,1],W_{0}^{s,p}(\Omega)):g(0)=0, \,I_{\lambda,a}(g(1))<0\}
\end{equation*}
and the minimax levels 
\begin{equation*}
c_{\lambda}:=\inf_{g\in \Gamma _{\lambda}}\sup_{t\in [0,1]}I_{\lambda}(g(t)),
\end{equation*}
\begin{equation*}
c_{\lambda,a}:=\inf_{g\in \Gamma _{\lambda,a}}\sup_{t\in [0,1]}I_{\lambda,a}(g(t)). 
\end{equation*}
Using Lemma \ref{Lem1.1} to $ I_{\lambda} $ and Lemma\ref{Lem21} to $ I_{\lambda,a} $, we have that both functionals $ I_{\lambda}$ and $ I_{\lambda,a} $ satisfy the geometric conditions of Mountain Pass Theorem:
\begin{itemize}
\item[$ \bullet $] there exist positive constants  $ d $ and $ d' $ such that $ J(u)\geq d $ for all $ u\in W_{0}^{s,p}(\Omega) $, with $ \|u\|=d' $;
\item[$ \bullet $] there exists $ e\in W_{0}^{s,p}(\Omega) $ with $ \|e\|>d' $ such that $ J(e)<0 $.
\end{itemize}

In the subcritical case, $ p<q<p_{\alpha}^{*} $, since both functionals $ I_{\lambda} $ and $ I_{\lambda,a} $ satisfy the $ (PS)_{c} $ condition, for all $ c\in \mathbb{R} $, we can apply Mountain Pass Theorem to conclude that $ c_{\lambda} $ and $ c_{\lambda,a} $ are positive critical values of $ I_{\lambda} $ and $ I_{\lambda,a} $, respectively.

In critical case, $ q=p_{\alpha}^{*} $, we apply the version of Mountain Pass Theorem without the (PS) condition (see \cite[Theorem 2.2]{Bre}) for $ I_{\lambda} $ and $ I_{\lambda,a} $ which together with item (3) of Proposition \ref{Prop3} and item (2) of Proposition \ref{Prop22}, imply that $ c_{\lambda} $ and $ c_{\lambda,a} $ are positive critical values of $ I_{\lambda} $ and $ I_{\lambda,a} $, respectively, whenever $ c_{\lambda}< l_{1}$ and $ c_{\lambda,a}<l_{2} $.

It is not difficult to see that $ c_{\lambda}=c_{\mathcal{N}_{\lambda}} $ and  $ c_{\lambda,a}=c_{\mathcal{N}_{\lambda,a}} $ for all $ \lambda>0 $. This ensures that any solution obtained in the minimax level is also a ground state solution.

\subsection{Proof of Theorem \ref{Theo11}}

\noindent \textbf{(1)} \emph{Signed solutions:} In order to obtain a positive solution and a negative one for \eqref{Equa1}, we will show that $ c_{\mathcal{N}_{\lambda}^{+}} $ and $ c_{\mathcal{N}_{\lambda}^{-}} $ are achieved by critical points of $ I_{\lambda} $. In fact, let $ \{v_{n}\}\subset V^{+} $ and $ \{u_{n}\}\subset \mathcal{N}_{\lambda}^{+} $ be obtained in Lemma \ref{Lem7}. By Proposition \ref{Prop3}, there exists $ u_{\lambda,1}\in W_{0}^{s,p}(\Omega) $ such that, up to subsequence, $ v_{n}\rightarrow u_{\lambda,1} $ as $ n\rightarrow \infty $ which implies $ I_{\lambda}(u_{\lambda,1})=c_{V^{+}} $ and $ I'_{\lambda}(u_{\lambda,1})=0 $. From \eqref{Equa64}, $ u_{n}\rightarrow u_{\lambda,1} $ as $ n\rightarrow \infty $. Since $ \mathcal{N}_{\lambda}^{+} $ is closed, it follows that $ u_{\lambda,1} \in \mathcal{N}_{\lambda}^{+} $ and, thus, $ I_{\lambda}(u_{\lambda,1})=c_{\mathcal{N}_{\lambda}^{+}} $. Analogously, using $ \mathcal{N}_{\lambda}^{-} $, it is shown that there exists a negative solution  $ u_{\lambda,2} $ for problem \eqref{Equa1} with $ I_{\lambda}(u_{\lambda,2})=c_{\mathcal{N}_{\lambda}^{-}} $.

It remains to show that at least one of solutions $ u_{\lambda,1} $ or $ u_{\lambda,2} $ is a ground state solution. In fact, let $ u_{0} $ be a arbitrary ground state solution of \eqref{Equa1}. Then, $ u_{0}\in\mathcal{N}_{\lambda} $ with $ I_{\lambda}(u_{0})=c_{\mathcal{N}_{\lambda}} $ and $ I'_{\lambda}(u_{0})=0 $. We can observe that $ u_{0} $ has constant sign. Otherwise, $ u_{0}\in \mathcal{M}_{\lambda} $ and by Remark \ref{Rem1}, it follows that
\begin{equation*}
c_{\mathcal{N}_{\lambda}}=I_{\lambda}(u_{0})\geq c_{\mathcal{M}_{\lambda}}\geq 2 c_{\mathcal{N}_{\lambda}},
\end{equation*}
which is a contradiction. Therefore, if $ u_{0} $ is a positive solution, we have
\begin{equation*}
c_{\mathcal{N}_{\lambda}^{+}}\leq I_{\lambda}(u_{0})=c_{\mathcal{N}_{\lambda}}\leq c_{\mathcal{N}_{\lambda}^{+}},
\end{equation*}
which implies that $ u_{\lambda,1} $ is a ground state solution. If $ u_{0} $ is a negative solution, we obtain that $ u_{\lambda,2} $ is a ground state solution.

\noindent \emph{Sign-changing solution:} Let $ \{w_{n}\}\subset \mathcal{M}_{\lambda} $ be a sequence such that 
\begin{equation*}
\lim _{n\rightarrow \infty} I_{\lambda}(w_{n})=c_{\mathcal{M}_{\lambda}}.
\end{equation*}
From Lemma \ref{Lem4} and the boundedness of $ \{I_{\lambda}(w_{n})\} $, it follows that $ \{w_{n}\} $ is a bounded sequence in $ W_{0}^{s,p}(\Omega) $. Then, up to a subsequence, still denoted by $ \{w_{n}\} $, there exists $ w\in W_{0}^{s,p}(\Omega) $ such that $ w_{n}\rightharpoonup w $ and since the embedding $ W_{0}^{s,p}(\Omega) \hookrightarrow L^{r}(\Omega,\frac{dx}{|x|^{\beta}})$ is compact, for all $ r\in [1,p_{\beta}^{*}) $ and all $ 0\leq\beta<ps $, we have that
\begin{equation*}
\begin{array}{lllc}
w_{n}\rightarrow w,\, \operatorname{in}\, L^{r}(\Omega,\frac{dx}{|x|^{\beta}}),\\
w_{n}(x)\rightarrow w(x), \, \operatorname{a.e.} \, x\in \Omega.
\end{array}
\end{equation*}
Furthermore, across simple arguments, we deduce
\begin{equation*}
\begin{array}{llllc}
w_{n}^{\pm}\rightharpoonup w^{\pm},\,\operatorname{in} \, W_{0}^{s,p}(\Omega),\\
w_{n}^{\pm}\rightarrow w^{\pm},\, \operatorname{in}\, L^{r}(\Omega,\frac{dx}{|x|^{\beta}}),\\
w_{n}^{\pm}(x)\rightarrow w^{\pm}(x), \, \operatorname{a.e.} \, x\in \Omega.
\end{array}
\end{equation*}

From Lemma \ref{Lem4}, since $ q<p_{\alpha}^{*} $, we have
\begin{equation*}
\int _{\Omega}\frac{|w^{\pm}|^{q}}{|x|^{\alpha}}dx=\liminf_{n\rightarrow \infty}\int _{\Omega}\frac{|w_{n}^{\pm}|^{q}}{|x|^{\alpha}}dx>0
\end{equation*}
which implies that $ w^{+}\neq 0 $ and $ w^{-}\neq 0 $. Also, from \hyperlink{M1}{$( M_{1}) $}, Fatou's Lemma and Lemma \ref{Lemm1}, it follows that 
\begin{equation*}
\langle I'_{\lambda}(w),w^{\pm}\rangle\leq \lim\limits_{n\rightarrow\infty}\langle I'_{\lambda}(w_{n}),w_{n}^{\pm}\rangle=0.
\end{equation*}
Then, Corollary \ref{Cor2} implies that there exists $ (t_{w},\theta_{w})\in (0,1]\times (0,1] $ such that $ t_{w}w^{+}+\theta_{w}w^{-}\in \mathcal{M}_{\lambda} $. Thus, by \eqref{R2} and \eqref{propf2}, we get
\begin{eqnarray}\label{Equa48}
c_{\mathcal{M}_{\lambda}}\leq I_{\lambda}(t_{w}w^{+}+\theta_{w}w^{-})&=&I_{\lambda}(t_{w}w^{+}+\theta_{w}w^{-})-\frac{1}{\gamma}\langle I'_{\lambda}(t_{w}w^{+}+\theta_{w}w^{-}),t_{w}w^{+}+\theta_{w}w^{-}\rangle\nonumber\\
&\leq &I_{\lambda}(w)-\frac{1}{\gamma}\langle I'_{\lambda}(w),w\rangle\\
&\leq&\lim_{n\rightarrow \infty}\left (I_{\lambda}(w_{n})-\frac{1}{\gamma}\langle I'_{\lambda}(w_{n}),w_{n}\rangle\right )\nonumber\\
&=&\lim_{n\rightarrow \infty} I_{\lambda}(w_{n})=c_{\mathcal{M}_{\lambda}}\nonumber
\end{eqnarray}
and when $ t_{w}<1 $ or $ \theta_{w}<1 $, inequality \eqref{Equa48} is strict. Therefore, $ t_{w}=\theta_{w}=1 $ which implies that $ w\in \mathcal{M}_{\lambda} $ with $ I_{\lambda}(w)=c_{\mathcal{M}_{\lambda}} $.

It remains to be shown that $ w $ is a critical point for $ I_{\lambda} $. Suppose, by contradiction, that $ I'_{\lambda}(w)\neq 0 $. Then, there exists $ v\in W_{0}^{s,p}(\Omega) $ (which by density can be taken in $ C_{c}^{\infty}(\Omega) $) such that $ \langle I'_{\lambda}(w),v\rangle<-1 $ and, by continuously, there exists $ \tau_{0}>0 $ sufficiently small, such that
\begin{equation}\label{Equa49}
\langle I'_{\lambda}(tw^{+}+\theta w^{-}+\tau v),v\rangle<-1, 
\end{equation}
whenever $ (t,\theta)\in \overline{B}((1,1),\tau_{0}) $ and $ |\tau| \leq\tau_{0}$. Now, by Lemma \ref{lem3}, 
\begin{equation}\label{Equa50}
\Psi_{w}(t,\theta)=\left (\left \langle I'_{\lambda}(tw^{+}+\theta w^{-}),tw^{+}\right \rangle,\left \langle I'_{\lambda}(tw^{+}+\theta w^{-}),\theta w^{-}\right \rangle\right )\neq (0,0),
\end{equation}
for all $ (t,\theta)\in \partial D $, where $ D=B((1,1),\tau_{0}) $. We consider $ h(\tau,t,\theta)=tw^{+}+\theta w^{-}+\tau v $ which satisfies $ h^{\pm}\neq 0 $ if $ (t,\theta)\in \overline{D} $ and $ 0\leq\tau\leq \tau_{1} $ with $ 0<\tau_{1}<\tau_{0} $ sufficiently small. We also consider the homotopy $ H:[0,\tau_{1}]\times \overline{D}\rightarrow \mathbb{R}^{2} $ defined by
\begin{equation*}
H_{\tau}(t,\theta):=\left (I'_{\lambda}(h(\tau,t,\theta)),h(\tau,t,\theta)^{+},I'_{\lambda}(h(\tau,t,\theta)),h(\tau,t,\theta)^{-}\right ).
\end{equation*}
Note that $ H_{0}=\Psi_{w} $ and by \eqref{Equa50}, $ \inf_{\partial D}|H_{0}|>0 $. Moreover, by uniform continuously, we obtain $ \inf_{\partial D} |H_{\tau}|>0$, for all $ 0\leq\tau \leq \tau_{1}$ with $ \tau_{1} $ sufficiently small. Thus, by Lemma \ref{Lem6}
\begin{equation*}
\deg(\Psi_{w},D,(0,0))= \operatorname{sgn}\det J_{(1,1)} \Psi_{w}=1
\end{equation*}
and from invariance under homotopy, we obtain
\begin{equation*}
\deg(H_{\tau_{1}},D,(0,0))=\deg(H_{0},D,(0,0))=1.
\end{equation*} 
Therefore, there exists $ (t_{1},\theta_{1})\in D $ such that $ H_{\tau_{1}}(t_{1},\theta_{1})=(0,0) $, namely, $ h(\tau_{1},t_{1},\theta_{1})\in \mathcal{M}_{\lambda} $. Applying \eqref{Equa49} and Lemma \ref{lem3}, we have
\begin{eqnarray*}
	c_{\mathcal{M}_{\lambda}}\leq I_{\lambda}(h(\tau_{1},t_{1},\theta_{1}))&=&I_{\lambda}(t_{1}w^{+}+\theta_{1}w^{-})+\int_{0}^{\tau_{1}}\langle I'_{\lambda}(t_{1}w^{+}+\theta_{1}w^{-}+tv),v\rangle dt\\
	&\leq&I_{\lambda}(t_{1}w^{+}+\theta_{1}w^{-})-\tau_{1}\\
	&\leq&I_{\lambda}(w)-\tau_{1}=c_{\mathcal{M}_{\lambda}}-\tau_{1}
\end{eqnarray*}
which is a contradiction. Hence, $ I'_{\lambda}(w)=0 $. Consequently, $ u_{\lambda,3}:=w $ is a least energy sign-changing solution for \eqref{Equa1} and by Remark \ref{Rem1}, we have that
\begin{equation*}
	I_{\lambda}(u_{\lambda,3})>I_{\lambda}(u_{\lambda,1})+I_{\lambda}(u_{\lambda,2}).
\end{equation*}  

\noindent \textbf{(2)} \emph{Signed solutions:} From Proposition \ref{Prop1} there exists $ \lambda^{*}>0 $ such that for all $ \lambda\geq \lambda^{*} $, we have
\begin{equation*}
	c_{\mathcal{N}_{\lambda}^{\pm}}<l_{1}.
\end{equation*}
Let $\{V_{\lambda}^{\pm}\} $ be obtained in Lemma \ref{Lem7}. Since $ c_{V_{\lambda}^{\pm}}\leq c_{\mathcal{N}_{\lambda}^{\pm}} $, by Proposition \ref{Prop3}, it follows that $ I_{\lambda} $ satisfies $ (PS)_{c_{V_{\lambda}^{\pm}}} $, for all $ \lambda \geq \lambda^{*} $. As in item (1), applying Lemma \ref{Lem7}, we get a positive solution $ u_{\lambda,1} $ and a negative one $ u_{\lambda,2} $ such that one of them is a ground state solution, for all $ \lambda\geq \lambda^{*} $.

\noindent\emph{Sign-changing solution:} Let $ \{q_{n}\} \subset (\mu,p_{\alpha}^{*}) $ such that $ q_{n}\rightarrow p_{\alpha}^{*} $ as $ n\rightarrow \infty $. By Proposition \ref{Prop1}, there exists $ \lambda^{*}>0 $ such that for all $ \lambda \geq \lambda^{*} $
\begin{equation}\label{Equa56}
c_{\mathcal{N}_{\lambda,p_{\alpha}^{*}}}<l_{1}
\end{equation} 
and
\begin{equation}\label{Equa57}
\limsup _{n\rightarrow \infty}c_{\mathcal{M}_{\lambda,q_{n}}}<c_{\mathcal{N}_{\lambda,p_{\alpha}^{*}}}+l_{1}.
\end{equation}
Fix $ \lambda\geq \lambda^{*} $. Then, from item (1) with $ q=q_{n} $, there exists a sequence $ \{w_{q_{n}}\}\subset \mathcal{M}_{\lambda,q_{n}} $ such that $ I_{\lambda,q_{n}}(w_{q_{n}})=c_{\mathcal{M}_{\lambda,q_{n}}} $ and $ I'_{\lambda,q_{n}}(w_{q_{n}})=0 $. By Lemma \ref{Lem4}, we have
\begin{equation*}
\left (\frac{1}{\gamma}-\frac{1}{\mu}\right )M(1)\min\{1,\|w_{q_{n}}\|^{\frac{\gamma-p}{p}}\}\|w_{q_{n}}\|^{p}\leq I_{\lambda,q_{n}}(w_{q_{n}})\leq C
\end{equation*}
which implies in the boundedness of $ \{w_{q_{n}}\} $. Thus, up to a subsequence, $ w_{q_{n}}\rightharpoonup w \in W_{0} ^{s,p}(\Omega)$ and
\begin{equation*}
\begin{array}{lllc}
w_{q_{n}}\rightarrow w,\, \operatorname{in}\, L^{r}(\Omega),\; r\in [1,p^{*})\\
w_{q_{n}}(x)\rightarrow w(x), \, \operatorname{a.e.} \, x\in \Omega,\\
\|w_{q_{n}}\|\rightarrow \tau_{0}\geq 0.
\end{array}
\end{equation*}
Since $ \{w_{q_{n}}^{\pm}\} $ is also bounded, an elementary argument shows that 
\begin{equation*}
\begin{array}{lllc}
w_{q_{n}}^{\pm}\rightharpoonup w^{\pm},\, \operatorname{in}\, W_{0}^{s,p}(\Omega),\\
w_{q_{n}}^{\pm}\rightarrow w^{\pm},\, \operatorname{in}\, L^{r}(\Omega),\\
w_{q_{n}}(x)\rightarrow w(x), \, \operatorname{a.e.} \, x\in \Omega.\\
\end{array}
\end{equation*}

Next we will prove that $ w_{n}\rightarrow w $ strongly in $ W_{0}^{s,p}(\Omega) $. Applying the concentration compactness principle to $ \{w_{q_{n}}\} $ and to both $ w_{q_{n}}^{\pm} $, we obtain for $ w $ measures $ \nu, \sigma $ and a collection at most countable $ \{x_{j}\}_{j\in \mathcal{J}}\subset \overline{\Omega} $ satisfying \eqref{Eq1}-\eqref{Eq4} and for $ w^{\pm} $, correspondents $ \nu^{\pm} $, $ \sigma^{\pm} $, $ \{x_{j}^{\pm}\}_{j\in \mathcal{J}^{\pm}} $.

Since
\begin{eqnarray*}
	c_{\mathcal{M}_{\lambda,q_{n}}}&=&I_{\lambda,q_{n}}(w_{q_{n}})-\frac{1}{\gamma}\langle I'_{\lambda,q_{n}}(w_{q_{n}}),w_{q_{n}}\rangle\\
	&=&\frac{1}{p}\widehat{M}(\|w_{q_{n}}\|^{p})-\frac{1}{\gamma}M(\|w_{q_{n}}\|^{p})\|w_{q_{n}}\|^{p}\\
	&&+\lambda \int _{\Omega}\frac{1}{\gamma}f(x,w_{q_{n}})w_{q_{n}}-F(x,w_{q_{n}})dx +\left (\frac{1}{\gamma}-\frac{1}{q_{n}}\right )\int _{\Omega}\frac{|w_{q_{n}}|^{q_{n}}}{|x|^{\alpha}}dx,
\end{eqnarray*}
from \eqref{R2} and \eqref{propf2}, we have
\begin{eqnarray}\label{Equa58}
\limsup_{n\rightarrow \infty}c_{\mathcal{M}_{\lambda,q_{n}}}&\geq& \frac{1}{p}\widehat{M}(\|w\|^{p})-\frac{1}{\gamma}M(\|w\|^{p})\|w\|^{p}\nonumber\\
&&+\lambda \int _{\Omega}\frac{1}{\gamma}f(x,w)w-F(x,w)dx+\left (\frac{1}{\gamma}-\frac{1}{p_{\alpha}^{*}}\right )(\nu ^{+}+\nu ^{-})(\mathbb{R}^{N}).
\end{eqnarray}

Note that $ w\neq 0 $. In fact, suppose that $ w=0 $, then $ \nu_{j}^{+}>0 $ for some $ j\in \mathcal{J}^{+} $ and $ \nu_{l}^{-}>0 $ for some $ l\in \mathcal{J}^{-} $ because otherwise, without loss of generality, say $ \nu_{j}^{+}=0 $ for all $ j\in \mathcal{J}^{+} $, by item (1) of Lemma \ref{Lem4}, we would have the contradiction
\begin{equation*}
M(\kappa^{p})\kappa^{p} \leq M(\kappa^{p})\lim_{n\rightarrow \infty}\|w_{q_{n}}^{+}\|^{p}\leq \lim_{n\rightarrow \infty} M(\|w_{q_{n}}\|^{p})\langle (-\Delta_{p})^{s}w_{q_{n}},w_{q_{n}}^{+}\rangle=\lim_{n\rightarrow \infty} \lambda \int_{\Omega}f(x,w_{q_{n}}^{+})w_{q_{n}}^{+}dx+\int _{\Omega}\frac{|w_{q_{n}}^{+}|^{q_{n}}}{|x|^{\alpha}}dx=0.
\end{equation*}
Thus, by \eqref{Equa58} and Lemma \ref{lem6}, we have
\begin{eqnarray*}
	\limsup_{n\rightarrow \infty} c_{\mathcal{M}_{\lambda,q_{n}}} &\geq& \left (\frac{1}{\gamma}-\frac{1}{p_{\alpha}^{*}}\right )\nu_{j}^{+}+\left (\frac{1}{\gamma}-\frac{1}{p_{\alpha}^{*}}\right )\nu_{l}^{-}\\
	&\geq&2l_{1}
\end{eqnarray*}
which, from \eqref{Equa56}, contradicts \eqref{Equa57}. Therefore, $ w\neq 0 $. Moreover, from Lemmas \ref{lem1} and \ref{lem2}, we have
\begin{equation*}
\langle I'_{\lambda,p_{\alpha}^{*}}(w),w\rangle\leq \liminf_{n\rightarrow \infty}\langle I'_{\lambda,q_{n}}(w_{q_{n}}),w\rangle =0
\end{equation*}
and, thus, there exists $ t_{w}\in (0,1] $ such that $ t_{w}w\in \mathcal{N}_{\lambda,p_{\alpha}^{*}} $. 

Now, we can prove that $ \nu_{j}^{+}=0 $, for all $ j\in \mathcal{J}^{+} $ and $ \nu_{j}^{-}=0 $ for all $ j\in \mathcal{J}^{-} $. Supposing not, by \eqref{Equa58}, \eqref{M'a2}, \eqref{propf2} and Lemma \ref{lem6}, it follows that

\begin{eqnarray*}
	\limsup_{n\rightarrow \infty}c_{\mathcal{M}_{\lambda,q_{n}}}&\geq& I_{\lambda,p_{\alpha}^{*}}(t_{w}w)-\frac{1}{\gamma}\langle I'_{\lambda,p_{\alpha}^{*}}(t_{w}w),t_{w}w\rangle+ l_{1}\\
	&\geq& I_{\lambda,p_{\alpha}^{*}}(t_{w}w)+l_{1}\\
	&\geq& c_{\mathcal{N}_{\lambda,p_{\alpha}^{*}}}+l_{1}
\end{eqnarray*}
contradicting \eqref{Equa57}. Therefore, $ \nu_{j}^{+}=0 $, for all $ j\in \mathcal{J}^{+} $ and $ \nu_{j}^{-}=0 $ for all $ j\in \mathcal{J}^{-} $ which implies
\begin{equation*}
\int _{\Omega}\frac{|w_{q_{n}}^{\pm}|^{q_{n}}}{|x|^{\alpha}}dx \rightarrow \int_{\Omega}\frac{|w^{\pm}|^{p_{\alpha}^{*}}}{|x|^{\alpha}}dx.
\end{equation*}
Since $ \langle I'_{\lambda,q_{n}}(w_{q_{n}}),w_{q_{n}}-w\rangle=0 $, follows that $ \langle (-\Delta_{p})^{s}w_{q_{n}},w_{q_{n}}-w\rangle=o_{n}(1) $. Then, by Lemma \ref{lem1} and standard arguments, we conclude that $ w_{q_{n}}\rightarrow w $ in $ W_{0}^{s,p}(\Omega) $. Thus, $ w_{q_{n}}^{\pm}\rightarrow w^{\pm} $ in $ W_{0}^{s,p}(\Omega) $ and from \eqref{Equa59} with $ q=q_{n} $, we obtain
\begin{equation*}
\|w^{\pm}\|=\lim_{n\rightarrow \infty}\|w_{q_{n}}^{\pm}\|>0.
\end{equation*}
Therefore, by Lemma \ref{lem2}, we conclude that $ I'_{\lambda,p_{\alpha}^{*}}(w)=\lim_{n\rightarrow \infty}I'_{\lambda,q_{n}}(w_{q_{n}})=0 $ and $ w\in \mathcal{M}_{\lambda,p_{\alpha}^{*}} $. 

It remains to prove that $ w $ has minimum energy. Given $ v\in \mathcal{M}_{\lambda,p_{\alpha}^{*}} $, by Lemma \ref{lem3},  there exist $ t_{n},\theta_{n}>0 $ such that 
\begin{equation*}
v_{n}=t_{n}v^{+}+\theta_{n} v^{-}\in \mathcal{M}_{\lambda,q_{n}}.
\end{equation*}
Then, proceeding as in the proof of item (2) of Proposition \ref{Prop1}, we conclude that $ (t_{n},\theta_{n})\rightarrow (1,1) $ and since
\begin{eqnarray*}
	I_{\lambda,q_{n}}(v_{n})&=&\frac{1}{p}\widehat{M}(\|v_{n}\|^{p})-\frac{1}{\mu}M(\|v_{n}\|^{p})\|v_{n}\|^{p}+\int _{\Omega}\frac{1}{\mu}f(x,v_{n})v_{n}-F(x,v_{n})dx\\
	&&+\left (\frac{1}{\mu}-\frac{1}{q_{n}}\right )t_{n}^{q_{n}}\int _{\Omega}\frac{|v^{+}|^{q_{n}}}{|x|^{\alpha}}dx+\left (\frac{1}{\mu}-\frac{1}{q_{n}}\right )\theta_{n}^{q_{n}}\int _{\Omega}\frac{|v^{-}|^{q_{n}}}{|x|^{\alpha}}dx,
\end{eqnarray*}
from Dominated Convergence Theorem, it follows that $ I_{\lambda,q_{n}}(v_{n})\rightarrow I_{\lambda,p_{\alpha}^{*}}(v) $. Note that, by construction, $ I_{\lambda,q_{n}}(w_{q_{n}})\leq I_{\lambda,q_{n}}(v_{n}) $ which implies
\begin{equation*}
I_{\lambda,p_{\alpha}^{*}}(w)=\lim_{n\rightarrow \infty}I_{\lambda,q_{n}}(w_{q_{n}})\leq \lim_{n\rightarrow \infty}I_{\lambda,q_{n}}(v_{n})=I_{\lambda,p_{\alpha}^{*}}(v)
\end{equation*}
and, therefore, $ I_{\lambda,p_{\alpha}^{*}}(w)=c_{\mathcal{M}_{\lambda,p_{\alpha}^{*}}} $. Taking $ u_{\lambda,3}:=w $, we obtain the third solution as intended, for all $ \lambda \geq \lambda^{*} $.\hfill $\square$

\subsection{Proof of Theorem \ref{Theo21}}
 
 Replacing the technical results obtained to functional $ I_{\lambda} $ in subsection \ref{Subdeg} by the corresponding ones obtained to functional $ I_{\lambda,a} $ in subsection \ref{Subnondeg}, we can follow the same passes used in the proof of Theorem \ref{Theo11} to prove this result. \hfill $\square$

\subsection{Proof of Theorem \ref{Theo1}}

\noindent \textbf{(1)} The proof of this item is analogous to proof of item (1) of Theorem \ref{Theo11}. In fact, since $ M $ is non-degenerate, it is not necessary to use the property \eqref{R3} and, therefore, condition \hyperlink{1}{$ (f_{1}) $} can be substituted by \hyperlink{F1}{$ (F_{1}) $}.

\noindent \textbf{(2)} From item (1) of Theorem \ref{Theo21}, we obtain three solutions for problem \eqref{Equa40}, $ u_{\lambda,a,1} $, $ u_{\lambda,a,2} $ and $ u_{\lambda,a,3} $ for which \hyperlink{C2}{$ ( \mathcal{C}_{\lambda,a} ) $} holds.
By Proposition \ref{Prop21}, there exists $ \lambda^{*}>0 $ such that for all $ \lambda\geq \lambda^{*} $,
\begin{equation*}
	c_{\mathcal{N}_{\lambda,a}^{\pm}}, c_{\mathcal{M}_{\lambda,a}}\leq \delta \left (\frac{m_{0}}{p}-\frac{a}{\mu}\right ).
\end{equation*}
Thus, from Lemma \ref{Lem21}, we obtain $ \|u_{\lambda,a,i}\|^{p}\leq \delta $, $ i=1,2,3 $. By \eqref{Ma2}, it follows that $ M_{a}(\|u_{\lambda,a,i}\|^{p})=M(\|u_{\lambda,a,i}\|^{p}) $, $ i=1,2,3 $. Therefore, for all $ \lambda\geq \lambda^{*} $, $ u_{\lambda,i}:=u_{\lambda,a,i} $, $ i=1,2,3 $, are solutions of problem \eqref{Equa1}, for which \hyperlink{C}{$ (\mathcal{C}_{\lambda}) $} holds. 

\noindent \textbf{(3)} From item (2) of Theorem \ref{Theo21}, there exist $ \overline{\lambda}>0 $ such that for all $ \lambda\geq \overline{\lambda} $, problem \eqref{Equa40} has three solutions $ u_{\lambda,a,1} $, $ u_{\lambda,a,2} $ and $ u_{\lambda,a,3} $ for which \hyperlink{C2}{$ ( \mathcal{C}_{\lambda,a} ) $} holds. Also, from Proposition \ref{Prop21}, there exists $ \lambda^{**}\geq \overline{\lambda} $ such that for all $ \lambda\geq \lambda^{**} $, 
\begin{equation*}
	c_{\mathcal{N}_{\lambda,a}^{\pm}}, c_{\mathcal{M}_{\lambda,a}}\leq \delta \left (\frac{m_{0}}{p}-\frac{a}{\mu}\right ).
\end{equation*}
As in item (2) of this theorem, we can conclude that $ u_{\lambda,i}:=u_{\lambda,a,i} $, $ i=1,2,3 $, are solutions of \eqref{Equa1} for which \hyperlink{C}{$ (\mathcal{C}_{\lambda}) $} holds, for all $ \lambda\geq \lambda^{**} $.\hfill $\square$

\bibliographystyle{amsplain}

\end{document}